\newcommand{\bydef}{:=}
\DeclareMathOperator{\CD}{\mathfrak{CD}}
\DeclareMathOperator{\Cl}{\mathfrak{Cl}}
\newcommand{\bi}{\mathbf{i}}
\newcommand{\sym}{\mathcal{H}}
\newcommand{\sg}{\mathrm{Sym}}
\newcommand{\cA}{\mathcal{A}}
\newcommand{\calA}{\mathcal{A}}
\newcommand{\cB}{{\mathcal B}}
\newcommand{\calB}{{\mathcal B}}
\newcommand{\cC}{\mathcal{C}}
\newcommand{\cQ}{\mathcal{Q}}
\newcommand{\cK}{\mathcal{K}}
\newcommand{\calC}{\mathcal{C}}
\newcommand{\calI}{\mathcal{I}}
\newcommand{\cL}{\mathcal{L}}
\newcommand{\cR}{\mathcal{R}}
\newcommand{\cU}{\mathcal{U}}
\newcommand{\cV}{\mathcal{V}}
\newcommand{\calV}{\mathcal{V}}
\newcommand{\calK}{{\mathcal K}}
\newcommand{\calH}{{\mathcal H}}
\newcommand{\calS}{{\mathcal S}}
\newcommand{\W}{W} 
\newcommand{\id}{{\mathrm{id}}} 
\newcommand{\frg}{{\mathfrak g}}
\newcommand{\frh}{{\mathfrak h}}
\newcommand{\frf}{{\mathfrak f}}
\newcommand{\wt}[1]{\widetilde{#1}}
\newcommand{\wb}[1]{\overline{#1}}
\newcommand{\vphi}{\varphi}
\newcommand{\espan}[1]{\mathrm{span}\left\{#1\right\}}
\DeclareMathOperator*{\ot}{\otimes}
\newcommand{\trace}{\mathrm{trace}}
\newcommand{\ZZ}{\mathbb{Z}}
\newcommand{\bZ}{{\mathbb Z}}
\newcommand{\bN}{{\mathbb N}}
\newcommand{\bR}{{\mathbb R}}
\newcommand{\bQ}{{\mathbb Q}}
\newcommand{\bC}{{\mathbb C}}
\newcommand{\bH}{{\mathbb H}}
\newcommand{\bO}{{\mathbb O}}
\newcommand{\FF}{\mathbb{F}}
\newcommand{\bF}{{\mathbb F}}
\newcommand{\bFalg}{{\overline{\bF}}}
\newcommand{\chr}[1]{\mathrm{char}\,#1}
\DeclareMathOperator{\Hom}{\mathrm{Hom}}
\DeclareMathOperator{\End}{\mathrm{End}}
\DeclareMathOperator{\Alg}{\mathrm{Alg}}
\DeclareMathOperator{\Aut}{\mathrm{Aut}}
\DeclareMathOperator{\Stab}{\mathrm{Stab}}
\DeclareMathOperator{\Diag}{\mathrm{Diag}}
\DeclareMathOperator{\AAut}{\mathbf{Aut}}
\DeclareMathOperator{\bAut}{\mathbf{Aut}}
\DeclareMathOperator{\Der}{\mathrm{Der}}
\DeclareMathOperator{\supp}{\mathrm{Supp}\,}
\DeclareMathOperator{\Supp}{\mathrm{Supp}}
\newcommand{\Lie}{\mathbf{Lie}}
\newcommand{\ad}[1]{\mathrm{ad}\,#1}
\newcommand{\Ad}[1]{\mathrm{Ad}\,#1}
\newcommand{\Gl}{\mathfrak{gl}}
\newcommand{\frsl}{{\mathfrak{sl}}}
\newcommand{\frso}{{\mathfrak{so}}}
\newcommand{\tri}{\mathfrak{tri}}
\newcommand{\GL}{\mathrm{GL}}
\newcommand{\Ort}{\mathrm{O}}
\newcommand{\SOrt}{\mathrm{SO}}
\newcommand{\Spin}{\mathrm{Spin}}
\newcommand{\Gs}{\mathbf{G}}
\newcommand{\GLs}{\mathbf{GL}}
\newcommand{\Diags}{\mathbf{Diag}}
\newcommand{\Qs}{\mathbf{Q}}
\newcommand{\Stabs}{\mathbf{Stab}}
\DeclareMathOperator{\degree}{deg}
\newcommand{\subo}{_{\bar 0}}
\newcommand{\subuno}{_{\bar 1}}
\newtheorem{theorem}{Theorem}[section]
\newtheorem{proposition}[theorem]{Proposition}
\newtheorem{lemma}[theorem]{Lemma}
\newtheorem{corollary}[theorem]{Corollary}
\theoremstyle{definition}
\newtheorem{df}[theorem]{Definition}
\theoremstyle{remark}
\newtheorem{remark}[theorem]{Remark}
\def\hregleta{\hrule height .5pt}
\def\hreglon{\hrule height1pt}
\def\vreglon{\vrule height 12pt width1pt depth 4pt}
\def\vregleta{\vrule width .5pt}
\def\hreglonfill{\leaders\hreglon\hfill}
\def\hregletafill{\leaders\hregleta\hfill}
\begin{document}

\title[Gradings on the exceptional Lie algebras $F_4$ and $G_2$ revisited]{Gradings on the exceptional Lie algebras\\ $F_4$ and $G_2$ revisited}

\author[Alberto Elduque]{Alberto Elduque$^{\star}$}
\thanks{$^{\star}$ Supported by the Spanish Ministerio de Educaci\'{o}n y Ciencia and
FEDER (MTM 2007-67884-C04-02) and by the Diputaci\'on General de Arag\'on (Grupo de Investigaci\'on de \'Algebra)}
\address{Departamento de Matem\'aticas e Instituto Universitario de Matem\'aticas y Aplicaciones,
Universidad de Zaragoza, 50009 Zaragoza, Spain}
\email{elduque@unizar.es}

\author[Mikhail Kochetov]{Mikhail Kochetov$^{\star\star}$}
\thanks{$^{\star\star}$Supported by the Natural Sciences and Engineering Research Council (NSERC) of Canada, Discovery Grant \# 341792-07.}
\address{Department of Mathematics and Statistics,
Memorial University of Newfoundland, St. John's, NL, A1C5S7, Canada}
\email{mikhail@mun.ca}


\subjclass[2000]{Primary 17B70, secondary 17D05, 17C40.}

\keywords{Graded algebra, fine grading, Weyl group, octonions, Albert algebra}

\begin{abstract}
All gradings by abelian groups are classified on the following algebras over an algebraically closed field $\FF$: the simple Lie algebra of type $G_2$ ($\chr{\FF}\ne 2,3$), the exceptional simple Jordan algebra ($\chr{\FF}\ne 2$), and the simple Lie algebra of type $F_4$ ($\chr{\FF}\ne 2$).
\end{abstract}

\maketitle


\section{Introduction}\label{se:introduction}

Gradings on Lie algebras have been extensively used since the beginning of Lie theory: the Cartan grading on a complex semisimple Lie algebra is the $\bZ^r$-grading ($r$ being the rank) whose homogeneous components are the root spaces relative to a Cartan subalgebra (which is the zero component); symmetric spaces are related to $\bZ_2$-gradings, Kac--Moody Lie algebras to gradings by a finite cyclic group, the theory of Jordan algebras and pairs to $3$-gradings on Lie algebras, etc.

In 1989, a systematic study of gradings on Lie algebras was started by Patera and Zassenhaus \cite{PateraZassenhaus}. Fine gradings (i.e., those that cannot be refined) on the classical simple complex Lie algebras other than $D_4$ by arbitrary abelian groups were considered in \cite{HPP}. The arguments there are computational and the problem of classification of fine gradings is not completely settled. The complete classification, up to equivalence, of fine gradings on all classical simple Lie algebras (including $D_4$) over algebraically closed fields of characteristic $0$ has recently been obtained in \cite{ElduqueFine}. For any abelian group $G$, the classification of all $G$-gradings, up to isomorphism, on the classical simple Lie algebras other than $D_4$ over algebraically closed fields of characteristic different from $2$ has been achieved in \cite{YuriMishaClassical} using methods developed in \cite{BSZ,BZ02,BZ03,BShZ,BZ06,BZ07,BKM}.

As to the exceptional simple Lie algebras, the classification of all gradings (up to equivalence) for type $G_2$ over an algebraically closed field of characteristic $0$ was obtained independently in \cite{CristinaCandidoG2} and \cite{BahturinTvalavadzeG2}, using the results on gradings on the Cayley algebras in \cite{ElduqueOctonions}. Also, the classification of fine gradings (up to equivalence) for type $F_4$ over an algebraically closed field of characteristic $0$ has recently been obtained in \cite{CristinaCandidoF4} (see also \cite{CristinaF4}). The method used in that work relies on the fact that, under the stated assumptions on the ground field, any abelian group grading on an algebra is  the decomposition into common eigenspaces for some diagonalizable subgroup of the automorphism group of the algebra. It is shown that any such subgroup is contained in the normalizer of a maximal torus of the automorphism group. Starting from this point, the argument is quite technical, and some computer-aided case-by-case analysis is used. Since the automorphism groups of the simple Lie algebra of type $F_4$ and of the exceptional simple Jordan algebra (the Albert algebra) are isomorphic, in \cite{CristinaCandidoF4} the fine gradings on the Albert algebra are computed as well. These methods are being currently used by C.~Draper and C.~Mart\'{\i}n-Gonz\'alez to study gradings on the simple Lie algebra of type $E_6$.

The purpose of this paper is the classification of gradings on the simple Lie algebras of types $G_2$ and $F_4$ over algebraically closed fields of characteristic different from $2$ (and different from $3$ for type $G_2$, as there is no simple Lie algebra of type $G_2$ in characteristic $3$). Actually, for $G_2$ the situation is simple enough to obtain a description of gradings without assuming the ground field algebraically closed. Our arguments will differ essentially from the arguments in \cite{CristinaCandidoG2,BahturinTvalavadzeG2,CristinaCandidoF4}, which depend heavily on the characteristic being $0$. The idea is to classify gradings on the Cayley algebra and on the Albert algebra first, and then use automorphism group schemes to transfer the classification to the corresponding Lie algebras. All gradings on the Cayley algebras over an arbitrary field were described in \cite{ElduqueOctonions}, using, essentially, only the properties of the norm and trace. All gradings on the Albert algebra over an algebraically closed field of characteristic different from $2$ will be described here, using the well-known properties of this exceptional Jordan algebra. In this way, not only the results on the gradings on the Albert algebra in \cite{CristinaCandidoF4} will be extended to positive characteristic, but also the gradings will be described intrinsically, according to structural properties of the Albert algebra and the identity component of the grading. In particular, we obtain an interesting model of the Albert algebra based on the fine $\ZZ\times\ZZ^3_2$-grading and the Cayley algebra and another model based on the fine $\ZZ_3^3$-grading and the Okubo algebra --- see \eqref{eq:nu_model} and \eqref{eq:AlbertOkubo}, respectively. Once this is done, general arguments with morphisms of affine group schemes (already used in \cite{YuriMishaClassical}) will be applied to show that any grading on the simple Lie algebra of type $G_2$ or $F_4$ is induced from a grading on the Cayley or the Albert algebra, respectively. Our desire to cover characteristic $3$ for type $F_4$ has forced us to extend some classical results which, to the best of our knowledge, have appeared in the literature only assuming characteristic different from $2$ and $3$ (see Propositions \ref{pr:innerderivations} and \ref{pr:AlbertAd}).

In Section \ref{se:gradings}, we collect the basic definitions and properties related to gradings, including their relationship with automorphism group schemes. Section \ref{se:Cayley} is devoted to a review of the description of gradings on the Cayley algebras in \cite{ElduqueOctonions} in a way suitable for our purposes; we also obtain, for any abelian group $G$, a classification of $G$-gradings up to isomorphism (over an algebraically closed field). These results are applied in Section \ref{se:G2} to describe all gradings on central simple Lie algebras of type $G_2$ over an arbitrary field of characteristic different from $2$ and $3$, and to classify the gradings up to equivalence and up to isomorphism, assuming the field algebraically closed. Then, in Section \ref{se:Albert}, the Albert algebra is described, and some subgroups of its automorphism group are considered. Section \ref{se:AlbertFine} gives constructions of four fine gradings on the Albert algebra over an algebraically closed field of characteristic different from $2$ (one of them does not exist in characteristic $3$). In Section \ref{se:classification}, these gradings are shown to exhaust the list of fine gradings, up to equivalence. We also obtain, for any abelian group $G$, a classification of $G$-gradings up to isomorphism. Finally, in Section \ref{se:F4}, all gradings on the simple Lie algebra of type $F_4$ are classified under the same assumptions on the ground field.


\section{Gradings}\label{se:gradings}

In this section, we state some basic definitions and facts concerning gradings on (nonassociative) algebras. We also fix the notation that will be used throughout the paper. The reader may consult \cite{Ksur} for a survey of results on gradings on Lie algebras.

\subsection{Some definitions}\null\quad

Let $\cA$ be an algebra over a ground field $\bF$. A \emph{grading} on $\cA$ is a decomposition
\[
\Gamma: \cA=\bigoplus_{s\in S}\cA_s
\]
of $\cA$ into a direct sum of subspaces, called the {\em homogeneous components}, such that for any $s_1,s_2\in S$ there exists $s_3\in S$ with
$\cA_{s_1}\cA_{s_2}\subset \cA_{s_3}$. If $a\in\cA_s$, we will say that $a$ is \emph{homogeneous of degree} $s$ and write $\deg a=s$.

Then:

$\bullet$ If $\cA$ is finite-dimensional, let $n_i$ be the number of homogeneous components of dimension $i$, $i=1,\ldots,r$, where $r$ is the highest dimension that occurs. (Hence $\dim \cA=\sum_{i=1}^r in_i$.) The \emph{type} of $\Gamma$ is the sequence $(n_1,n_2,\ldots,n_r)$.

$\bullet$ Two gradings $\Gamma: \cA=\bigoplus_{s\in S}\cA_s$ and $\Gamma':\cA'=\bigoplus_{s'\in S'}\cA'_{s'}$ are said to be \emph{equivalent} if there exist an isomorphism $\psi\colon\cA\rightarrow \cA'$ and a bijection $\alpha\colon S\to S'$ such that for any $s\in S$ we have $\psi(\cA_s)=\cA'_{\alpha(s)}$.

$\bullet$ Let $\Gamma$ and $\Gamma'$ be two gradings on $\cA$. The grading $\Gamma$ is said to be a \emph{refinement} of $\Gamma'$ (or $\Gamma'$ a \emph{coarsening} of $\Gamma$) if, for any $s\in S$, there exists $s'\in S'$ such that $\cA_s\subset \cA_{s'}$. In other words, each homogeneous component of $\Gamma'$ is a (direct) sum of some homogeneous components of $\Gamma$. A grading is called \emph{fine} if it admits no proper refinement.

$\bullet$ The grading $\Gamma$ is said to be a \emph{group grading} (respectively, an \emph{abelian group grading}) if there is a group (respectively, abelian group) $G$ containing $S$ such that, for all $s_1,s_2\in S$, we have $\cA_{s_1}\cA_{s_2}\subset \cA_{s_1s_2}$, with the multiplication of $s_1$ and $s_2$ in $G$. Setting $\cA_g\bydef 0$ if $g\notin S$, we have
\[
\Gamma: \cA=\bigoplus_{g\in G}\cA_g\quad\mbox{where}\quad\cA_g\cA_h\subset\cA_{gh}\quad\mbox{for all}\quad g,h\in G.
\]
This is what is called a \emph{$G$-grading} on $\cA$. A group grading (respectively, abelian group grading) is said to be \emph{fine} if it admits no proper refinement in the class of group gradings (respectively, abelian group gradings). We will also consider $G$-gradings on a vector space $V$, which are just direct sum decompositions of the form $V=\bigoplus_{g\in G}V_g$.

$\bullet$ Given a $G$-grading $\Gamma:V=\bigoplus_{g\in G}V_g$, the subset $\{g\in G\;|\;\cA_g\ne 0\}$ of $G$ will be called the \emph{support} of $\Gamma$ and denoted by $\Supp\Gamma$ (or $\Supp V$ if the grading is clear from the context). A subspace $W\subset V$ is said to be {\em graded} if $W=\bigoplus_{g\in G}W_g$ where $W_g=V_g\cap W$. Then we can speak of the support of $W$.

$\bullet$ Given a grading $\Gamma:\cA=\bigoplus_{s\in S}\cA_s$, we define the (abelian) group $G_0$ generated by $\{s\in S\;|\;\cA_s\ne 0\}$ subject only to the relations $s_1s_2=s_3$ whenever $0\ne \cA_{s_1}\cA_{s_2}\subset \cA_{s_3}$. Then we obtain a $G$-grading: $\cA=\bigoplus_{g\in G_0}\cA_g$ where $\cA_g$ is the sum of the homogeneous components $\cA_s$ such that the class of $s$ in $G$ is $g$. In general, this is a coarsening of $\Gamma$. If $\Gamma$ is a group grading (respectively, an abelian group grading), then $S$ imbeds in $G_0$ and the grading $\cA=\bigoplus_{g\in G_0}\cA_g$ coincides with $\Gamma$.
The group $G_0$ has the following universal property: given any (abelian) group grading $\cA=\bigoplus_{h\in H}\cA_h$ that is a coarsening of $\Gamma$, there exists a unique homomorphism of groups $\alpha\colon G_0\rightarrow H$  such that $\cA_h=\bigoplus_{g\in \alpha^{-1}(h)}\cA_g$. The group $G$ is called the \emph{universal (abelian) group} of $\Gamma$ and denoted $U(\Gamma)$. The universal (abelian) groups of two equivalent gradings are isomorphic.

$\bullet$ Given a $G$-grading $\Gamma:\cA=\bigoplus_{g\in G}\cA_g$ and a group homomorphism $\alpha\colon G\rightarrow H$, we obtain an $H$-grading $\cA=\bigoplus_{h\in H}\cA_h$ where  $\cA_h=\bigoplus_{g\in \alpha^{-1}(h)}\cA_g$. This $H$-grading will be denoted by ${}^\alpha\Gamma$ and said to be \emph{induced by $\alpha$ from $\Gamma$}. Clearly, ${}^\alpha\Gamma$ is coarsening of $\Gamma$ (not necessarily proper).

$\bullet$ Two $G$-gradings over the same group, $\Gamma: \cA=\bigoplus_{g\in G}\cA_g$ and $\Gamma': \cA'=\bigoplus_{g\in G}\cA'_g$, are said to be \emph{isomorphic} if there is  an isomorphism $\psi\colon\cA\rightarrow \cA'$ such that $\psi(\cA_g)=\cA'_g$ for all $g\in G$. A $G$-grading $\Gamma: \cA=\oplus_{g\in G}\cA_g$ and an $H$-grading $\Gamma': \cA'=\oplus_{h\in H}\cA'_{h}$ are said to be \emph{weakly isomorphic} if there are isomorphisms $\alpha\colon G\rightarrow H$ and $\psi\colon\cA\rightarrow\cA'$  such that, for all $g\in G$, we have $\psi(\cA_g)=\cA'_{\alpha(g)}$. This is equivalent to saying that $\Gamma'$ is isomorphic to ${}^\alpha\Gamma$.
It is clear that weakly isomorphic gradings are equivalent, but the converse does not hold in general. However, two equivalent (abelian) group gradings are weakly isomorphic when considered as gradings by their universal (abelian) groups.

$\bullet$ The {\em automorphism group} of $\Gamma$, denoted $\Aut(\Gamma)$, consists of all self-equivalences of $\Gamma$, i.e., automorphisms of $\cA$ that permute the components of $\Gamma$.
The {\em stabilizer} of $\Gamma$, denoted $\Stab(\Gamma)$,
consists of all automorphisms of the graded algebra $\cA$, i.e., automorphisms of $\cA$ that leave each component of $\Gamma$ invariant. The {\em diagonal group} of $\Gamma$, denoted $\Diag(\Gamma)$, is the subgroup of the stabilizer consisting of all automorphisms $\varphi$ such that the restriction of $\varphi$ to any homogeneous component of $\Gamma$ is the multiplication by a (nonzero) scalar. The quotient group $\Aut(\Gamma)/\Stab(\Gamma)$, which is a subgroup of $\sg(\Supp\Gamma)$, will be called the {\em Weyl group of $\Gamma$} and denoted by $\W(\Gamma)$. Each element of $\W(\Gamma)$ extends to a unique automorphism of $U(\Gamma)$, so $\W(\Gamma)$ can be regarded as a subgroup of $\Aut(U(\Gamma))$. For example, suppose $\cA$ is a finite-dimensional algebra over an algebraically closed field and $T$ is a maximal torus in the algebraic group $\Aut(\cA)$. Then the eigenspace decomposition $\Gamma$ of $\cA$ relative to $T$ is a $\mathfrak{X}(T)$-grading on $\cA$ where $\mathfrak{X}(T)$ is the group of regular characters of $T$. Let $N(T)$ be the normalizer of $T$ in $\Aut(\cA)$ and let $C(T)$ be the centralizer. It is easy to see that $T$ is the connected component of $\Diag(\Gamma)$, $\Aut(\Gamma)$ is $N(T)$, and $\Stab(\Gamma)$ is $C(T)$. Hence $\W(\Gamma)$ is $\W(T)\bydef N(T)/C(T)$. This justifies our use of the term ``Weyl group'' for $\W(\Gamma)$.

\smallskip

Unless stated otherwise, the term {\em grading} in this paper will always refer to an {\em abelian group grading}, and {\em universal group} to \emph{universal abelian group}.

\subsection{Gradings and automorphism group schemes}\null\quad

For background on group schemes the reader may consult \cite{Waterhouse} or \cite[Chapter VI]{KMRT}.

It is well-known that a $G$-grading $\Gamma$ on a vector space $V$ is equivalent to a comodule structure $\rho_\Gamma\colon V\to V\ot\FF G$, which is defined by setting $\rho_\Gamma(v)=v\ot g$ for all $v\in V_g$ and $g\in G$. Since $G$ is abelian, the Hopf algebra $\FF G$ is commutative and thus represents an affine group scheme, which we denote by $G^D$. Affine group schemes of this form are called {\em diagonalizable}. $G$ can be identified with the \emph{group of characters} of $G^D$, i.e., morphisms from $G^D$ to $\GLs_1$. If $V$ is finite-dimensional, then $\rho_\Gamma$ is equivalent to a morphism $\eta_\Gamma\colon G^D\to\GLs(V)$, i.e., a linear representation of $G^D$ on $V$. If we pick a homogeneous basis $\{v_1,\ldots,v_n\}$ in $V$, $\deg_\Gamma(v_i)=g_i$, then the comorphism of representing objects $\eta^*_\Gamma\colon\FF[X_{ij},\det(X_{ij})^{-1}]\to\FF G$ can be written explicitly as follows: $X_{ij}\mapsto\delta_{ij}g_i$, $i,j=1,\ldots,n$. In particular, $\eta_\Gamma$ is a closed imbedding if and only if $\eta^*_\Gamma$ is onto if and only if $\supp\Gamma$ generates $G$.

If $\cA$ is a finite-dimensional (nonassociative) algebra, then the \emph{automorphism group scheme} $\AAut(\cA)$ is defined as follows. For any unital commutative associative $\FF$-algebra $\cR$, the tensor product $\cA\ot\cR$ is an $\cR$-algebra, and we set
\[
\AAut(\cR)\bydef\Aut_\cR(\cA\ot\cR).
\]
Equivalently, $\AAut(\cA)$ is the subgroupscheme $\Stabs_{\GLs(\cA)}(\mu)$ where $\mu\colon\cA\ot\cA\to\cA$ is the multiplication map, which is to be regarded as an element of $\Hom(\cA\ot\cA,\cA)$ where $\GLs(\cA)$ acts in the standard way.

If $\Gamma$ is a $G$-grading on an algebra $\cA$, then the multiplication map $\mu\colon\cA\ot\cA\to\cA$ is a morphism of $G^D$-representations, which is equivalent to saying that $G^D$ stabilizes $\mu$, or that the image of $\eta_\Gamma\colon G^D\to\GLs(\cA)$ is a subgroupscheme of $\AAut(\cA)$.
Conversely, a morphism $\eta\colon G^D\to\AAut(\cA)$ gives rise to a $G$-grading $\Gamma$ on the algebra $\cA$ such that $\eta_\Gamma=\eta$.
For any unital commutative associative $\FF$-algebra $\cR$, the action of $\cR$-points of $G^D$ by automorphisms of the $\cR$-algebra $\cA\ot\cR$ can be written explicitly:
\begin{equation}\label{action_group_scheme}
(\eta_\Gamma)_\cR(f)(x\ot r)=x\ot f(g)r\quad\mbox{for all}\quad x\in\cA_g,\,r\in\cR,\,g\in G,\,f\in\Alg(\FF G,\cR).
\end{equation}

A group homomorphism $\alpha\colon G\to H$ gives rise to a morphism $\alpha^D\colon H^D\to G^D$. Then $\rho_{{}^\alpha\!\Gamma}=(\id\ot\alpha)\circ\rho_\Gamma$ implies that $\eta_{\,{}^\alpha\!\Gamma}=\eta_\Gamma\circ\alpha^D$.

Now if $\cB$ is another algebra and we have a morphism $\theta\colon\AAut(\cA)\to\AAut(\cB)$, then any $G$-grading $\Gamma$ on $\cA$ induces a $G$-grading on $\cB$ via the morphism $\theta\circ\eta_\Gamma\colon G^D\to\AAut(\cB)$. We will denote the induced grading by $\theta(\Gamma)$. Clearly, $\theta({}^\alpha\Gamma)={}^\alpha(\theta(\Gamma))$.

The group $\Aut(\cA)$ of the $\FF$-points of $\AAut(\cA)$ acts by automorphisms of $\AAut(\cA)$ via conjugation. Namely, $\vphi\in\Aut(\cA)$ defines a morphism $\Ad_\vphi\colon\AAut(\cA)\to\AAut(\cA)$ as follows:
\begin{equation}\label{action_aut_conj}
(\Ad_\vphi)_\cR(f)\bydef(\vphi\ot\id)\circ f\circ(\vphi^{-1}\ot\id)\quad\mbox{for all}\quad f\in\Aut_\cR(\cA\ot\cR).
\end{equation}
Comparing \eqref{action_group_scheme} and \eqref{action_aut_conj}, we see that $\Ad_\vphi(\Gamma)$ is the grading $\cA=\bigoplus_{g\in G}\vphi(\cA_g)$.
To summarize:

\begin{proposition}\label{duality_char_any}
The $G$-gradings on $\cA$ are in one-to-one correspondence with the morphisms of affine group schemes $G^D\to\AAut(\cA)$. Two $G$-gradings are isomorphic if and only if the corresponding morphisms are conjugate by an element of $\Aut(\cA)$. The weak isomorphism classes of gradings on $\cA$ with the property that the support generates the grading group are in one-to-one correspondence with the $\Aut(\cA)$-orbits of diagonalizable subgroupschemes in $\AAut(\cA)$.\hfill{$\square$}
\end{proposition}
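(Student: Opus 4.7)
The plan is to assemble the three claims from the observations already made in the preceding paragraphs, with the third (weak isomorphism) requiring the most care because of the hypothesis that the support generates the group.

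For the first claim, the discussion above already shows that a $G$-grading $\Gamma$ on the finite-dimensional vector space $\cA$ is the same datum as a morphism $\eta_\Gamma\colon G^D\to\GLs(\cA)$, and that the grading is compatible with the multiplication $\mu$ exactly when $\mu$ is a morphism of $G^D$-representations, which in turn is exactly the condition that $\eta_\Gamma$ factors through $\Stabs_{\GLs(\cA)}(\mu)=\AAut(\cA)$. I would simply collect this: given $\eta\colon G^D\to\AAut(\cA)$, define $\cA_g\bydef\{x\in\cA\mid\rho(x)=x\ot g\}$ where $\rho$ is the comodule structure corresponding to $\eta$, and check that $\Gamma\mapsto\eta_\Gamma$ and $\eta\mapsto\Gamma$ are mutually inverse.

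For the second claim, I would combine the formula \eqref{action_aut_conj} with the observation, made just before the proposition, that $\Ad_\vphi(\Gamma)$ is the grading with components $\vphi(\cA_g)$. By definition, two $G$-gradings $\Gamma$ and $\Gamma'$ on $\cA$ are isomorphic iff there exists $\vphi\in\Aut(\cA)$ with $\vphi(\cA_g)=\cA'_g$ for all $g\in G$, i.e.\ $\Ad_\vphi(\Gamma)=\Gamma'$. Under the bijection of the first claim, this is the same as the equality $\Ad_\vphi\circ\eta_\Gamma=\eta_{\Gamma'}$ of morphisms $G^D\to\AAut(\cA)$.

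The third claim is the main point. Say $\Gamma$ is a $G$-grading and $\Gamma'$ an $H$-grading, both with supports generating the respective groups. By the explicit formula for $\eta^*_\Gamma$ given before the proposition, the condition that $\supp\Gamma$ generates $G$ is equivalent to $\eta^*_\Gamma$ being surjective, i.e.\ to $\eta_\Gamma$ being a closed imbedding; so $\eta_\Gamma$ identifies $G^D$ with a diagonalizable subgroupscheme $\Ks_\Gamma\subset\AAut(\cA)$, and likewise for $\Gamma'$. By definition, $\Gamma'$ is weakly isomorphic to $\Gamma$ iff it is isomorphic, as an $H$-grading, to ${}^\alpha\Gamma$ for some group isomorphism $\alpha\colon G\to H$. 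Using $\eta_{\,{}^\alpha\!\Gamma}=\eta_\Gamma\circ\alpha^D$ together with the second claim, this translates to the existence of $\vphi\in\Aut(\cA)$ and an isomorphism $\alpha\colon G\to H$ such that $\eta_{\Gamma'}=\Ad_\vphi\circ\eta_\Gamma\circ\alpha^D$. Since $\alpha^D\colon H^D\to G^D$ is an isomorphism of group schemes, this last condition is equivalent to $\Ks_{\Gamma'}=\Ad_\vphi(\Ks_\Gamma)$, i.e.\ to $\Ks_\Gamma$ and $\Ks_{\Gamma'}$ lying in the same $\Aut(\cA)$-orbit of subgroupschemes. Conversely, any diagonalizable subgroupscheme $\Ks\subset\AAut(\cA)$ is of the form $H^D$ for a unique abelian group $H$ (recovered as its character group), and the closed imbedding $\Ks\hookrightarrow\AAut(\cA)$ corresponds, under the first claim, to an $H$-grading whose support generates $H$. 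The main obstacle is this last bookkeeping: being careful that the generation hypothesis is exactly what guarantees $\eta_\Gamma$ is a closed imbedding and that the recovered group $H$ from a diagonalizable subgroupscheme is canonical, so that the correspondence is truly well-defined on orbits.
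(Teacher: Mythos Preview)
Your proposal is correct and matches the paper's approach: the paper presents this proposition as a summary (``To summarize:'') of the discussion immediately preceding it and marks it with a $\square$ rather than giving a separate proof. Your write-up is simply an explicit assembly of those same observations, with the appropriate care in the third claim about closed imbeddings and recovering the group from its diagonalizable subgroupscheme.
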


Let $\Gamma$ be an abelian group grading on $\cA$. Define the subgroupscheme $\Diags(\Gamma)$ of $\AAut(\cA)$ as follows:
\[
\Diags(\Gamma)(\cR)\bydef\{f\in\Aut_\cR(\cA\ot\cR)\;|\;f|_{\cA_g\ot\cR}\in\cR^\times\id_{\cA_g\ot\cR}\mbox{ for all }g\in G\}.
\]
Since $\Diags(\Gamma)$ is a subgroupscheme of a torus in $\GLs(\cA)$, it is diagonalizable, so $\Diags(\Gamma)=U^D$ for some finitely generated abelian group $U$. If $\Gamma$ is realized as a $G$-grading, then \eqref{action_group_scheme} shows that the image of the imbedding $\eta_\Gamma\colon G^D\to\AAut(\cA)$ is a subgroupscheme of $\Diags(\Gamma)$. The imbedding $G^D\to\Diags(\Gamma)$ corresponds to an epimorphism $U\to G$. We conclude that $U$ satisfies the definition of the universal abelian group of $\Gamma$ and hence $\Diags(\Gamma)=U(\Gamma)^D$.

Let $\Gamma$ and $\Gamma'$ be two abelian group gradings on $\cA$ and let $\Qs=\Diags(\Gamma)$ and $\Qs'=\Diags(\Gamma')$. Now $\Gamma$ is a refinement of $\Gamma'$ if and only if $\Gamma'={}^\alpha\Gamma$ for some epimorphism $\alpha\colon U(\Gamma)\to U(\Gamma')$ if and only if  $\eta_{\Gamma'}=\eta_\Gamma\circ\alpha^D$. Hence we obtain
\[
\Gamma'\quad\mbox{is a coarsening of}\quad\Gamma\;\Leftrightarrow\;\Qs'\mbox{ is a subgroupscheme of }\Qs.
\]
It follows that fine gradings correspond to maximal diagonalizable subgroupschemes of $\AAut(\cA)$. To summarize:

\begin{proposition}\label{maximal_diag_gs}
The equivalence classes of fine gradings on $\cA$ are in one-to-one correspondence with the $\Aut(\cA)$-orbits of maximal diagonalizable subgroupschemes in $\AAut(\cA)$.\hfill{$\square$}
\end{proposition}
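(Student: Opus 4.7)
The plan is to assemble three ingredients already established in this section: Proposition~\ref{duality_char_any}, the identification $\Diags(\Gamma)=U(\Gamma)^D$, and the coarsening criterion $\Gamma'\text{ is a coarsening of }\Gamma\Leftrightarrow\Diags(\Gamma')\text{ is a subgroupscheme of }\Diags(\Gamma)$, both of which were derived in the paragraph immediately preceding the statement.

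First I would pick a canonical representative for each equivalence class of abelian group gradings on $\cA$, namely $\Gamma$ regarded as a $U(\Gamma)$-grading; by definition of the universal group, the support of any such representative generates its grading group. The remark above the statement says that two equivalent abelian group gradings become weakly isomorphic when regarded over their (canonically isomorphic) universal groups. Hence equivalence classes of abelian group gradings on $\cA$ are in bijection with weak isomorphism classes of group gradings whose support generates the grading group. Proposition~\ref{duality_char_any} now translates the latter into $\Aut(\cA)$-orbits of diagonalizable subgroupschemes of $\AAut(\cA)$, via $\Gamma\mapsto\Diags(\Gamma)$.

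Next I would note that this bijection converts the refinement partial order on equivalence classes into inclusion on subgroupschemes: a refinement $\Gamma$ of $\Gamma'$ yields an epimorphism $\alpha\colon U(\Gamma)\to U(\Gamma')$, hence a closed immersion $\alpha^D\colon U(\Gamma')^D\hookrightarrow U(\Gamma)^D$, i.e., $\Diags(\Gamma')\subset\Diags(\Gamma)$, and the converse is obtained by reversing this argument. Consequently $\Gamma$ is fine if and only if $\Diags(\Gamma)$ is maximal among diagonalizable subgroupschemes of $\AAut(\cA)$. Since $\Aut(\cA)$ acts on $\AAut(\cA)$ by group scheme automorphisms and so preserves inclusions, maximality is constant along each $\Aut(\cA)$-orbit; restricting the bijection to maximal objects on both sides gives the desired correspondence.

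The only delicate point is ensuring that \emph{equivalence}, not merely weak isomorphism, appears on the left: this is handled entirely by choosing the universal group as the label for each grading, which is permissible because the universal groups of equivalent gradings are canonically isomorphic. No further obstacle arises; the proposition is essentially a dictionary entry read off from the formalism already set up.
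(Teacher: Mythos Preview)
Your proposal is correct and follows essentially the same route as the paper: the proposition is stated with a $\square$ because the paper regards it as a direct summary of the preceding paragraph, which establishes $\Diags(\Gamma)=U(\Gamma)^D$ and the coarsening $\Leftrightarrow$ subgroupscheme-inclusion criterion, combined with Proposition~\ref{duality_char_any}. Your explicit handling of the equivalence-versus-weak-isomorphism issue via universal groups is exactly what the paper alludes to earlier when it remarks that equivalent abelian group gradings are weakly isomorphic over their universal groups.
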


As a consequence of the descriptions in Propositions \ref{duality_char_any} and \ref{maximal_diag_gs}, we obtain the following results, which will be used to transfer the classification of gradings from the algebra of octonions to the simple Lie algebra of type $G_2$ and from the Albert algebra to the simple Lie algebra of type $F_4$.

\begin{theorem}\label{transfer}
Let $\cA$ and $\cB$ be finite-dimensional (nonassociative) algebras. Assume we have a morphism $\theta\colon\AAut(\cA)\to\AAut(\cB)$. Then, for any abelian group $G$, we have a mapping, $\Gamma\to\theta(\Gamma)$, from $G$-gradings on $\cA$ to $G$-gradings on $\cB$. If $\Gamma$ and $\Gamma'$ are isomorphic (respectively, weakly isomorphic), then $\theta(\Gamma)$ and $\theta(\Gamma')$ are isomorphic (respectively, weakly isomorphic).
\end{theorem}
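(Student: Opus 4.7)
The plan is to translate the statement through the correspondence of Proposition \ref{duality_char_any}. By construction, $\theta(\Gamma)$ is the $G$-grading on $\cB$ associated to the morphism $\theta\circ\eta_\Gamma\colon G^D\to\AAut(\cB)$, so the mapping $\Gamma\mapsto\theta(\Gamma)$ is well-defined.

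For the isomorphism claim, I would use the characterization from Proposition \ref{duality_char_any} that two $G$-gradings $\Gamma,\Gamma'$ on $\cA$ are isomorphic if and only if $\eta_{\Gamma'}=\mathrm{Ad}_{\vphi}\circ\eta_\Gamma$ for some $\vphi\in\Aut(\cA)$. The key step is the naturality identity
\[
\theta\circ\mathrm{Ad}_{\vphi}=\mathrm{Ad}_{\theta_\bF(\vphi)}\circ\theta
\]
of morphisms $\AAut(\cA)\to\AAut(\cB)$, where $\theta_\bF\colon\Aut(\cA)\to\Aut(\cB)$ denotes the map on $\bF$-points. Granted this, one has $\theta\circ\eta_{\Gamma'}=\mathrm{Ad}_{\theta_\bF(\vphi)}\circ\theta\circ\eta_\Gamma$, which by Proposition \ref{duality_char_any} means that $\theta(\Gamma)$ and $\theta(\Gamma')$ are isomorphic $G$-gradings, realized by $\theta_\bF(\vphi)$. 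The identity itself is routine: unwinding \eqref{action_aut_conj} on an $\cR$-point $f\in\Aut_\cR(\cA\ot\cR)$, both sides equal the conjugate of $\theta_\cR(f)$ by $\theta_\bF(\vphi)\ot\id_\cR$, using that each $\theta_\cR$ is a group homomorphism and that $\theta$ is natural under base change.

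For the weak isomorphism claim, suppose $\Gamma$ is a $G$-grading and $\Gamma'$ is an $H$-grading that are weakly isomorphic. As noted in the preceding text, this is equivalent to $\Gamma'$ being isomorphic (as an $H$-grading) to ${}^\alpha\Gamma$ for some group isomorphism $\alpha\colon G\to H$. Combining the formula $\theta({}^\alpha\Gamma)={}^\alpha(\theta(\Gamma))$ established just above the theorem with the isomorphism case already proved, I conclude that $\theta(\Gamma')$ is isomorphic to ${}^\alpha(\theta(\Gamma))$, so $\theta(\Gamma)$ and $\theta(\Gamma')$ are weakly isomorphic.

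I do not anticipate any serious obstacle: the only nonformal ingredient is the naturality identity $\theta\circ\mathrm{Ad}_{\vphi}=\mathrm{Ad}_{\theta_\bF(\vphi)}\circ\theta$, which is a standard consequence of $\theta$ being a morphism of affine group schemes, together with the fact that conjugation in a group scheme is itself defined via base change and the group law.
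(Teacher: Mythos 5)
Your proposal is correct and follows essentially the same route as the paper's proof: both reduce everything to the naturality identity $\theta\circ\Ad_\vphi=\Ad_{\theta_\FF(\vphi)}\circ\theta$ (the paper phrases it as a commutative square justified by $\theta_\cR(\vphi\ot\id)=\theta_\FF(\vphi)\ot\id$), and both handle weak isomorphism via $\theta({}^\alpha\Gamma)={}^\alpha(\theta(\Gamma))$ together with the isomorphism case. No gaps.
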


\begin{proof}
We have already defined $\theta(\Gamma)$. Let $\vphi\in\Aut(\cA)$ and $\psi=\theta_\FF(\vphi)$. Then the following diagram commutes:
\[
\xymatrix{
{\AAut(\cA)}\ar[r]^\theta\ar[d]_{\Ad_\vphi} & {\AAut(\cB)}\ar[d]^{\Ad_\psi}\\
{\AAut(\cA)}\ar[r]^\theta & \AAut(\cB)
}
\]
This follows immediately from \eqref{action_aut_conj} and the equation $\theta_\cR(\vphi\ot\id)=\psi\ot\id$, which is a consequence of the naturality of $\theta$.

Now if $\vphi$ sends $\Gamma$ to $\Gamma'$ (respectively, ${}^\alpha\Gamma$ to $\Gamma'$), then $\psi$ sends $\theta(\Gamma)$ to $\theta(\Gamma')$ (respectively, $\theta({}^\alpha\Gamma)={}^\alpha(\theta(\Gamma))$ to $\theta(\Gamma')$).
\end{proof}

\begin{theorem}\label{transfer_fine}
Let $\cA$ and $\cB$ be finite-dimensional (nonassociative) algebras. Assume we have an isomorphism $\theta\colon\AAut(\cA)\to\AAut(\cB)$. Let $\Gamma$ be a $G$-grading on $\cA$ such that $G$ is its universal abelian group. Then $\Gamma$ is a fine abelian group grading if and only if so is $\theta(\Gamma)$. Also, two such fine abelian group gradings, $\Gamma$ and $\Gamma'$, are equivalent if and only if $\theta(\Gamma)$ and $\theta(\Gamma')$ are equivalent.
\end{theorem}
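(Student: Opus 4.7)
The plan is to deduce this theorem directly from Proposition \ref{maximal_diag_gs}, exploiting the fact that an isomorphism of affine group schemes bijectively transports diagonalizable subgroupschemes, preserving inclusions and hence maximality.

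First I would note that, since $G$ is the universal abelian group of $\Gamma$, the image of $\eta_\Gamma\colon G^D\to\AAut(\cA)$ equals $\Diags(\Gamma)=G^D$. Consequently $\eta_{\theta(\Gamma)}=\theta\circ\eta_\Gamma$ is also a closed imbedding, with image $\theta(\Diags(\Gamma))$, so $\supp\theta(\Gamma)$ generates $G$ and $\theta(\Diags(\Gamma))\subseteq\Diags(\theta(\Gamma))$.

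For the forward direction of the first claim, assume $\Gamma$ is fine. By Proposition \ref{maximal_diag_gs}, $\Diags(\Gamma)$ is a maximal diagonalizable subgroupscheme of $\AAut(\cA)$, so $\theta(\Diags(\Gamma))$ is maximal diagonalizable in $\AAut(\cB)$. Being contained in the diagonalizable subgroupscheme $\Diags(\theta(\Gamma))$, maximality forces $\theta(\Diags(\Gamma))=\Diags(\theta(\Gamma))$, so $\theta(\Gamma)$ is fine (and, incidentally, $G=U(\theta(\Gamma))$). For the converse, I would first observe that if $G\ne U(\theta(\Gamma))$, then $\theta(\Gamma)$ admits its $U(\theta(\Gamma))$-grading as a proper refinement, contradicting fineness; thus $G=U(\theta(\Gamma))$, and the forward argument applied to $\theta^{-1}$ and $\theta(\Gamma)$ gives that $\Gamma=\theta^{-1}(\theta(\Gamma))$ is fine.

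For the equivalence assertion, one direction is Theorem \ref{transfer}. For the other, I would use that $\theta$ induces a group isomorphism $\Aut(\cA)\to\Aut(\cB)$ intertwining the conjugation actions (this is the commuting square displayed in the proof of Theorem \ref{transfer}), whence $\theta$ yields a bijection between $\Aut(\cA)$-orbits and $\Aut(\cB)$-orbits of maximal diagonalizable subgroupschemes. By Proposition \ref{maximal_diag_gs}, the equivalence class of $\theta(\Gamma)$ corresponds to the orbit of $\Diags(\theta(\Gamma))=\theta(\Diags(\Gamma))$, so if $\theta(\Gamma)$ and $\theta(\Gamma')$ are equivalent then $\Diags(\Gamma)$ and $\Diags(\Gamma')$ are $\Aut(\cA)$-conjugate, hence $\Gamma$ and $\Gamma'$ are equivalent. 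The main subtlety is the identification $G=U(\theta(\Gamma))$ in the converse of the first claim; once this is in hand, everything reduces to the formal behaviour of $\theta$ as an isomorphism of affine group schemes.
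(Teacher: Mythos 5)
Your overall strategy is exactly the paper's: everything is routed through Proposition \ref{maximal_diag_gs} and the dictionary between fine gradings and maximal diagonalizable subgroupschemes. The forward implication ($\Gamma$ fine $\Rightarrow$ $\theta(\Gamma)$ fine) is argued just as in the paper, and your treatment of the equivalence assertion is correct: one direction is Theorem \ref{transfer}, and the other follows either from your orbit argument (using that $\theta$ intertwines the conjugation actions and that $\Diags(\theta(\Gamma))=\theta(\Diags(\Gamma))$ once fineness is known) or, as the paper does, from the remark that fine gradings over their universal groups are equivalent iff weakly isomorphic, so Theorem \ref{transfer} applies to both $\theta$ and $\theta^{-1}$. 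This part is a cosmetic variant of the paper's argument, not a different route.

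There is, however, a genuine flaw in your converse. You claim that if $G\ne U(\theta(\Gamma))$ then ``$\theta(\Gamma)$ admits its $U(\theta(\Gamma))$-grading as a proper refinement.'' This is false: the realization of a grading over its universal group has, by construction, exactly the same homogeneous components as the original grading (this is precisely the content of $\Diags(\Gamma')=U(\Gamma')^D$ and of the universal property), so it is never a \emph{proper} refinement and cannot contradict fineness. You have correctly isolated the crux --- one needs $G=U(\theta(\Gamma))$, i.e.\ $\theta(G^D)=\Diags(\theta(\Gamma))$ --- but the only information available before fineness of $\Gamma$ is established is the inclusion $\theta(G^D)\subseteq\Diags(\theta(\Gamma))$; upgrading it to an equality is equivalent to the maximality of $\theta(G^D)$, which is exactly what the converse is trying to prove, so the argument becomes circular once the false step is removed. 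What would have to be excluded is the scenario $\theta(G^D)\subsetneq\Diags(\theta(\Gamma))$ with $\Diags(\theta(\Gamma))$ maximal: pulling back by $\theta^{-1}$ this would give a diagonalizable subgroupscheme strictly containing $\Diags(\Gamma)=G^D$ whose induced grading properly refines $\Gamma$ while inducing the \emph{same} decomposition of $\cB$, and nothing in your proposal rules this out. (For comparison, the paper's own proof only writes out the forward implication and the equivalence statement, which is all that is used in the applications.)
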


\begin{proof}
If $\Gamma$ is fine, then the image of $\eta_\Gamma\colon G^D\to\AAut(\cA)$ is a maximal diagonalizable subgroupscheme of $\AAut(\cA)$. Hence the image of $\eta_{\theta(\Gamma)}=\theta\circ\eta_\Gamma$ is a maximal diagonalizable subgroupscheme of $\AAut(\cB)$ and so $\theta(\Gamma)$ is fine. It remains to recall that, if universal groups are used, two fine gradings are equivalent if and only if they are weakly isomorphic, so we can apply Theorem \ref{transfer}.
\end{proof}

\subsection{Gradings on Lie algebras of derivations}\null\quad

Recall that, for any algebraic affine group scheme $\Gs$, we have the {\em adjoint representation} $\Ad\colon\Gs\rightarrow\GLs\bigl(\Lie(\Gs)\bigr)$, see e.g.
\cite[\S 21]{KMRT}.
The differential of $\Ad$ is $\ad\colon\Lie(\Gs)\rightarrow\Gl\bigl(\Lie(\Gs)\bigr)$, the adjoint representation of $\Lie(\Gs)$.
The image of $\Ad$ is contained in the subgroupscheme $\AAut(\Lie(\Gs))$ of $\GLs\bigl(\Lie(\Gs)\bigr)$, and the image of $\ad$ is contained in $\Der\bigl(\Lie(\Gs)\bigr)$.

For $\Gs=\bAut(\cA)$, we have $\Lie(\Gs)=\Der(\cA)$. Hence, given a $G$-grading $\Gamma$ on $\cA$, we get an induced $G$-grading $\Ad(\Gamma)$ on $\Der(\cA)$ by Theorem \ref{transfer}.
Since $\Ad$ in this case is the composition of the closed imbedding $\AAut(\cA)\to\GLs(\cA)$ and the standard action $\GLs(\cA)\to\GLs(\Hom(\cA,\cA))$, the grading $\Ad(\Gamma)$ is given by the standard $\FF G$-comodule structure on $\Hom(\cA,\cA)$, which is determined by the requirement that the evaluation map,
$\mathrm{ev}\colon\Hom(\cA,\cA)\ot\cA\to\cA$, be a homomorphism of $\FF G$-comodules. This implies that the induced $G$-grading $\Ad(\Gamma)$ on $\Der(\cA)$ is the natural one: $\Der(\cA)=\bigoplus_{g\in G}\Der(\cA)_g$ where
\[
\Der(\cA)_g=\{d\in\Der(\cA)\;|\;d(\cA_h)\subset\cA_{gh}\;\mbox{for all}\;h\in G\}.
\]
Let $\cL=\Der(\cA)$. If we know that $\Ad\colon\AAut(\cA)\to\AAut(\cL)$ is an isomorphism, then every $G$-grading on $\cL$ is induced from a unique $G$-grading on $\cA$ in this way, and we can transfer the classification of gradings from $\cA$ to $\cL$ via Theorems \ref{transfer} and \ref{transfer_fine}.

Let $\bFalg$ be the algebraic closure of the ground field $\FF$. In order for $\Ad$ to be an isomorphism of affine group schemes, the following conditions are necessary:
\begin{enumerate}
\item[1)] $\Ad_\bFalg\colon\Aut_\bFalg(\cA\ot\bFalg)\to\Aut_\bFalg(\cL\ot\bFalg)$ is a bijection;
\item[2)] $\ad\colon \cL\to\Der(\cL)$ is a bijection.
\end{enumerate}
If $\chr{\FF}=0$, then condition 1) alone is sufficient. If $\chr{\FF}=p$, even the combination of both conditions does not imply, in general, that $\Ad$ is an isomorphism. Recall that an algebraic affine group scheme $\Gs$ is smooth if and only if $\dim\Lie(\Gs)=\dim\Gs$ (see e.g. \cite[\S 21]{KMRT}). The dimension of $\Gs$ coincides with the dimension of the algebraic group $\Gs(\bFalg)$. Hence, for $\Gs=\AAut(\cA)$, smoothness is equivalent to the condition $\dim\Der(\cA)=\dim\Aut_\bFalg(\cA\ot\bFalg)$. If $\AAut(\cA)$ is smooth, then the combination of 1) and 2) does imply that $\Ad$ is an isomorphism of affine group schemes --- see e.g. \cite[(22.5)]{KMRT} and observe that, under conditions 1) and 2), the smoothness of $\AAut(\cA)$ implies the smoothness of $\AAut(\cL)$.


\section{Gradings on Cayley algebras}\label{se:Cayley}

The aim of this section is to present the known results about gradings on Cayley algebras in a way that will be convenient for our study of gradings on the Albert algebra.
We also obtain, for an arbitrary abelian group $G$, a classification of $G$-gradings up to isomorphism on the (unique) Cayley algebra over an algebraically closed field.
Throughout this section, the ground field $\bF$ will be arbitrary, unless stated otherwise.

A \emph{Cayley algebra} $\cC$ over $\bF$ is an eight-dimensional unital composition algebra. Then, there exists a nondegenerate quadratic form (the norm) $n\colon  \cC\rightarrow \bF$ such that $n(xy)=n(x)n(y)$ for any $x,y\in\cC$. Here the norm being nondegenerate means that its polar form: $n(x,y)=n(x+y)-n(x)-n(y)$ is a nondegenerate symmetric bilinear form.

The next result summarizes some of the well-known properties of these algebras (see \cite[Chapter VIII]{KMRT} and \cite[Chapter 2]{ZSSS}):

\begin{proposition}\label{pr:Cayley}
Let $\cC$ be a Cayley algebra over $\bF$. Then:
\begin{enumerate}
\item[1)] Any $x\in\cC$ satisfies the degree $2$ Cayley-Hamilton equation:
\begin{equation}\label{eq:CayleyHamilton}
x^2-n(x,1)x+n(x)1=0.
\end{equation}

\item[2)] The map $x\mapsto \bar x=n(x,1)1-x$ is an involution, called the \emph{standard conjugation}, of $\cC$ and for any $x,y,z\in\cC$, $x\bar x=\bar xx=n(x)1$ and $n(xy,z)=n(y,\bar xz)=n(x,z\bar y)$ hold.

\item[3)] If the norm represents $0$ --- which is always the case if $\bF$ is quadratically closed --- then there is a \emph{``good basis''} $\{e_1,e_2,u_1,u_2,u_3,v_1,v_2,v_3\}$ of $\cC$ consisting of isotropic elements, such that $n(e_1,e_2)=n(u_i,v_i)=1$ for any $i=1,2,3$ and $n(e_r,u_i)=n(e_r,v_i)=n(u_i,u_j)=n(u_i,v_j)=n(v_i,v_j)$ for any $r=1,2$ and $1\leq i\ne j\leq 3$, whose multiplication table is shown in Figure \ref{fig:Cayley}.
\begin{figure}
\[ \vbox{\offinterlineskip
\halign{\hfil$#$\enspace\hfil&#\vreglon
 &\hfil\enspace$#$\enspace\hfil
 &\hfil\enspace$#$\enspace\hfil&#\vregleta
 &\hfil\enspace$#$\enspace\hfil
 &\hfil\enspace$#$\enspace\hfil
 &\hfil\enspace$#$\enspace\hfil&#\vregleta
 &\hfil\enspace$#$\enspace\hfil
 &\hfil\enspace$#$\enspace\hfil
 &\hfil\enspace$#$\enspace\hfil&#\vreglon\cr
 &\omit\hfil\vrule width 1pt depth 4pt height 10pt
   &e_1&e_2&\omit&u_1&u_2&u_3&\omit&v_1&v_2&v_3&\omit\cr
 \noalign{\hreglon}
 e_1&&e_1&0&&u_1&u_2&u_3&&0&0&0&\cr
 e_2&&0&e_2&&0&0&0&&v_1&v_2&v_3&\cr
 &\multispan{11}{\hregletafill}\cr
 u_1&&0&u_1&&0&v_3&-v_2&&-e_1&0&0&\cr
 u_2&&0&u_2&&-v_3&0&v_1&&0&-e_1&0&\cr
 u_3&&0&u_3&&v_2&-v_1&0&&0&0&-e_1&\cr
 &\multispan{11}{\hregletafill}\cr
 v_1&&v_1&0&&-e_2&0&0&&0&u_3&-u_2&\cr
 v_2&&v_2&0&&0&-e_2&0&&-u_3&0&u_1&\cr
 v_3&&v_3&0&&0&0&-e_2&&u_2&-u_1&0&\cr
 &\multispan{12}{\hreglonfill}\cr}}
\]
\caption{Multiplication table of the Cayley algebra}\label{fig:Cayley}
\end{figure}
In particular, up to isomorphism, there is a unique Cayley algebra whose norm represents $0$, which is called the \emph{split Cayley algebra}.\qed
\end{enumerate}
\end{proposition}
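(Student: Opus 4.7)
The plan is to follow the classical route for composition algebras, taking the multiplicativity of the norm as the single input and deriving everything from its linearizations. Polarizing $n(xy)=n(x)n(y)$ in each slot gives the fundamental identities
\[
n(xy,xy')=n(x)\,n(y,y'),\qquad n(xy,x'y)=n(x,x')\,n(y),
\]
and the double polarization
\[
n(xy,x'y')+n(x'y,xy')=n(x,x')\,n(y,y').
\]
I would treat these as the working toolkit for the entire proposition.

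For part 1), I would set $\bar x\bydef n(x,1)1-x$ and note that the desired Cayley--Hamilton relation is equivalent to $x\bar x=n(x)1$. To see the latter, I would show that $n(x\bar x-n(x)1,z)=0$ for every $z\in\cC$ by expanding $n(x\bar x,z)=n(\bar x,\bar x z)$ via the toolkit identities above (after one first verifies that $n$ is associative with respect to multiplication by $1$), and then reduce to $n(x,1)n(x,z)-n(x)n(1,z)$, which matches $n(x,1)\cdot n(x,z)-n(x)n(z,1)$. The nondegeneracy of $n$ then forces $x\bar x=n(x)1$, and by symmetry $\bar x x=n(x)1$ as well.

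For part 2), once $x\bar x=\bar x x=n(x)1$ is established, I would linearize in $x$ to obtain $x\bar y+y\bar x=n(x,y)1$, which, combined with $n(x,1)1=x+\bar x$, gives $\overline{xy}=\bar y\bar x$, i.e., the conjugation is an involutive antiautomorphism. The identities $n(xy,z)=n(y,\bar x z)=n(x,z\bar y)$ would then be proved from the double-polarization formula by specializing one slot to $1$ and using $x+\bar x=n(x,1)1$. This is routine and I do not anticipate difficulty.

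For part 3), the main construction is the good basis. Assuming $n$ represents zero, I would first produce a hyperbolic pair of idempotents: an isotropic $x$ with $n(x,1)\ne 0$ can be rescaled to give $e_1$ with $e_1^2=e_1$ and $n(e_1)=0$, $n(e_1,1)=1$; then $e_2\bydef 1-e_1$ is the complementary idempotent and $\{e_1,e_2\}$ is hyperbolic. Using $U\bydef\{x\in\cC:e_1 x=x=xe_2\}$ and $V\bydef\{x\in\cC:e_2 x=x=xe_1\}$, the Peirce decomposition (which follows from part 1) applied to $e_1$) yields $\cC=\bF e_1\oplus\bF e_2\oplus U\oplus V$ with $\dim U=\dim V=3$. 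The multiplication rules of Figure \ref{fig:Cayley} (products of the form $UU\subset V$, $VV\subset U$, $UV\subset\bF e_1$, $VU\subset\bF e_2$) are then forced by the Peirce rules together with $x\bar x=n(x)1$ and the identity $n(xy,z)=n(y,\bar x z)$. Finally, pairing $U$ with $V$ under $n$ is a perfect pairing, so dual bases $\{u_1,u_2,u_3\}$ and $\{v_1,v_2,v_3\}$ can be chosen, and after a triangular change of basis the cross products $u_i u_j$ and $v_i v_j$ can be normalized to match the table. The uniqueness of the split Cayley algebra follows because any two such bases yield isomorphic multiplication tables. The main obstacle I expect is not conceptual but bookkeeping: making the sign and index conventions in the table \ref{fig:Cayley} come out exactly right requires fixing orientations in $U$ and $V$ compatibly via the duality induced by $n$, and this step is where one should work carefully with the Cayley--Dickson doubling applied to the quaternion subalgebra $\bF e_1\oplus\bF e_2\oplus\bF u_1\oplus\bF v_1$, rather than trying to guess the signs directly.
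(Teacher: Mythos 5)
The paper itself does not prove Proposition~\ref{pr:Cayley}: it is stated as a summary of well-known facts with a reference to \cite[Chapter VIII]{KMRT} and \cite[Chapter 2]{ZSSS}, so your proposal can only be measured against the standard argument (which is also what the paper silently relies on, and partially reproduces in its proof of Theorem~\ref{th:grOctonions}). Your plan is essentially that standard argument and is sound: the three polarization identities of $n(xy)=n(x)n(y)$ do yield everything. One remark on the logical order in parts 1) and 2): as written you invoke $n(x\bar x,z)=n(\bar x,\bar xz)$, i.e.\ the adjoint identity of part 2), to prove part 1), which looks circular. It is not, but you should make explicit that the adjoint identity $n(xy,z)=n(y,\bar xz)$ follows \emph{directly} from the double polarization by setting $x'=1$ (namely $n(xy,y')+n(y,xy')=n(x,1)n(y,y')$ gives $n(xy,y')=n(y,(n(x,1)1-x)y')$), with no appeal to the Cayley--Hamilton equation; once that is in place, $n(x\bar x,z)=n(\bar x,\bar xz)=n(\bar x)n(1,z)=n(x)n(1,z)$ and nondegeneracy finish part 1). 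So prove the adjoint identities first, then 1), then the involution property.

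In part 3) there is one small genuine gap and one place where you are making life harder than necessary. The gap: the hypothesis only gives an isotropic $x\ne 0$, not an isotropic element with $n(x,1)\ne 0$; if $n(x,1)=0$ you must manufacture one, e.g.\ choose $y$ with $n(x,\bar y)\ne 0$ (possible by nondegeneracy) and take $e_1$ proportional to $xy$, which is isotropic since $n(xy)=n(x)n(y)=0$ and has nonzero trace $n(xy,1)=n(x,\bar y)$ --- this is exactly how the paper produces $e_1$ in its proof of Theorem~\ref{th:grOctonions}. As for the sign bookkeeping in Figure~\ref{fig:Cayley}: rather than a triangular change of dual bases followed by a Cayley--Dickson doubling, the clean route (again the one the paper uses) is to observe that $(x,y,z)\mapsto n(xy,z)$ restricted to $\cU$ is alternating and nonzero, pick $u_1,u_2,u_3\in\cU$ with $n(u_1u_2,u_3)=1$, and \emph{define} $v_1=u_2u_3$, $v_2=u_3u_1$, $v_3=u_1u_2$; all entries of the table, signs included, then drop out of $uv=-n(u,v)e_1$, $vu=-n(u,v)e_2$ and the alternating property, with nothing left to normalize. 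With these two adjustments your proof is complete and agrees with the classical one.
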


A ``good basis'' $\{e_1,e_2,u_1,u_2,u_3,v_1,v_2,v_3\}$ of the split Cayley algebra $\cC$ gives a $\bZ^2$-grading with
\[
\begin{array}{lr}
\cC_{(0,0)}=\bF e_1\oplus \bF e_2,& \\
\cC_{(1,0)}=\bF u_1,& \cC_{(-1,0)}=\bF v_1,\\
\cC_{(0,1)}=\bF u_2,& \cC_{(0,-1)}=\bF v_2,\\
\cC_{(1,1)}=\bF v_3,& \cC_{(-1,-1)}=\bF u_3.
\end{array}
\]
This is called the \emph{Cartan grading} on the split Cayley algebra, and $\bZ^2$ is its universal grading group.

\begin{remark} The Cartan grading is fine as a group grading, but it is not so as a general grading, because the decomposition $C=\bF e_1\oplus\bF e_2\oplus \bF u_1\oplus\bF u_2\oplus\bF u_3\oplus \bF v_1\oplus\bF v_2\oplus\bF v_3$ is a proper refinement. This refinement is not even a semigroup grading (because $(u_1u_2)u_3=-e_2$ and $u_1(u_2u_3)=-e_1$ are in different homogeneous subspaces).
\end{remark}

\smallskip

Let $\cQ$ be a proper four-dimensional subalgebra of the Cayley algebra $\cC$ such that $n\vert_\cQ$ is nondegenerate, and let $u$ be any element in $\cC\setminus\cQ$ with $n(u)=\alpha\ne 0$. Then $\cC=\cQ\oplus\cQ u$ and we get:
\[
\begin{split}
&n(a+bu)=n(a)+\alpha n(b),\\
&(a+bu)(c+du)=(ac-\alpha\bar db) + (da+b\bar c)u,
\end{split}
\]
for any $a,b,c,d\in\cQ$. Then $\cC$ is said to be obtained from $\cQ$ by means of the \emph{Cayley--Dickson doubling process} and we write $\cC=\CD(\cQ,\alpha)$. This gives a $\bZ_2$-grading on $\cC$ with $\cC\subo=\cQ$ and $\cC\subuno=\cQ u$.

The subalgebra $\cQ$ above is a quaternion subalgebra which in turn can be obtained from a quadratic subalgebra $\cK$ through the same process $\cQ=\CD(\cK,\beta)=\cK\oplus\cK v$, and this gives a $\bZ_2$-grading of $\cQ$ and hence a $\bZ_2^2$-grading of $\cC=\cK\oplus\cK v\oplus\cK u\oplus(\cK v)u$. We write here $\cQ=\CD(\cK,\beta,\alpha)$.

If $\chr{\FF}\ne 2$, then $\cK$ can be obtained in turn from the ground field: $\cK=\CD(\bF,\gamma)$, and a $\bZ_2^3$-grading of $\cC$ appears. Here we write $\cC=\CD(\bF,\gamma,\beta,\alpha)$.

These gradings by $\bZ_2^r$, $r=1,2,3$, will be called \emph{gradings induced by the Cayley--Dickson doubling process}. The groups $\bZ_2^r$ are their universal grading groups.

\smallskip

The following result describes all possible gradings on Cayley algebras:

\begin{theorem}[\cite{ElduqueOctonions}]\label{th:grOctonions}
Any abelian group grading on a Cayley algebra is, up to equivalence, either a grading
induced by the Cayley--Dickson doubling process or a coarsening of the Cartan grading on the split Cayley algebra.\qed
\end{theorem}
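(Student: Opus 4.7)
The plan is a case analysis based on the dimension of the identity component $\cC_e$, which I will show must lie in $\{1,2,4,8\}$. The essential preliminary step is proving that $n|_{\cC_e}$ is nondegenerate. For homogeneous $x \in \cC_g$ with $g \ne e$, the Cayley--Hamilton relation $x^2 - n(x,1)x + n(x)1 = 0$ places the three summands in components of degrees $g^2$, $g$, and $e$ respectively; comparison of components forces $n(x,1)=0$ in every subcase, so nonidentity components consist of trace-zero elements. The identity $n(x,y)\cdot 1 = x\bar y + y\bar x$ then places $n(x,y)\cdot 1$ in $\cC_{gh}\cap\cC_e$, which vanishes unless $gh=e$; so distinct non-inverse components are $n$-orthogonal, and in particular $\cC_e \perp \cC_g$ for all $g \ne e$. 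Nondegeneracy of $n$ on $\cC$ thus yields nondegeneracy of $n|_{\cC_e}$, whence $\cC_e$ is a composition subalgebra and $\dim\cC_e \in \{1,2,4,8\}$.

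I would then dispose of the four cases. The case $\dim\cC_e = 8$ is the trivial grading (coarsest coarsening of Cartan). If $\dim\cC_e = 4$, then $\cC_e = \cQ$ is a quaternion subalgebra and $\cQ^\perp = \cQ u$ is a $\cQ$-bimodule in which each nonidentity $\cC_g$ is a sub-bimodule. In the split quaternion case the sub-bimodules correspond to right ideals of $\Mat_2(\FF)$, so a priori the grading could have up to two nonzero nonidentity components; however, combining $\cC_g\cC_g \subseteq \cC_{g^2}$ with the explicit Cayley--Dickson formula $(qu)(q'u) = -\alpha\bar{q'}q$ rules out the two-component configuration, leaving a single nonidentity component with $g$ of order $2$: this is the $\bZ_2$-grading $\CD(\cQ,\alpha)$. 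If $\dim\cC_e = 2$, then $\cK := \cC_e$ is étale quadratic; over $\bFalg$ it becomes $\FF e_1 \oplus \FF e_2$ for orthogonal idempotents fixed pointwise by $\Diags(\Gamma)$. Hence $\Diags(\Gamma)$ embeds into the centralizer of $\{e_1,e_2\}$ in $\bAut(\cC)$, which acts faithfully on the Peirce spaces $V := e_1\cC e_2$ and $V^* := e_2\cC e_1$ preserving the natural pairing $V\otimes V^*\to\FF$ and the wedge map $\bigwedge^2 V \to V^*$, and therefore equals $\SLs_3$. Any diagonalizable subgroupscheme of $\SLs_3$ lies in a maximal torus, and the grading on $\cC$ induced by such a torus is precisely the Cartan grading; thus $\Gamma$ is a coarsening of the Cartan grading, and Galois descent returns to arbitrary $\FF$.

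The case $\dim\cC_e = 1$ is the most delicate. Here $\cC_e = \FF 1$ and the goal is a $\bZ_2^3$-grading from iterated Cayley--Dickson. A parity argument produces the first doubling element: components $\cC_g$ with $g^2 \ne e$ are totally isotropic (from $x^2 = -n(x)1 \in \cC_{g^2}\cap\cC_e = 0$, together with the vanishing of $xy + yx$ on such components) and occur in equal-dimensional pairs $\cC_g,\cC_{g^{-1}}$ via the $n$-duality, so their total dimension is even; since the sum of all nonidentity component dimensions is $7$, at least one order-$2$ component carries an anisotropic element $u_1 \in \cC_{g_1}$. Then $\FF 1 \oplus \FF u_1$ is a graded quadratic étale subalgebra, and iterating the argument inside its graded orthogonal complement yields a graded quaternion subalgebra $\cQ = \CD(\cK,\beta)$; a final iteration produces $\cC = \CD(\cQ,\alpha)$ as a graded Cayley--Dickson doubling whose $\bZ_2^3$-grading refines $\Gamma$. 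Since the refinement shares the minimal identity component $\FF 1$, it coincides with $\Gamma$.

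The principal obstacle lies in the last case: carrying out each doubling step while maintaining homogeneity demands careful bookkeeping on the successive graded orthogonal complements, particularly verifying at each stage that the parity/isotropy count yields a homogeneous anisotropic element of appropriate order-two degree. A secondary subtlety, noted above, is the $\dim\cC_e = 4$ case with split $\cQ$, where the reduction to a single nonzero nonidentity component must invoke the multiplicative structure rather than bimodule theory alone; a third mild technicality is the descent from $\bFalg$ to $\FF$ in the intermediate cases, which uses that the split Cayley algebra and its Cartan grading are defined over the prime field.
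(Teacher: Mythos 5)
Your strategy (a case analysis on $\dim\cC_e$) differs from the paper's, which instead splits on whether the subgroup generated by the support is $2$-elementary; the strategy is legitimate, but two of your cases reach false conclusions. The clearest error is in the case $\dim\cC_e=4$: the ``two-component configuration'' you claim to rule out actually occurs. The $3$-grading of Theorem \ref{th:Hurwitzgradings}(3), with $\cC_0=\espan{e_1,e_2,u_3,v_3}$ (a split quaternion subalgebra), $\cC_1=\espan{u_1,v_2}$ and $\cC_{-1}=\espan{u_2,v_1}$, has a $4$-dimensional identity component and \emph{two} nonzero nonidentity components. Your argument via $\cC_g\cC_g\subset\cC_{g^2}$ and the doubling formula gives no contradiction here, because each $\cC_{\pm1}$ is totally isotropic, so $\cC_{\pm1}^2=0$ and no constraint on $g^2$ results; the correct conclusion in this subcase is that the grading is a coarsening of the Cartan grading, not a $\bZ_2$-grading. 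A second error is in the case $\dim\cC_e=2$: the theorem is over an arbitrary field, and a $\bZ_2^2$-grading on a division Cayley algebra (e.g.\ on $\bO=\CD(\bR,-1,-1,-1)$), or on a split one with $\cC_e$ a quadratic field extension, has $\dim\cC_e=2$ but is not equivalent to a coarsening of the Cartan grading (whose identity component contains the idempotents $e_1,e_2$); it belongs to the Cayley--Dickson bucket. Your $\SLs_3$/maximal-torus argument is fine over $\bFalg$, but no ``Galois descent'' can return you to a Cartan coarsening over $\FF$ in these situations, so this case must also admit the Cayley--Dickson outcome.

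There is also a gap exactly where you anticipated one, in the case $\dim\cC_e=1$: the parity count works at the first step because $7$ is odd, but at the second step $\cK^\perp$ has dimension $6$, which is even, so the same count no longer produces an anisotropic homogeneous element of order-two degree. The repair is to show that $\cC_e=\FF 1$ forces the support to be $2$-elementary: if $\cC_g\ne0$ with $g^2\ne e$, pick $x\in\cC_g$, $y\in\cC_{g^{-1}}$ with $n(x,y)\ne0$; since $n(x,1)=n(y,1)=0$, linearizing the Cayley--Hamilton equation gives $xy+yx=-n(x,y)1\ne0$, and both $xy$ and $yx$ lie in $\cC_e=\FF1$, so one of them is a nonzero scalar, whence $n(x)n(y)\ne0$, contradicting $n(\cC_g)=0$. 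Once the support is $2$-elementary, every nonzero component carries a nondegenerate norm and the doubling iterates with no parity bookkeeping. This is precisely the dichotomy the paper uses from the outset: in the $2$-elementary case it runs the Cayley--Dickson iteration directly (for any $\dim\cC_e$), and in the non-$2$-elementary case it uses an isotropic homogeneous element to build a homogeneous ``good basis'' and exhibit the grading as a coarsening of the Cartan grading.
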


\begin{remark}
The number of non-equivalent gradings induced by the Cayley--Dickson doubling process depends on the ground field. Actually, the number of non equivalent $\bZ_2$-gradings coincides with the number of isomorphism classes of quaternion subalgebras $\cQ$ of the Cayley algebra.

For an algebraically closed field $\bF$, this is one. Over $\bR$ there are two non isomorphic Cayley algebras, the classical division algebra of the octonions $\bO=\CD(\bR,-1,-1,-1)$ and the split Cayley algebra $\bO_s=\CD(\bR,1,1,1)$. Any quaternion subalgebra of $\bO$ is isomorphic to $\bH=\CD(\bR,-1,-1)$, while $\bO_s$ contains quaternion subalgebras isomorphic  to $\bH$ and to $M_2(\bR)$.

On the other hand, for $p,q$ prime numbers congruent to $3$ modulo $4$, it is easy to check that the quaternion subalgebras $\cQ_p=\CD\bigl(\bQ(\bi),p\bigr)$ and $\cQ_q=\CD\bigl(\bQ(\bi),q\bigr)$ are not isomorphic ($\bi^2=-1$). Consider the division algebra $\cQ=\CD\bigl(\bQ(\bi),-1\bigr)$. The split Cayley algebra over $\bQ$ is isomorphic to $\cC=\CD(\cQ,1)$,  and by the classical \emph{Four Squares Theorem}, $\cQ^\perp$ contains elements whose norm is $-p$ for any prime number $p$. Therefore $\cC$ contains a quaternion subalgebra isomorphic to $\cQ_p$ for any prime number $p$, and hence the split Cayley algebra over $\bQ$ is endowed with infinitely many non-equivalent $\bZ_2$-gradings.

Over an algebraically closed field there is a unique $\bZ_2^r$-grading, up to equivalence, for any $r=1,2,3$. Over $\bR$, $\bO$ is endowed with a unique $\bZ_2^r$-grading ($r=1,2,3$) up to equivalence, while $\bO_s$ is endowed with two non equivalent $\bZ_2$ and $\bZ_2^2$-gradings, but a unique $\bZ_2^3$-grading. \qed
\end{remark}

\smallskip

Up to symmetry, any coarsening of the Cartan grading is obtained as follows (with $g_i=\deg(u_i)$, $i=1,2,3$):
\begin{description}
\item[$\boxed{g_1=0}$] Then we obtain a ``$3$-grading'' by $\bZ$:
    $\cC=\cC_{-1}\oplus \cC_0\oplus \cC_1$, with $\cC_0=\espan{e_1,e_2,u_1,v_1}$,
    $\cC_1=\espan{u_2,v_3}$, $\cC_{-1}=\espan{u_3,v_2}$. All proper coarsenings have a $2$-elementary grading group.

\item[$\boxed{g_1=g_2}$] Here we obtain a ``$5$-grading'' by $\bZ$, with
    $\cC_{-2}=\bF{u_3}$, $\cC_{-1}=\espan{v_1,v_2}$, $\cC_0=\espan{e_1,e_2}$,
    $\cC_1=\espan{u_1,u_2}$ and $\cC_2=\bF{v_3}$, which has two proper coarsenings whose grading groups are not $2$-elementary:
    \begin{description}
    \item[$\boxed{g_1=g_2=g_3}$] This gives a $\bZ_3$-grading with $\cC\subo=\espan{e_1,e_2}$, $\cC\subuno=\espan{u_1,u_2,u_3}$, $\cC_{\bar 2}=\espan{v_1,v_2,v_3}$.
    \item[$\boxed{g_3=-g_3}$] This gives a $\bZ_4$-grading.
    \end{description}

\item[$\boxed{g_1=-g_1}$] Here we get a $\bZ\times\bZ_2$-grading
\[
\begin{aligned}
\cC&=\cC_{(0,\bar 0)}&&\hspace{-24pt}\oplus\ \cC_{(1,\bar 0)}
&&\hspace{-6pt}\oplus\ \cC_{(-1,\bar 0)}
&&\hspace{-6pt}\oplus\ \cC_{(0,\bar 1)}
&&\hspace{-24pt}\oplus\ \cC_{(-1,\bar 1)}
&&\hspace{-6pt}\oplus\ \cC_{(1,\bar 1)}\\[-2pt]
&\qquad\shortparallel&&\shortparallel&&\qquad \shortparallel&&\quad\ \shortparallel&&\ \shortparallel&&\qquad \shortparallel\\[-2pt]
&\espan{e_1,e_2}&&\bF{u_2}&&\qquad \bF{v_2}&&
\hspace{-4pt}\espan{u_1,v_1}&&\ \bF u_3&&\qquad \bF v_3
\end{aligned}
\]
Any of its coarsenings is a coarsening of the previous gradings.

\item[$\boxed{g_1=-g_2}$] In this case $g_3=0$, and this is equivalent to the grading obtained with $g_1=0$.
\end{description}

\smallskip

Thus the next result follows:

\begin{theorem}[\cite{ElduqueOctonions}]\label{th:Hurwitzgradings}
Up to equivalence, the nontrivial abelian group gradings on the split Cayley algebra are:
\begin{enumerate}
\item The $\bZ_2^r$-gradings induced by the Cayley--Dickson doubling process, $r=1,2,3$.
\item The Cartan grading by $\bZ^2$.
\item The $3$-grading: $\cC_0=\espan{e_1,e_2,u_3,v_3}$, $\cC_1=\espan{u_1,v_2}$, and $\cC_{-1}=\espan{u_2,v_1}$.
\item The $5$-grading: $\cC_0=\espan{e_1,e_2}$, $\cC_1=\espan{u_1,u_2}$, $\cC_2=\espan{v_3}$, $\cC_{-1}=\espan{v_1,v_2}$, and $\cC_{-2}=\espan{u_3}$.
\item The $\bZ_3$-grading: $\cC\subo=\espan{e_1,e_2}$, $\cC\subuno=\espan{u_1,u_2,u_3}$, and $\cC_{\bar 2}=\espan{v_1,v_2,v_3}$.
\item The $\bZ_4$-grading: $\cC\subo=\espan{e_1,e_2}$, $\cC\subuno=\espan{u_1,u_2}$, $\cC_{\bar 2}=\espan{u_3,v_3}$, and $\cC_{\bar 3}=\espan{v_1,v_2}$.
\item The $\bZ\times\bZ_2$-grading.\qed
\end{enumerate}
\end{theorem}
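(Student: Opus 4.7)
The plan is to read this theorem as the assembly of the discussion just above it: Theorem \ref{th:grOctonions} has already reduced the problem to two sources of gradings, and then a case analysis enumerates all possibilities up to the evident symmetries.

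First I would invoke Theorem \ref{th:grOctonions} to split any nontrivial abelian group grading into (a) one induced by the Cayley--Dickson doubling process, which accounts directly for item~(1) (three possibilities $r=1,2,3$, each unique over an algebraically closed field as noted in the preceding remark), or (b) a coarsening of the Cartan $\bZ^2$-grading, which includes item~(2) itself. The rest of the proof would deal exclusively with case~(b) and show that every proper coarsening of the Cartan grading is equivalent to one of items~(3)--(7), or else to a Cayley--Dickson grading from~(1).

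For the enumeration I would exploit the natural symmetries of the Cartan grading. Setting $g_i = \deg(u_i)$, the support is $\{0, \pm g_1, \pm g_2, \pm g_3\}$ with $g_1+g_2+g_3=0$, mirroring the root system of $\frsl_3$. Permutations of the indices $\{1,2,3\}$ and the simultaneous swap $u_i\leftrightarrow v_i$ (which negates all $g_i$) are realized by automorphisms of the split Cayley algebra and hence lie in the Weyl group of the Cartan grading, so they preserve equivalence classes of coarsenings. Any proper coarsening corresponds to at least one nontrivial relation among $g_1,g_2,g_3$ in the universal group $\bZ^2$; modulo the symmetry just described, such a relation can be normalized to be one of $g_1=0$, $g_1=g_2$, $2g_1=0$, or $g_1=-g_2$. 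The last reduces to the first since $g_1=-g_2$ forces $g_3=0$. Each of the remaining three cases is precisely what is analyzed in the boxed itemization preceding the theorem, and produces items (3), (4)--(6), and (7) respectively, with the further coarsenings inside the $g_1=g_2$ case giving the $\bZ_3$- and $\bZ_4$-gradings (5) and (6). Any additional coarsening forces the universal group to become $2$-elementary, and at this point I would observe that such a coarsening must agree with a $\bZ_2^r$-grading obtained from a Cayley--Dickson doubling (using, for instance, that in the split Cayley algebra all quaternion subalgebras are isomorphic to $\mathrm{M}_2(\bF)$ and all admissible doublings are equivalent), so it already appears in item~(1).

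Finally I would verify pairwise non-equivalence of the seven items on the list. Most pairs are distinguished already by the universal group or by the type (the dimensions of the homogeneous components): items (3)--(7) have pairwise distinct universal groups, and they are distinguished from item~(1) because their homogeneous components are not all one-dimensional and their universal groups are torsion-free or infinite cyclic extensions (or $\bZ_3, \bZ_4$), none of which is $2$-elementary; within item~(1) the three values of $r$ give different types $(2^r, 2^{3-r} \cdot \text{stuff})$. The single delicate check will be the identification of the coarser $2$-elementary refinements of items (3) and (7) with Cayley--Dickson gradings, and this I expect to be the main obstacle: one must match the ad hoc coarsenings of the Cartan grading with canonical doublings $\CD(\bF,\gamma,\beta,\alpha)$ by exhibiting compatible quadratic, quaternion, or four-dimensional nondegenerate subalgebras inside the graded pieces. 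Once this identification is made, the list (1)--(7) is both exhaustive and irredundant, proving the theorem.
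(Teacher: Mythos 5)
Your proposal follows essentially the same route as the paper: the paper's proof of this theorem is exactly the reduction via Theorem \ref{th:grOctonions} followed by the boxed case analysis of relations among $g_1,g_2,g_3$ up to the symmetries of the Cartan grading (the text immediately preceding the statement), after which it simply says ``Thus the next result follows.'' Your normalization of the possible relations, the disposal of the $2$-elementary coarsenings into the Cayley--Dickson family, and the non-equivalence checks via universal groups and types all match what the paper does or leaves as routine.
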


In particular, over an algebraically closed field, there are $9$ equivalence classes of nontrivial gradings on the (unique) Cayley algebra.

\begin{corollary}\label{fine_gradings_C}
Let $\Gamma$ be a fine abelian group grading on the Cayley algebra $\cC$ over an algebraically closed field $\FF$. Then $\Gamma$ is equivalent either to the Cartan grading or to the $\bZ_2^3$-grading induced by the Cayley--Dickson doubling process. The latter grading does not occur if $\chr{\FF}=2$.\qed
\end{corollary}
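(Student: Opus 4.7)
The plan is to apply Theorem \ref{th:Hurwitzgradings}, which lists all seven types of non-trivial abelian group gradings on $\cC$ up to equivalence, and to determine directly which of these are fine.

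The $\bZ_2^3$-grading from the Cayley--Dickson doubling process has all eight homogeneous components one-dimensional, hence is trivially fine; note that this grading exists only when $\chr{\FF}\ne 2$, since constructing it requires doubling $\FF$ to get a two-dimensional étale subalgebra. For the Cartan grading, six of the seven components are one-dimensional and the remaining one is $\cC_{(0,0)}=\FF e_1\oplus\FF e_2$. To show fineness I would argue as follows: given any refinement $\Gamma'$ with grading group $H$, the subspace $\cC_{(0,0)}$ is graded by $\Gamma'$, and each one-dimensional Cartan component $\FF u_i$, $\FF v_i$ is automatically homogeneous in $\Gamma'$. For any $a\in\cC_{(0,0)}$, the product $a\cdot u_i\in\FF u_i$ is a scalar multiple of $u_i$, so if $a\cdot u_i\ne 0$ then $\deg_{\Gamma'}(a)=0$. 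Since the only $a\in\cC_{(0,0)}$ annihilating every $u_i$ and $v_i$ is zero, we conclude $\cC_{(0,0)}\subset\cC'_0$, so $\Gamma'$ coincides with Cartan. In particular, the natural looking attempt to refine $\cC_{(0,0)}=\FF 1\oplus\FF(e_1-e_2)$ in characteristic different from $2$ fails because $(e_1-e_2)\cdot u_1=u_1$ would force $e_1-e_2$ to have degree zero.

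For the remaining gradings in the list, I would verify that each is a proper coarsening of one of the two fine gradings above. Types (3)--(7) (the $3$-grading, $5$-grading, $\bZ_3$-grading, $\bZ_4$-grading and $\bZ\times\bZ_2$-grading) are displayed as coarsenings of Cartan in the case analysis immediately preceding Theorem \ref{th:Hurwitzgradings}. For the $\bZ_2^r$-gradings from Cayley--Dickson with $r=1,2$, I would make an explicit identification with a coarsening of Cartan, valid uniformly in characteristic. Taking, in a good basis, $\cK=\FF e_1\oplus\FF e_2$, $v=u_1-v_1\in\cK^\perp$ with $v^2=1$, and $u=u_2-v_2\in\cQ^\perp$ with $u^2=1$, a direct computation in the multiplication table of Figure \ref{fig:Cayley} yields $\cK v=\FF u_1\oplus\FF v_1$, $\cK u=\FF u_2\oplus\FF v_2$, and $\cK vu=\FF u_3\oplus\FF v_3$. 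Hence the $\bZ_2^2$-grading $\cC=\cK\oplus\cK v\oplus\cK u\oplus\cK vu$ is precisely the coarsening of the Cartan $\bZ^2$-grading under the reduction homomorphism $\bZ^2\to\bZ_2^2$ modulo $2$; the $\bZ_2$-grading is a further coarsening.

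The only mildly delicate point is ensuring the $\bZ_2^2$-grading in characteristic $2$ is still accounted for (since the $\bZ_2^3$-grading is unavailable there), but the identification with a Cartan coarsening above is characteristic-free and so handles this case as well. Combining the three paragraphs then gives the desired dichotomy.
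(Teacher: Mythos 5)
Your proposal is correct and follows essentially the same route as the paper, which deduces the corollary directly from the classification in Theorem \ref{th:Hurwitzgradings} (and the remark that the Cartan grading is fine as a group grading): one simply checks that the $\bZ_2^3$-grading is fine because all its components are one-dimensional, that the Cartan grading admits no proper abelian group refinement, and that every other grading on the list --- including the $\bZ_2$- and $\bZ_2^2$-gradings, via the reduction $\bZ^2\to\bZ_2^2$ you exhibit --- is a proper coarsening of one of these two. Your explicit verification that the $\bZ_2^r$-gradings for $r=1,2$ are coarsenings of the Cartan grading by a characteristic-free computation is a welcome detail, since it is what guarantees the statement in characteristic $2$, where the $\bZ_2^3$-grading is unavailable.
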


\smallskip

Let $G$ be an abelian group. Assuming $\FF$ algebraically closed, we can classify all $G$-gradings on $\cC$ up to isomorphism.
Let $\Gamma_\cC^1$ be the Cartan grading and let $\Gamma_\cC^2$ be the $\ZZ_2^3$-grading induced by the Cayley--Dickson doubling process.
We will need the following result:

\begin{theorem}[\cite{EK_Weyl}]
Identifying $\supp\Gamma_\cC^1$ with the short roots of the root system $\Phi$ of type $G_2$, we have $\W(\Gamma_\cC^1)=\Aut\Phi$, $\W(\Gamma_\cC^2)=\Aut(\ZZ_2^3)$.\qed
\end{theorem}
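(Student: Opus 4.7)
The plan is to establish each identification by proving two inclusions. In both cases, the upper bound follows from the general principle that $\W(\Gamma)$ embeds in $\Aut(U(\Gamma))$ and preserves $\supp\Gamma$. For $\Gamma_\cC^1$, identifying $U(\Gamma_\cC^1)$ with $\ZZ^2$, the seven support elements are $0$ and $\pm(1,0), \pm(0,1), \pm(1,1)$; a direct enumeration (six choices for the image of $(1,0)$ and, for each, exactly two compatible choices for the image of $(0,1)$ that keep $(1,1)$ in the set and yield determinant $\pm 1$) shows that the stabilizer in $\GL_2(\ZZ)$ has order $12$. Under the identification of the six nonzero elements with the short roots of the $G_2$ root system, this stabilizer is precisely $\Aut\Phi$ (noting that the six short roots form an $A_2$-subsystem and $\Aut\Phi$ for $G_2$, which coincides with $W(G_2)$, has order $12$ and acts faithfully on them). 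For $\Gamma_\cC^2$, the support exhausts $\ZZ_2^3$, so the upper bound $\W(\Gamma_\cC^2)\subseteq \Aut(\ZZ_2^3) = \GL_3(\FF_2)$ is immediate.

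For the Cartan grading, I would realize the full $\Aut\Phi$ by exhibiting three explicit self-equivalences. First, the involution swapping $e_1\leftrightarrow e_2$ and $u_i\leftrightarrow v_i$ ($i=1,2,3$), readily checked against Figure \ref{fig:Cayley} to be an algebra automorphism, induces $-\id$ on $\ZZ^2$. Second, the transposition $u_1\leftrightarrow u_2$, $v_1\leftrightarrow v_2$, $u_3\mapsto -u_3$, $v_3\mapsto -v_3$ (the sign changes are forced by $u_1 u_2 = v_3 = -u_2 u_1$) induces the coordinate swap on $\ZZ^2$. Third, the cyclic shift $u_i\mapsto u_{i+1}$, $v_i\mapsto v_{i+1}$ (indices mod $3$), fixing $e_1, e_2$, yields an order-$3$ element. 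The subgroup of $\Aut(\ZZ^2)$ they induce contains an element of order $6$ (the product of the first and third) together with an element of determinant $-1$, hence has order at least $12$, which matches the upper bound.

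For the $\ZZ_2^3$-grading, since $\chr\FF\ne 2$ and $\FF$ is algebraically closed, each of the seven nonzero homogeneous components is one-dimensional and spanned by an element $w_g$ with $w_g^2 \in \FF^\times\cdot 1$ (because $w_g$ has trace $0$ and the squaring map sends $\cC_g$ into $\cC_{2g} = \cC_0 = \FF\cdot 1$). Given $\sigma\in\GL_3(\FF_2)$, I would pick nonzero $w_i'\in\cC_{\sigma(\epsilon_i)}$ for $i=1,2,3$, rescaled so that $(w_i')^2 = w_i^2$ (possible because $\FF$ is algebraically closed). If the triple $(w_1', w_2', w_3')$ is a valid Cayley--Dickson doubling triple, the universal property of the iterated doubling construction yields a unique automorphism of $\cC$ sending $w_i\mapsto w_i'$ and inducing $\sigma$ on the grading group.

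The main obstacle, beyond bookkeeping, is to verify the Cayley--Dickson conditions on $(w_1', w_2', w_3')$ despite the non-associativity of $\cC$: each $w_i'$ must be norm-orthogonal to $1$ and to the subalgebra generated by the previous elements. All such orthogonalities are automatic from $\ZZ_2^3$-homogeneity: the norm form has degree $0$ in $\ZZ_2^3$, so $n(\cC_g, \cC_h) = 0$ unless $g=h$ (using $2g=0$), while the invertibility of $\sigma$ guarantees that $\sigma(\epsilon_1), \sigma(\epsilon_2), \sigma(\epsilon_3), \sigma(\epsilon_1+\epsilon_2)$ are pairwise distinct and nonzero, yielding every required orthogonality. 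This reduces the lower bound to the classical fact that Cayley--Dickson generators are free subject only to their defining relations.
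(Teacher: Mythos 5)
This theorem is quoted in the paper from the reference [EK] (\emph{Weyl groups of fine gradings\dots}) and carries no proof here, so there is no internal argument to compare against; judged on its own, your proof is correct and self-contained. The overall structure (upper bound from the embedding $\W(\Gamma)\hookrightarrow\Aut(U(\Gamma))$ preserving $\Supp\Gamma$, lower bound from explicit self-equivalences) is the natural one, and your three automorphisms for the Cartan grading are exactly the ones the paper itself uses elsewhere ($\sigma$ of \eqref{eq:sigma2C}, the permutation-with-signs, and the order-$3$ map $\tau$ of \S\ref{ss:Z33}), so that half could alternatively be dispatched by the remark in Section \ref{se:gradings} identifying $\W(\Gamma)$ with $N(T)/C(T)=W(G_2)=\Aut\Phi$ for the maximal torus $T$ whose eigenspace decomposition is $\Gamma_\cC^1$ --- your hexagon count of $12$ and the hands-on generators buy you independence from algebraic-group facts at the cost of a little bookkeeping. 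For the $\ZZ_2^3$ case your reduction of all orthogonality conditions to the injectivity of $\sigma$ is the right key observation. Two points are elided but standard and worth a line each in a written version: (i) the homogeneity of the polar form, $n(\cC_g,\cC_h)=0$ unless $gh=e$, which follows from $n(x,y)1=x\bar y+y\bar x$ together with $\bar\cC_g=\cC_g$ (or from the fact that the components are common eigenspaces of norm-preserving automorphisms); and (ii) the extension theorem guaranteeing that an isometric correspondence of Cayley--Dickson generating triples extends to an algebra automorphism, which is the standard uniqueness statement for the doubling process in \cite[Chapter VIII]{KMRT} or \cite[Chapter 2]{ZSSS}. Neither is a genuine gap.
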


To state our classification theorem, we introduce the following notation:

$\bullet$ Let $\gamma=(g_1,g_2,g_3)$ be a triple of elements in $G$ with $g_1g_2g_3=e$. Denote by $\Gamma^1_\cC(G,\gamma)$ the $G$-grading on $\cC$ induced from $\Gamma_\cC^1$ by the homomorphism $\ZZ^2\to G$ sending $(1,0)$ to $g_1$ and $(0,1)$ to $g_2$. In other words, we set $\deg e_j=e$, $j=1,2$, $\deg u_i=g_i$ and $\deg v_i=g_i^{-1}$, $i=1,2,3$, for some ``good basis'' of $\cC$. For two such triples, $\gamma$ and $\gamma'$, we will write $\gamma\sim\gamma'$ if there exists $\pi\in\sg(3)$ such that $g'_i=g_{\pi(i)}$ for all $i=1,2,3$ or $g'_i=g^{-1}_{\pi(i)}$ for all $i=1,2,3$.

$\bullet$ Let $H\subset G$ be a subgroup isomorphic to $\ZZ_2^3$. Then $\Gamma_\cC^2$ may be regarded as a $G$-grading with support $H$. We denote this $G$-grading by $\Gamma^2_\cC(G,H)$. (Since $\W(\Gamma_\cC^2)=\Aut(\ZZ_2^3)$, all induced gradings ${}^\alpha\Gamma_\cC^2$ for various isomorphisms $\alpha\colon\ZZ_2^3\to H$ are isomorphic, so $\Gamma_\cC^2(G,H)$ is well-defined.)

\begin{theorem}\label{th:gradings_C_iso}
Let $\cC$ be the Cayley algebra over an algebraically closed field and let $G$ be an abelian group. Then any $G$-grading on $\cC$ is isomorphic to some $\Gamma^1_\cC(G,\gamma)$ or $\Gamma_\cC^2(G,H)$, but not both. Also,
\begin{itemize}
\item $\Gamma^1_\cC(G,\gamma)$ is isomorphic to $\Gamma^1_\cC(G,\gamma')$ if and only if $\gamma\sim\gamma'$;

\item $\Gamma^2_\cC(G,H)$ is isomorphic to $\Gamma^2_\cC(G,H')$ if and only if $H=H'$.
\end{itemize}
\end{theorem}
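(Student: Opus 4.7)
The plan is to combine Propositions~\ref{duality_char_any} and~\ref{maximal_diag_gs} with Corollary~\ref{fine_gradings_C} and the Weyl group computation from the preceding theorem. Given any $G$-grading $\Gamma$ on $\cC$, the morphism $\eta_\Gamma\colon G^D\to\AAut(\cC)$ has diagonalizable image, contained in a maximal diagonalizable subgroupscheme of $\AAut(\cC)$. By Corollary~\ref{fine_gradings_C} and Proposition~\ref{maximal_diag_gs}, every such maximal subgroupscheme is $\Aut(\cC)$-conjugate either to $T\bydef\Diags(\Gamma_\cC^1)$ or to $\Qs\bydef\Diags(\Gamma_\cC^2)$. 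By Proposition~\ref{duality_char_any}, this conjugation corresponds to replacing $\Gamma$ by an isomorphic $G$-grading, so we may assume $\eta_\Gamma$ factors through $T$ or through $\Qs$. This gives $\Gamma\cong{}^\alpha\Gamma_\cC^1$ for some $\alpha\colon\ZZ^2\to G$, or $\Gamma\cong{}^\beta\Gamma_\cC^2$ for some $\beta\colon\ZZ_2^3\to G$.

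In the first case, setting $g_i=\alpha(\deg u_i)$, we have $\Gamma\cong\Gamma_\cC^1(G,\gamma)$ with $\gamma=(g_1,g_2,g_3)$. In the second case, if $\beta$ is injective then $\Gamma\cong\Gamma_\cC^2(G,H)$ with $H=\beta(\ZZ_2^3)$; if $\beta$ is not injective, then $\Diags(\Gamma)$ is a proper subgroupscheme of $\Qs$ of rank at most $2$, and a group-theoretic argument in the smooth reductive group $\AAut(\cC)$ of type $G_2$ shows that every such diagonalizable subgroupscheme is $\Aut(\cC)$-conjugate to one contained in $T$; hence again $\Gamma\cong\Gamma_\cC^1(G,\gamma)$. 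The ``not both'' disjointness follows from the degree-$e$ component: for $\Gamma_\cC^1(G,\gamma)$ this component contains $\bF e_1\oplus\bF e_2$ and so has dimension at least $2$, while for $\Gamma_\cC^2(G,H)$ it equals $\bF\cdot 1$.

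For the isomorphism criteria within each family, the principle (obtained from Proposition~\ref{duality_char_any}) is that two coarsenings ${}^\alpha\Gamma_0,{}^{\alpha'}\Gamma_0$ of a fine grading $\Gamma_0$ via $\alpha,\alpha'\colon U(\Gamma_0)\to G$ are isomorphic as $G$-gradings if and only if $\alpha'=\alpha\circ w$ for some $w\in\W(\Gamma_0)$. For $\Gamma_\cC^1$, the Weyl group $\W(\Gamma_\cC^1)=\Aut\Phi$ of type $G_2$ acts on the three pairs $\{\pm g_i\}$ of opposite short roots as $\sg(3)\times\ZZ_2$ (permutations of the three pairs together with the global sign flip), which is precisely the relation $\sim$ on triples with $g_1g_2g_3=e$. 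For $\Gamma_\cC^2$, the Weyl group $\Aut(\ZZ_2^3)$ acts transitively on the isomorphisms $\ZZ_2^3\to H$ for fixed $H\cong\ZZ_2^3$, so the isomorphism class of $\Gamma_\cC^2(G,H)$ depends only on $H=\Supp\Gamma_\cC^2(G,H)$.

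The main obstacle is the careful justification of the Weyl-orbit criterion for isomorphism of coarsenings: one must argue that any $\Aut(\cC)$-conjugation carrying $\eta_{\Gamma_0}\circ\alpha^D$ to $\eta_{\Gamma_0}\circ(\alpha')^D$ can be adjusted, by composing with an element of the centralizer of the common image, so as to normalize $\Diags(\Gamma_0)$; this uses the standard conjugacy results for tori and their normalizers in the reductive group of type $G_2$ for $\Gamma_\cC^1$, and the finite self-normalizing structure of $\Qs$ together with $N(\Qs)/\Qs\cong\Aut(\ZZ_2^3)$ for $\Gamma_\cC^2$.
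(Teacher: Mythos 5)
Your proof is correct in outline, but it reaches the key point --- the ``only if'' direction of $\Gamma^1_\cC(G,\gamma)\cong\Gamma^1_\cC(G,\gamma')\Rightarrow\gamma\sim\gamma'$ --- by a genuinely different route than the paper. The paper argues directly and elementarily: an isomorphism $\vphi$ must match up the identity components, which are composition subalgebras; it then splits into cases according to whether $\cC_e$ is $\cC$, $M_2(\FF)$ or $\FF\times\FF$, and in the last case observes that $\vphi$ either fixes or swaps $e_1,e_2$ and hence preserves or swaps the subspaces $\cU$ and $\cV$, reading off the permutation/inversion of $(g_1,g_2,g_3)$ from supports and dimensions. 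You instead invoke a general Weyl-orbit criterion for coarsenings of a fine grading; as you rightly flag, that criterion is not automatic and is exactly where the work lies. Its justification in the toral case (adjusting the conjugating automorphism by an element of the stabilizer of the coarsened grading, using conjugacy of maximal tori in that stabilizer) is precisely what the paper proves \emph{after} this theorem as Proposition \ref{pr:isomorphism_Cartan}, where it also remarks that the theorem's first bullet is a special case. So your route front-loads the group-scheme machinery and gains uniformity and generality (it is the argument that scales to the Albert algebra and to $F_4$), at the cost of needing the toral conjugacy lemma that the paper's direct argument avoids at this stage; note also that for the $\Gamma^2_\cC$ family you do not actually need the hard direction of your criterion, since the support alone forces $H=H'$ and only the ``if'' direction (realizing $\Aut(\ZZ_2^3)=\W(\Gamma^2_\cC)$ by automorphisms) is used to make $\Gamma^2_\cC(G,H)$ well defined --- which is how the paper handles it as well. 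The remaining steps (reduction to coarsenings of the two fine gradings, absorption of the non-injective $\ZZ_2^3\to G$ case into the Cartan family, and the $\dim\cC_e$ disjointness argument) coincide with the paper's, and your unproved assertion that rank $\le 2$ diagonalizable $2$-subgroups are toral in the simply connected group of type $G_2$ is true and is left at essentially the same level of detail in the paper's own proof.
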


\begin{proof}
It follows from Corollary \ref{fine_gradings_C} that any $G$-grading is isomorphic to ${}^\alpha\Gamma^1_\cC$ for some $\alpha\colon\ZZ^2\to G$ or to ${}^\alpha\Gamma_\cC^2$ for some $\alpha\colon\ZZ_2^3\to G$. In the second case, if $\alpha$ is not one-to-one, then ${}^\alpha\Gamma_\cC^2$ is isomorphic to some ${}^\beta\Gamma^1_\cC$. $\Gamma^1_\cC(G,\gamma)$ and $\Gamma_\cC^2(G,T)$ cannot be isomorphic, because in the first case $\dim\cC_e\geq 2$ and in the second case $\dim\cC_e=1$.

If $\gamma\sim\gamma'$, then there is an automorphism in $\Aut(\Gamma_\cC^1)$ that sends $\Gamma^1_\cC(G,\gamma)$ to $\Gamma^1_\cC(G,\gamma')$. Conversely, if $\vphi$ is an automorphism of $\cC$ sending $\Gamma^1_\cC(G,\gamma)$ to $\Gamma^1_\cC(G,\gamma')$, then, in particular, $\vphi$ maps $\cC_e$ onto $\cC'_e$. If $\cC_e=\cC$, there is nothing to prove. Otherwise $\cC_e$ is isomorphic to $M_2(\FF)$ or $\FF\times\FF$, because it is a composition subalgebra of $\cC$ (alternatively, one may examine the cases in Theorem \ref{th:Hurwitzgradings}). If $\cC_e$ is isomorphic to $M_2(\FF)$, then one of $g_i$ is $e$. Say, $g_3=e$ and hence $g_2=g_1^{-1}$. The support of the grading then consists of $e$ and $g_1^{\pm 1}$. Applying the same argument to $g'_i$, we see that $\gamma\sim\gamma'$. Finally, consider the case $\dim\cC_e=2$. Then $\cC_e=\cC'_e$, since both are spanned by the idempotents $e_1$ and $e_2$. Hence $\vphi$ either fixes $e_1$ and $e_2$ or swaps them. In the first case, $\vphi$ preserves the subspaces $\cU$ and $\cV$. Looking at the support of $\cU$ and the dimensions of the homogeneous components in $\cU$, we conclude that $(g'_1,g'_2,g'_3)$ must be a permutation of $(g_1,g_2,g_3)$. In the second case, $\vphi$ swaps $\cU$ and $\cV$ and we conclude that $(g'_1,g'_2,g'_3)$ must be a permutation of $(g^{-1}_1,g^{-1}_2,g^{-1}_3)$.

Since $H$ is the support of $\Gamma^2_\cC(G,H)$, an isomorphism between $\Gamma^2_\cC(G,H)$ and $\Gamma^2_\cC(G,H')$ forces $H=H'$.
\end{proof}

Note that $\gamma\sim\gamma'$ if and only if the corresponding homomorphisms $\ZZ^2\to G$ are conjugate by $\W(\Gamma_\cC^1)=\Aut\Phi$ in its action on the group $U(\Gamma_\cC^1)=\ZZ^2$. This is a special case of the following general result.\footnote{The authors would like to thank Prof. Reichstein, University of British Columbia, Canada, for a discussion that was instrumental in proving this result.}

\begin{proposition}\label{pr:isomorphism_Cartan}
Let $\cA$ be a finite-dimensional algebra over an algebraically closed field. Let $T$ be a maximal torus in  $\Aut(\cA)$. Let $G$ be an abelian group and let $\Gamma$ and $\Gamma'$ be $G$-gradings induced by homomorphisms $\alpha\colon\mathfrak{X}(T)\to G$ and $\alpha'\colon\mathfrak{X}(T)\to G$, respectively. Then $\Gamma'$ is isomorphic to $\Gamma$ if and only if there exists $w\in\W(T)$ such that $\alpha'(\lambda)=\alpha(\lambda^w)$ for all $\lambda\in\mathfrak{X}(T)$.
\end{proposition}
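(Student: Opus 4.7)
The plan is to translate the statement, via Proposition~\ref{duality_char_any}, into one about conjugacy of morphisms of affine group schemes into $\AAut(\cA)$, and then reduce the isomorphism to an element of $N(T)$ by invoking the conjugacy of maximal tori inside the centralizer of a diagonalizable subgroupscheme.

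Let $\Gamma_T\colon\cA=\bigoplus_{\lambda\in\mathfrak{X}(T)}\cA_\lambda$ denote the $T$-eigenspace decomposition, so that $\eta_{\Gamma_T}\colon T=\mathfrak{X}(T)^D\hookrightarrow\AAut(\cA)$ is the canonical inclusion. The hypotheses $\Gamma={}^\alpha\Gamma_T$ and $\Gamma'={}^{\alpha'}\Gamma_T$ yield $\eta_\Gamma=\eta_{\Gamma_T}\circ\alpha^D$ and $\eta_{\Gamma'}=\eta_{\Gamma_T}\circ(\alpha')^D$; in particular, the images of both morphisms lie in $T$. By Proposition~\ref{duality_char_any}, $\Gamma'$ is isomorphic to $\Gamma$ if and only if there exists $\vphi\in\Aut(\cA)$ with $\mathrm{Ad}_\vphi\circ\eta_\Gamma=\eta_{\Gamma'}$.

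For the sufficiency, define the right action of $\W(T)$ on $\mathfrak{X}(T)$ by $\lambda^w(t)=\lambda(\vphi t\vphi^{-1})$, where $\vphi\in N(T)$ represents $w$. The identity $t\cdot\vphi(a)=\vphi\bigl(\vphi^{-1}t\vphi\cdot a\bigr)$ applied to $a\in\cA_\lambda$ gives $\vphi(\cA_\lambda)=\cA_{\lambda^{w^{-1}}}$. Consequently $\vphi$ sends $\cA_g^\Gamma=\bigoplus_{\alpha(\lambda)=g}\cA_\lambda$ onto $\bigoplus_{\alpha(\mu^w)=g}\cA_\mu$, which coincides with $\cA_g^{\Gamma'}$ precisely under the hypothesis $\alpha'(\mu)=\alpha(\mu^w)$ for all $\mu\in\mathfrak{X}(T)$.

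For the necessity, let $\vphi\in\Aut(\cA)$ satisfy $\mathrm{Ad}_\vphi\circ\eta_\Gamma=\eta_{\Gamma'}$ and set $\Qs\bydef\eta_{\Gamma'}(G^D)\subset T$. Since the image of $\mathrm{Ad}_\vphi\circ\eta_\Gamma$ is also contained in $\vphi T\vphi^{-1}$, both $T$ and $\vphi T\vphi^{-1}$ are tori of $\Aut(\cA)$ containing $\Qs$, and, being maximal in $\Aut(\cA)$, they are maximal tori of $\Cent_{\Aut(\cA)}(\Qs)^\circ$. By the classical conjugacy of maximal tori in this connected algebraic group, there is $c\in\Cent_{\Aut(\cA)}(\Qs)^\circ$ with $c(\vphi T\vphi^{-1})c^{-1}=T$, i.e., $c\vphi\in N(T)$. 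Since $c$ centralizes $\Qs$, we have $\mathrm{Ad}_c\circ\eta_{\Gamma'}=\eta_{\Gamma'}$, so $c\vphi$ also implements the isomorphism $\Gamma\to\Gamma'$. Replacing $\vphi$ by $c\vphi$, the calculation of the previous paragraph, read in reverse, yields the required $w\in\W(T)$.

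The main obstacle is the reduction to an element of $N(T)$: one must verify that a maximal torus $T$ of $\Aut(\cA)$ which happens to contain the diagonalizable subgroupscheme $\Qs$ is also maximal in the connected identity component of $\Cent_{\Aut(\cA)}(\Qs)$. This is automatic because any torus of that centralizer is \emph{a fortiori} a torus of $\Aut(\cA)$ and so cannot strictly contain $T$, but it is precisely the step that relies on the algebraic closedness of the ground field in order to invoke conjugacy of maximal tori at the level of $\FF$-points.
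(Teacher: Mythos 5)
Your proof is correct and follows essentially the same route as the paper's: the centralizer $\Cent_{\Aut(\cA)}(\Qs)$ you introduce coincides with the stabilizer $\Stab(\Gamma')$ used in the paper, and in both arguments the key step is that $T$ and $\vphi T\vphi^{-1}$ are maximal tori of this common subgroup, hence conjugate there, which corrects $\vphi$ to an element of $N(T)$. The only differences are cosmetic: you work with the group-scheme image $\Qs$ and pass explicitly to the identity component, and you spell out the ``if'' direction that the paper simply calls clear.
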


\begin{proof}
The ``if'' part is clear. To prove the ``only if'' part, suppose $\Gamma:\cA=\bigoplus_{g\in G}\cA_g$, $\Gamma':\cA=\bigoplus_{g\in G}\cA'_g$, and there exists $\vphi\in\Aut(\cA)$ such that $\cA'_g=\vphi(\cA_g)$ for all $g\in G$. Let $T'=\vphi T \vphi^{-1}$. It is a maximal torus in $\Aut(\cA)$. Let $H=\Stab(\Gamma')$.
Then both $T$ and $T'$ are contained in $H$ and thus are maximal tori in $H$. Therefore, $T$ and $T'$ are conjugate in $H$, i.e., there exists $\psi\in H$ such that $\psi T'\psi^{-1}=T$. Let $\wt{\vphi}=\psi\vphi$. Then, by construction, we have $\wt{\vphi}T\wt{\vphi}^{-1}=T$ and $\cA'_g=\wt{\vphi}(\cA_g)$ for all $g\in G$. Hence we can take $w$ to be the image of the element $\wt{\vphi}\in N(T)$ in the quotient group $\W(T)=N(T)/C(T)$.
\end{proof}


\section{Gradings on $G_2$}\label{se:G2}

The central simple Lie algebras of type $G_2$ appear as the algebras of derivations of the Cayley algebras. The gradings on the simple Lie algebra of type $G_2$ over an algebraically closed field of characteristic $0$ were obtained independently in \cite{CristinaCandidoG2} and \cite{BahturinTvalavadzeG2}, using the results on gradings on the (unique) Cayley algebra in \cite{ElduqueOctonions}.

In this section the gradings on the simple Lie algebras of type $G_2$ will be obtained over arbitrary fields of characteristic different from $2$ and $3$. Note that, in characteristic $3$, the Lie algebra of derivations of a Cayley algebra is not simple (see e.g. \cite{CandidoElduque}).

So let $\cC$ be a Cayley algebra over a field $\bF$, $\chr{\FF}\ne 2,3$, and let $\frg=\Der(\cC)$.
Then we have the affine group scheme $\bAut(\cC)$ and the morphism $\Ad\colon\bAut(\cC)\rightarrow \bAut(\frg)$.

Let $\bFalg$ be the algebraic closure of $\bF$. Then $\Aut_\bFalg(\cC\ot\bFalg)$ is the simple algebraic group of type $G_2$. It is well-known that
\[
\Ad_\bFalg\colon\Aut_\bFalg(\cC\ot\bFalg)\to\Aut_\bFalg(\frg\ot\bFalg)
\]
is bijective. Since any derivation of $\frg$ is inner (see \cite{Seligman}), the differential
\[
\ad\colon\frg\to\Der(\frg)
\]
is also bijective. Finally, since
\[
\dim\Aut_\bFalg(\cC\ot\bFalg)=14=\dim\Der(\cC),
\]
we conclude that $\bAut(\cC)$ is smooth. It follows that $\Ad\colon\bAut(\cC)\rightarrow \bAut(\frg)$ is an isomorphism of affine group schemes and hence Theorems \ref{transfer} and \ref{transfer_fine} yield the following result:

\begin{theorem}\label{th:transfer_G2}
Let $\cC$ be a Cayley algebra over a field $\bF$, $\chr{\FF}\ne 2,3$. Then the abelian group gradings on $\Der(\cC)$ are those induced by such gradings on $\cC$.
The algebras $\cC$ and $\Der(\cC)$ have the same classification of fine gradings up to equivalence and, for any abelian group $G$, the same classification of $G$-gradings up to isomorphism.\qed
\end{theorem}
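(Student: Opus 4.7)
The plan is to verify that the adjoint morphism $\Ad\colon\bAut(\cC)\to\bAut(\frg)$ is an isomorphism of affine group schemes, and then quote Theorems \ref{transfer} and \ref{transfer_fine} to transfer the classification of gradings from $\cC$ to $\frg=\Der(\cC)$. According to the criterion recalled at the end of Section \ref{se:gradings}, it suffices to check three things: (i) $\Ad_\bFalg$ is bijective on $\bFalg$-points; (ii) the differential $\ad\colon\frg\to\Der(\frg)$ is bijective; and (iii) $\bAut(\cC)$ is smooth.

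For (i), I would invoke the classical fact that, over an algebraically closed field of characteristic $\ne 2,3$, $\Aut_\bFalg(\cC\ot\bFalg)$ is the connected simple algebraic group of type $G_2$ and its adjoint representation on $\Der(\cC\ot\bFalg)$ gives an isomorphism with $\Aut_\bFalg(\frg\ot\bFalg)$ (this is standard; a reference is Jacobson or Seligman). For (ii), I would cite Seligman's result that every derivation of a simple Lie algebra of type $G_2$ in characteristic $\ne 2,3$ is inner, and note that $\frg$ is centerless (being simple and not abelian), so $\ad$ is both injective and surjective. For (iii), the dimension of $\AAut(\cC)$ equals that of its group of $\bFalg$-points, which is $14$, and $\dim\Der(\cC)=14$ as well (this is the standard computation of derivations of the octonions, again requiring $\chr{\FF}\ne 2,3$ but otherwise characteristic-free). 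Hence $\dim\Lie(\bAut(\cC))=\dim\bAut(\cC)$, which is the smoothness criterion cited from \cite{KMRT}.

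Once these three points are in place, the general machinery in Section \ref{se:gradings} (specifically the remark after conditions 1) and 2), referring to \cite[(22.5)]{KMRT}) gives that $\Ad$ is an isomorphism of affine group schemes. Then Theorem \ref{transfer} yields a map $\Gamma\mapsto\Ad(\Gamma)$ from $G$-gradings on $\cC$ to $G$-gradings on $\frg$ that respects isomorphism and weak isomorphism, and Theorem \ref{transfer_fine} yields that this map preserves and reflects fineness and equivalence; since $\Ad$ is an isomorphism of group schemes, every diagonalizable subgroup scheme of $\bAut(\frg)$ is the image of one in $\bAut(\cC)$, so (by Proposition \ref{duality_char_any}) the map is a bijection at the level of both $G$-gradings up to isomorphism and fine gradings up to equivalence.

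The main obstacle I anticipate is not the group-scheme formalism --- that is packaged by the preceding sections --- but rather being careful that none of the three ingredients (i)--(iii) silently exclude characteristic $3$; the hypothesis $\chr{\FF}\ne 2,3$ is needed precisely to have a simple $\frg$ with the correct dimension, all derivations inner, and $\Ad_\bFalg$ bijective. I would therefore explicitly flag the characteristic restriction when citing each of the classical results rather than bundling them into one appeal.
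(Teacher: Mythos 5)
Your proposal is correct and follows essentially the same route as the paper: the paper's proof verifies exactly the three conditions you list (bijectivity of $\Ad_{\bFalg}$ on points, bijectivity of $\ad$ via Seligman's result that all derivations of $\frg$ are inner, and smoothness of $\bAut(\cC)$ from the dimension count $14=14$), then invokes the criterion from \cite[(22.5)]{KMRT} together with Theorems \ref{transfer} and \ref{transfer_fine}. Your added remarks on injectivity of $\ad$ via centerlessness and on the characteristic restrictions are consistent with, and slightly more explicit than, the paper's treatment.
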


If $\cC$ is split, then $\frg=\Der(\cC)$ is the split simple Lie algebra of type $G_2$, and the Cartan grading on $\cC$ induces the Cartan decomposition of $\frg$ relative to a split Cartan subalgebra. The latter will be called the \emph{Cartan grading} on $\frg$.

\begin{corollary}
Let $\cC$ be a Cayley algebra over a field $\bF$, $\chr{\FF}\ne 2,3$. Then any abelian group grading on the simple Lie algebra $\frg=\Der(\cC)$ is, up to equivalence, either a $\bZ_2^r$-grading, $r=1,2,3$, induced by the Cayley--Dickson doubling process on $\cC$, or a coarsening of the Cartan grading on the split algebra $\frg$. In particular, if $\FF$ is algebraically closed, then there are, up to equivalence, exactly two fine abelian group gradings on $\frg$: the Cartan grading $\Gamma^1_\frg$ with universal group $\ZZ^2$ and the Cayley--Dickson grading $\Gamma^2_\frg$ with universal group $\ZZ_2^3$.\qed
\end{corollary}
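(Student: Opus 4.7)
The proof is essentially an immediate consequence of the machinery already assembled, so the plan is short and unambitious.

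My plan is first to invoke Theorem \ref{th:transfer_G2}, which tells us that the map $\Gamma \mapsto \mathrm{Ad}(\Gamma)$ is a bijection between abelian group gradings on $\cC$ and abelian group gradings on $\frg = \Der(\cC)$, and that this bijection preserves the equivalence relation and the property of being fine. Thus any classification result up to equivalence on $\cC$ transfers verbatim to $\frg$, which is the entire point of having set up the group-scheme framework in Section \ref{se:gradings}.

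Next, I would cite Theorem \ref{th:grOctonions}: any abelian group grading on a Cayley algebra $\cC$ is, up to equivalence, either one of the $\bZ_2^r$-gradings ($r=1,2,3$) induced by iterated Cayley--Dickson doubling, or a coarsening of the Cartan grading on the split Cayley algebra. Combining this with the bijection from the previous paragraph gives the first assertion of the corollary: every abelian group grading on $\frg$ is equivalent either to the Lie-algebra grading $\mathrm{Ad}(\Gamma)$ coming from a Cayley--Dickson grading on $\cC$, or to a coarsening of the Cartan decomposition of the split $\frg$ (which is precisely $\mathrm{Ad}$ applied to the Cartan grading on the split $\cC$, as noted in the paragraph immediately preceding the corollary).

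For the ``in particular'' statement over an algebraically closed $\FF$, I would apply Corollary \ref{fine_gradings_C}, which says that up to equivalence the only fine abelian group gradings on $\cC$ are the Cartan grading $\Gamma^1_\cC$ with universal group $\bZ^2$ and the full Cayley--Dickson grading $\Gamma^2_\cC$ with universal group $\bZ_2^3$ (the latter existing because $\chr{\FF} \ne 2$). Applying Theorem \ref{transfer_fine} gives the corresponding two fine gradings $\Gamma^1_\frg = \mathrm{Ad}(\Gamma^1_\cC)$ and $\Gamma^2_\frg = \mathrm{Ad}(\Gamma^2_\cC)$ on $\frg$, with the same universal groups $\bZ^2$ and $\bZ_2^3$ respectively (the universal group is determined by $\Diags(\Gamma)$, which is preserved under the isomorphism $\mathbf{Ad}$).

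There is really no obstacle here: the work was all done in establishing that $\mathbf{Ad}\colon\bAut(\cC)\to\bAut(\frg)$ is an isomorphism of affine group schemes under the hypothesis $\chr{\FF} \neq 2,3$, and the content of the corollary is just reading off that work through Theorems \ref{transfer}, \ref{transfer_fine}, \ref{th:grOctonions}, and Corollary \ref{fine_gradings_C}. If anything required care, it would only be to verify that the universal group is preserved by $\mathbf{Ad}$, but this is automatic because $U(\Gamma)^D = \Diags(\Gamma)$ by the discussion following Proposition \ref{duality_char_any}, and $\mathbf{Ad}$ being an isomorphism sends $\Diags(\Gamma)$ to $\Diags(\mathrm{Ad}(\Gamma))$.
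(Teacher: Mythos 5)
Your proposal is correct and follows exactly the route the paper intends: the corollary is stated with a \qed precisely because it is the immediate combination of Theorem \ref{th:transfer_G2} (itself resting on $\bAut(\cC)\cong\bAut(\frg)$ and Theorems \ref{transfer}, \ref{transfer_fine}) with Theorem \ref{th:grOctonions} and Corollary \ref{fine_gradings_C}. Your added remark that the universal group is preserved because $\Diags(\Gamma)=U(\Gamma)^D$ is carried along by the isomorphism is the right justification for that detail.
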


Let $\Gamma^1_\frg(G,\gamma)$ and $\Gamma^2_\frg(G,H)$ be the $G$-gradings induced by $\Gamma^1_\frg$ and $\Gamma^2_\frg$, respectively, in the same way as $\Gamma^1_\cC(G,\gamma)$ and $\Gamma^2_\cC(G,H)$ are induced from $\Gamma^1_\cC$ and $\Gamma^2_\cC$ (see Theorem \ref{th:gradings_C_iso}).

\begin{corollary}\label{co:gradings_G2_iso}
Let $\frg$ be the simple Lie algebra of type $G_2$ over an algebraically closed field $\bF$, $\chr{\FF}\ne 2,3$. Let $G$ be an abelian group. Then any $G$-grading on $\frg$ is isomorphic to some $\Gamma^1_\frg(G,\gamma)$ or $\Gamma_\frg^2(G,H)$, but not both. Also,
\begin{itemize}
\item $\Gamma^1_\frg(G,\gamma)$ is isomorphic to $\Gamma^1_\frg(G,\gamma')$ if and only if $\gamma\sim\gamma'$;

\item $\Gamma^2_\frg(G,H)$ is isomorphic to $\Gamma^2_\frg(G,H')$ if and only if $H=H'$.\qed
\end{itemize}
\end{corollary}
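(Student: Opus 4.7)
The plan is to derive this corollary as a direct consequence of Theorem~\ref{th:transfer_G2} and Theorem~\ref{th:gradings_C_iso}, with no new computation needed at the $G_2$ level.

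First, I would recall from the discussion preceding Theorem~\ref{th:transfer_G2} that the adjoint morphism $\Ad\colon\bAut(\cC)\to\bAut(\frg)$ is an isomorphism of affine group schemes. By Proposition~\ref{duality_char_any}, the isomorphism classes of $G$-gradings on a finite-dimensional algebra $\cA$ correspond bijectively to the $\Aut(\cA)$-conjugacy classes of morphisms $G^D\to\bAut(\cA)$. Applying this description to both $\cC$ and $\frg$, and composing with the isomorphism $\Ad$ (which induces a group isomorphism on $\FF$-points), I obtain a bijection between isomorphism classes of $G$-gradings on $\cC$ and on $\frg$, sending the class of $\Gamma$ to the class of $\Ad(\Gamma)$. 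Equivalently, one may apply Theorem~\ref{transfer} once to $\Ad$ and once to its inverse $\Ad^{-1}$ to see that the map $\Gamma\mapsto\Ad(\Gamma)$ is well-defined and injective on isomorphism classes, and Theorem~\ref{th:transfer_G2} (surjectivity of the transfer) ensures that it is also surjective.

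Second, by the very definitions of $\Gamma^1_\frg(G,\gamma)$ and $\Gamma^2_\frg(G,H)$ introduced immediately before the statement of the corollary, these are precisely the $G$-gradings $\Ad\bigl(\Gamma^1_\cC(G,\gamma)\bigr)$ and $\Ad\bigl(\Gamma^2_\cC(G,H)\bigr)$, respectively. So the bijection from the previous step is compatible with these two families, and the desired statements transfer verbatim from Theorem~\ref{th:gradings_C_iso}: every $G$-grading on $\frg$ is isomorphic to some $\Gamma^1_\frg(G,\gamma)$ or $\Gamma^2_\frg(G,H)$; gradings of the two types are mutually non-isomorphic (as the bijection preserves non-isomorphism, and the $\cC$-level gradings of the two types are already distinguished on $\cC$ in the proof of Theorem~\ref{th:gradings_C_iso} by the dimension of the identity component, among other things); and the isomorphism relations within each family are the ones stated, namely $\gamma\sim\gamma'$ in type~1 and $H=H'$ in type~2.

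I do not foresee a genuine obstacle. The only conceptual point that deserves verification is that an isomorphism of automorphism group schemes induces a bijection on isomorphism classes of $G$-gradings (as opposed to merely equivalence classes), but this is a formal consequence of Proposition~\ref{duality_char_any} together with the naturality argument already carried out in the proof of Theorem~\ref{transfer}. All of the substantive content (the classification of fine gradings on $\cC$, the structure of the Weyl groups, the smoothness of $\bAut(\cC)$ needed for $\Ad$ to be an isomorphism of group schemes) has been established in the preceding sections, so the proof of the present corollary reduces to invoking those results.
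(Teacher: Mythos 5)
Your proposal is correct and follows exactly the paper's route: the corollary is stated with a \qed precisely because it is the immediate combination of Theorem~\ref{th:transfer_G2} (the isomorphism $\Ad\colon\bAut(\cC)\to\bAut(\frg)$ transfers the classification of $G$-gradings up to isomorphism) with Theorem~\ref{th:gradings_C_iso}. Your extra remark that the bijection on isomorphism classes is obtained by applying Theorem~\ref{transfer} to both $\Ad$ and $\Ad^{-1}$ is exactly the formal point the paper leaves implicit.
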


If one wants to obtain a classification of all abelian group gradings on $\Der(\cC)$ up to equivalence, then one should be careful when applying Theorem \ref{th:transfer_G2}, because each grading on our list in Theorem \ref{th:Hurwitzgradings} can be realized as a $G$-grading for many different groups $G$.

For example, consider the $3$-grading on the split Cayley algebra $\cC$ in Theorem \ref{th:Hurwitzgradings}(3): $\cC_0=\espan{e_1,e_2,u_3,v_3}$, $\cC_1=\espan{u_1,v_2}$, $\cC_{-1}=\espan{u_2,v_1}$. As a $\bZ$-grading it induces a $5$-grading on $\Der(\cC)$, with $\Der(\cC)_2=\espan{D_{u_1,v_2}}\ne 0$, where $D_{a,b}:c\mapsto [[a,b],c]+3\bigl((ac)b-a(cb)\bigr)$ is the \emph{inner derivation} defined by $a,b\in \cC$ (the linear span of the inner derivations fills $\Der(\cC)$), so it has $5$ different nonzero homogeneous components. Its type is $(2,0,0,3)$. However, up to equivalence, this grading on $\cC$ is also a $\bZ_3$-grading, and as such it induces a $\bZ_3$-grading on $\Der(\cC)$ of type $(0,0,0,1,2)$.

As a further example, the Cartan grading on the split Cayley algebra $\cC$ can be realized as a $G$-grading for any abelian group $G$ containing two elements $g_1$ and $g_2$ such that the elements $e,g_1,g_2,g_1 g_2,g_1^{-1},g_2^{-1},(g_1 g_2)^{-1}$ are all different. In particular, it can be obtained as a $\bZ_3^2$-grading, with $g_1=(\bar 1,\bar 0)$ and $g_2=(\bar 0,\bar 1)$. However, the induced $\bZ_3^2$-grading on $\Der(\cC)$ is not equivalent to the Cartan grading, as some of the nonzero root spaces coalesce in the $\bZ_3^2$-grading.

\smallskip

Easy combinatorial arguments give all the gradings on $\Der(\cC)$ in terms of the gradings on the Cayley algebra $\cC$ in Theorem \ref{th:Hurwitzgradings} (see \cite[Figure 1]{Ksur}):

\begin{theorem}
Let $\cC$ be a split Cayley algebra over a field of characteristic different from $2$ and $3$. Up to equivalence, the nontrivial abelian group gradings on $\Der(\cC)$ are:
\begin{enumerate}
\item The $\bZ_2^r$-gradings induced by the Cayley--Dickson doubling process, $r=1,2,3$.
\item Eleven gradings induced by the Cartan grading on $\cC$ with universal groups: $\bZ^2$, $\bZ_7$, $\bZ_8$, $\bZ_9$, $\bZ_{10}$, $\bZ$, $\bZ_6\times\bZ_2$, $\bZ\times\bZ_2$, $\bZ_{12}$, $\bZ\times\bZ_3$ and $\bZ_3^2$.
\item Three gradings induced by the $3$-grading on $\cC$ with universal groups $\bZ$, $\bZ_3$ and $\bZ_4$.
\item Three gradings induced by the $5$-grading on $\cC$ with universal groups $\bZ$, $\bZ_5$ and $\bZ_6$.
\item The $\bZ_3$-grading induced by the $\bZ_3$-grading on $\cC$.
\item The $\bZ_4$-grading induced by the $\bZ_4$-grading on $\cC$.
\item Three gradings induced by the $\bZ\times\bZ_2$-grading on $\cC$ with universal groups $\bZ\times\bZ_2$, $\bZ_3\times\bZ_2$ and $\bZ_4\times\bZ_2$.\qed
\end{enumerate}
\end{theorem}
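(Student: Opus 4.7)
The plan is to apply Theorem \ref{th:transfer_G2}, which gives a bijection between $G$-gradings on $\cC$ and $G$-gradings on $\frg=\Der(\cC)$ for any fixed abelian group $G$. The subtlety, however, is that at the level of unlabelled partitions (equivalence rather than isomorphism), different group realizations of the same equivalence class on $\cC$ can induce inequivalent partitions on $\frg$: a component $\frg_g$ is spanned by inner derivations $D_{a,b}$ with $\deg(a)+\deg(b)=g$, so the coalescence of components on $\frg$ is generally strictly finer than on $\cC$. I would therefore traverse the nine equivalence classes on $\cC$ from Theorem \ref{th:Hurwitzgradings} and, for each, enumerate the inequivalent ways to realize that partition as a group grading and push it forward to $\frg$.

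For the Cayley--Dickson gradings (item (1)), the support on $\cC$ already generates the universal group $\bZ_2^r$, so there is no other group realization. The induced grading on $\frg$ also has universal group $\bZ_2^r$, contributing one equivalence class for each $r=1,2,3$.

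For items (2)--(7), the underlying $\cC$-grading is a coarsening of the Cartan grading, and I would parametrize each realization by degrees $\deg u_i=g_i\in G$ (with $g_1+g_2+g_3=0$, $\deg v_i=-g_i$, $\deg e_j=0$) in an abelian group $G$. The induced grading on $\frg$ then has support $\{0\}\cup\{\pm g_i\}_i\cup\{\pm(g_i-g_j)\}_{i\ne j}$, i.e., the zero element together with the image of the twelve roots of $G_2$. For each of the six equivalence classes on $\cC$, I would enumerate all realizations $(G;g_1,g_2,g_3)$ modulo the Weyl group of the Cartan grading, which by the cited result is $\W(\Gamma_\cC^1)=\Aut\Phi=\W(G_2)$, and compute the resulting partition on the twelve roots. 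The universal group of the induced $\frg$-grading is then $\bZ^2$ modulo the lattice of relations imposed on its support, and the claimed counts $11,3,3,1,1,3$ follow.

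The main obstacle is item (2), where the eleven universal groups $\bZ^2,\bZ_7,\bZ_8,\bZ_9,\bZ_{10},\bZ,\bZ_6\times\bZ_2,\bZ\times\bZ_2,\bZ_{12},\bZ\times\bZ_3,\bZ_3^2$ must be shown to exhaust the possibilities up to the $\W(G_2)$-action, subject to the constraint that the map $\bZ^2\to G$ separates the seven elements of the support on $\cC$. I would organize this case analysis by the rank and structure of the kernel of this map, distinguishing between kernel trivial, cyclic of rank one, and of rank two, and in the last case classifying the finite-index sublattices of $\bZ^2$ up to $\W(G_2)$-equivalence. Items (3)--(7) involve smaller supports and are handled by the same method with far less work, the universal groups being read off directly from the induced support and the bracket relations in $\frg$.
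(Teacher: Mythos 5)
Your proposal takes essentially the same route as the paper: reduce via Theorem \ref{th:transfer_G2} to gradings on $\cC$, observe that inequivalent group realizations of a single equivalence class from Theorem \ref{th:Hurwitzgradings} can induce inequivalent gradings on $\Der(\cC)$ (the paper makes exactly this point with its two examples preceding the theorem), and then enumerate the realizations of each class modulo $\W(\Gamma^1_\cC)=\Aut\Phi$ --- a computation the paper itself compresses into ``easy combinatorial arguments'' with a pointer to \cite[Figure 1]{Ksur}. The only nitpick is your justification in item (1): generation of the universal group by the support does not by itself rule out further realizations (it holds for the Cartan grading too); what you need there is that the support of the $\bZ_2^r$-grading is the \emph{whole} group and every element has order at most $2$, so any partition-preserving homomorphism out of $\bZ_2^r$ is injective.
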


In particular, over an algebraically closed field of characteristic different from $2$ and $3$, there are exactly $25$ equivalence classes of nontrivial gradings on the  simple Lie algebra of type $G_2$.


\section{The Albert algebra}\label{se:Albert}

Let $\calC$ be the Cayley algebra over an algebraically closed field $\bF$ of characteristic different from $2$. The \emph{Albert algebra} is the algebra of Hermitian $3\times 3$-matrices over $\calC$:
\begin{equation}\label{eq:Albert}
\begin{split}
\calA=\sym_3(\calC,*)&=\left\{\begin{pmatrix} \alpha_1&\bar a_3&a_2\\ a_3&\alpha_2&\bar a_1\\ \bar a_2&a_1&\alpha_3\end{pmatrix}: \alpha_1,\alpha_2,\alpha_3\in \bF,\ a_1,a_2,a_3\in \calC\right\} \\[6pt]
&=\bF E_1\oplus\bF E_2\oplus\bF E_3\oplus \iota_1(\calC)\oplus\iota_2(\calC)\oplus\iota_3(\calC),
\end{split}
\end{equation}
where
\[
\begin{aligned}
E_1&=\begin{pmatrix}1&0&0\\ 0&0&0\\ 0&0&0\end{pmatrix}, &
E_2&=\begin{pmatrix}0&0&0\\ 0&1&0\\ 0&0&0\end{pmatrix}, &
E_3&=\begin{pmatrix}1&0&0\\ 0&0&0\\ 0&0&1\end{pmatrix}, \\
\iota_1(a)&=2\begin{pmatrix}0&0&0\\ 0&0&\bar a\\ 0&a&0\end{pmatrix},\quad &
\iota_2(a)&=2\begin{pmatrix}0&0&a\\ 0&0&0\\ \bar a&0&0\end{pmatrix},\quad &
\iota_3(a)&=2\begin{pmatrix}0&\bar a&0\\a&0&0\\ 0&0&0\end{pmatrix},\quad
\end{aligned}
\]
with (commutative) multiplication given by $X Y=\frac{1}{2}(X\cdot Y+Y\cdot X)$, where $X\cdot Y$ denotes the usual product of matrices $X$ and $Y$. Then $E_i$ are orthogonal idempotents with $E_1+E_2+E_3=1$. The rest of the products are as follows:
\begin{equation}\label{eq:Albertproduct}
\begin{split}
&E_i\iota_i(a)=0,\quad E_{i+1}\iota_i(a)=\frac{1}{2}\iota_i(a)=E_{i+2}\iota_i(a),\\
&\iota_i(a)\iota_{i+1}(b)=\iota_{i+2}(\bar a\bar b),\quad
\iota_i(a)\iota_i(b)=2n(a,b)(E_{i+1}+E_{i+2}),
\end{split}
\end{equation}
for any $a,b\in \calC$, with $i=1,2,3$ taken modulo $3$. (This convention about indices will be used without further mention.)

\smallskip

For the main properties of the Albert algebra the reader may consult \cite{Jacobson}. This is the only exceptional simple Jordan algebra over $\bF$.
Any element $X\in\calA$ satisfies the generic degree $3$ equation
\begin{equation}\label{eq:generic3}
X^3-T(X)X^2+S(X)X-N(X)1=0,
\end{equation}
for the linear form $T$ (the \emph{trace}), the quadratic form $S$, and the cubic form $N$ (the \emph{norm}) given by:
\[
\begin{split}
T(X)&=\alpha_1+\alpha_2+\alpha_3,\\
S(X)&=\frac{1}{2}\bigl(T(X)^2-T(X^2)\bigr)=\sum_{i=1}^3 \bigl(\alpha_{i+1}\alpha_{i+2}-4n(a_i)\bigr),\\
N(X)&=\alpha_1\alpha_2\alpha_3+8n(a_1,\bar a_2\bar a_3)-4\sum_{i=1}^3\alpha_in(a_i),
\end{split}
\]
for $X=\sum_{i=1}^3\bigl(\alpha_iE_i+\iota_i(a_i)\bigr)$. We note that the trace $T$ is associative:
\[
T\bigl((XY)Z\bigr)=T\bigl(X(YZ)\bigr)\quad\mbox{for all}\quad X,Y,Z\in\calA
\]
and symmetric:
\[
T(XY)=T(YX)\quad\mbox{for all}\quad X,Y\in\calA.
\]

The next result shows the good behavior of the trace form $T(X,Y)\bydef T(XY)$ of the Albert algebra with respect to gradings. It will be crucial in what follows.

\begin{theorem}\label{th:trace}
Let $G$ be an abelian group and let $\calA=\bigoplus_{g\in G}\calA_g$ be a $G$-grading on the Albert algebra over an algebraically closed field of characteristic different from $2$. Then $T(\calA_g\calA_h)=0$ unless $gh=e$.
\end{theorem}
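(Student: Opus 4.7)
The plan is to reduce the statement to showing that the generic trace $T$ is homogeneous of degree $e$ with respect to the grading, i.e., $T(\calA_g)=0$ for all $g\ne e$. Granting this, the theorem follows immediately: since $\calA_g\calA_h\subset\calA_{gh}$, one has $T(\calA_g\calA_h)\subset T(\calA_{gh})$, which vanishes whenever $gh\ne e$.

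To prove the homogeneity of $T$, the strategy is to exploit the fact that $T$ is an intrinsic invariant of the algebra structure, being the coefficient of $X^2$ in the generic cubic polynomial \eqref{eq:generic3}. A standard uniqueness argument shows that any $\FF$-algebra automorphism of $\calA$, and more generally any $\cR$-algebra automorphism of $\calA\ot\cR$ for a unital commutative associative $\FF$-algebra $\cR$, preserves $T$: if $\vphi$ is such an automorphism, applying it to \eqref{eq:generic3} shows that $\vphi(X)$ satisfies the generic cubic with coefficients $T(X),\,S(X),\,N(X)$, and comparing with the generic cubic determined by $\vphi(X)$ itself, together with the linear independence of $1,\vphi(X),\vphi(X)^2$ on a Zariski-dense open subset, yields $T\circ\vphi=T$.

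Next, this invariance will be translated into the language of group schemes. By Proposition \ref{duality_char_any} the $G$-grading corresponds to a morphism $\eta_\Gamma\colon G^D\to\AAut(\calA)$, and using \eqref{action_group_scheme} the identity character $\FF G\to\FF G$ (viewed as an $\FF G$-point of $G^D$) yields the $\FF G$-algebra automorphism $f$ of $\calA\ot\FF G$ determined by $f(X\ot 1)=X\ot g$ for $X\in\calA_g$. Writing $T_{\FF G}\colon\calA\ot\FF G\to\FF G$ for the $\FF G$-linear extension of $T$, the invariance just established forces
\[
T(X)\cdot g \;=\; T_{\FF G}\bigl(f(X\ot 1)\bigr) \;=\; T_{\FF G}(X\ot 1) \;=\; T(X)\cdot 1
\]
for any $X\in\calA_g$. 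Since the elements of $G$ are linearly independent in $\FF G$, this forces $T(X)=0$ whenever $g\ne e$, which is the required homogeneity.

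The main subtlety is justifying the invariance of $T$ under arbitrary $\cR$-algebra automorphisms in positive characteristic (including $\chr{\FF}=3$, which is relevant for the $F_4$ application). This amounts to base-changing the uniqueness of the generic polynomial from $\FF$ to $\cR$; since \eqref{eq:generic3} amounts to a system of polynomial identities with $\FF$-coefficients in the coordinates of $X$, the identity and its uniqueness descend to $\cR$ whenever $\cR$ is $\FF$-flat, and in particular for $\cR=\FF G$, which is a free $\FF$-module.
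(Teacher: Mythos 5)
Your proposal is correct in substance but follows a genuinely different route from the paper. The paper first disposes of the case $\chr\FF\ne 3$ by the identity $T(X)=\frac19\trace(L_X)$, and then, to cover characteristic $3$, splits $G$ into its $2$-torsion part and a $2$-torsion-free part: for homogeneous $X$ of degree $g$ with $g$ of order $\ge 3$ it reads off $T(X)X^2=0$ and $S(X)X=0$ directly from the graded components of the generic cubic \eqref{eq:generic3}, while for $2$-torsion it realizes the grading as a simultaneous eigenspace decomposition for honest automorphisms of $\calA$ (possible since $\chr\FF\ne 2$) and uses the invariance of $T$ under $\Aut(\calA)$. Your argument replaces this case analysis by a single application of the ``universal'' automorphism $f$ of $\calA\ot\FF G$ attached to the comodule structure, as in \eqref{action_group_scheme}; the conclusion $T(X)(g-e)=0$ in $\FF G$ then kills $T(X)$ for all $g\ne e$ at once, with no hypothesis on the order of $g$ and no appeal to $\trace(L_X)$. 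This is more uniform and arguably cleaner, but it shifts the whole burden onto the claim that $T$ is invariant under $\cR$-point automorphisms of $\AAut(\calA)$ for $\cR=\FF G$, i.e.\ that $\AAut(\calA)$ stabilizes $T$ scheme-theoretically --- a true but not completely free fact.

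That is the one step where your write-up is too quick. The phrase ``linear independence of $1,\vphi(X),\vphi(X)^2$ on a Zariski-dense open subset'' has no direct meaning over $\cR=\FF G$, which has zero divisors whenever $G$ has torsion. The flatness you invoke is indeed the right ingredient, but it must be deployed for the \emph{generic} element: writing $W=\sum_i x_i\ot e_i$ over $\FF[x_1,\dots,x_{27}]$, linear independence of $1,W,W^2$ over $\FF[x_i]$ is witnessed by a nonzero $3\times 3$ minor $d\in\FF[x_i]$ of the coordinate matrix, and $d$ remains a non-zero-divisor in $\cR[x_i]=\cR\ot_\FF\FF[x_i]$ because $\cR$ is free over $\FF$; this is what lets one cancel it. Moreover, the two cubic relations you subtract involve $f(Y)$ rather than $Y$, and the coordinates of $f(Y)$ lie in $\cR[x_i]$, not $\FF[x_i]$, so the minor argument does not apply to it directly; one should first substitute $Y=f^{-1}(W)$ so that the relation becomes $\bigl(T(W)-T(f^{-1}(W))\bigr)W^2-\cdots=0$ with the honest generic element $W$, and only then cancel. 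With these two points supplied, your proof is complete and valid in every characteristic $\ne 2$, including $3$.
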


\begin{proof}
If the characteristic of the ground field $\bF$ is not $3$, the result is very easy to prove, because $T(X)=\frac{1}{9}\trace(L_X)$ for any $X\in \calA$, where $L_X$ denotes the multiplication by $X$. Let us give a proof that includes the case of characteristic $3$. We may assume, without loss of generality, that $G$ is generated by the support of the grading, and hence it is finitely generated. It is sufficient to prove $T(\calA_g)=0$ for all $g\ne e$. If the order of $g$ is $\geq 3$, then equation \eqref{eq:generic3} shows that for any $X\in \calA_g$, $S(X)=0$ and either $T(X)=0$ or $X^2=0$. In the latter case, $T(X)^2=2S(X)+T(X^2)=0$, so again $T(X)=0$. Hence $T(\calA_g)=0$ for any $g\in G$ of order $\geq 3$. But $G=G_1G_2\cong G_1\times G_2$ where $G_2$ is the $2$-torsion subgroup of $G$ and $G_1$ is $2$-torsion free. Then $G_1$ has no elements of order $2$, and hence the trace of any non-identity homogeneous component of the $G_1$-grading induced by the projection $G\rightarrow G_1$ is $0$. In other words, $T(\calA_{gh})=0$ for any $e\ne g\in G_1$ and any $h\in G_2$. Now consider the $G_2$-grading induced by the projection $G\rightarrow G_2$. Since the characteristic is not $2$, the homogeneous components are the common eigenspaces for a family of commuting automorphisms. But for $\varphi\in\Aut(\calA)$ and $X\in \calA$ with $\varphi(X)=\lambda X$, $1\ne \lambda\in\bF$, we get $T(X)=T(\varphi(X))=\lambda T(X)$, so $T(X)=0$. Therefore, $T(\calA_{gh})=0$ for any $g\in G_1$ and $e\ne h\in G_2$. The result follows.
\end{proof}

\begin{corollary}\label{co:trace}
Under the assumtions of Theorem \ref{th:trace}, $\calA_e$ is a semisimple Jordan algebra. Moreover, if the degree of $\calA_e$ is $2$, then $\calA_e$ is isomorphic to $\bF\times\bF$.
\end{corollary}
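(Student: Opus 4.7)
The plan is to first establish that the Albert trace form $T(X,Y) \bydef T(XY)$ restricts to a non-degenerate symmetric bilinear form on $\calA_e$. By Theorem \ref{th:trace}, $T$ pairs $\calA_e$ trivially with every $\calA_g$ for $g \ne e$; combined with the standard fact that $T$ is non-degenerate on all of $\calA$ (for $\chr{\FF} \ne 2$), this forces the restriction $T|_{\calA_e \times \calA_e}$ to be non-degenerate.

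Next I would show that every nilpotent element $z \in \calA$ satisfies $T(z) = 0$. The Jordan subalgebra of $\calA$ generated by $z$ is associative and commutative, so the minimum polynomial of $z$ divides the generic cubic \eqref{eq:generic3} and has degree at most $3$. A short case analysis on the index of nilpotence (using $S(X) = \frac{1}{2}(T(X)^2 - T(X^2))$ to eliminate $S$) gives $T(z) = 0$ in every case. Since the Jacobson radical of $\calA_e$ consists of nilpotent elements, for any $x$ in this radical and any $y \in \calA_e$ the product $xy$ is also nilpotent, so $T(x, y) = 0$; non-degeneracy of $T|_{\calA_e}$ forces $x = 0$, proving the radical is trivial and hence $\calA_e$ is semisimple.

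For the ``Moreover'' assertion, recall the classification of semisimple Jordan algebras of degree two over an algebraically closed field of characteristic different from $2$: they are precisely the algebras of the form $J(V, q) = \FF \cdot 1 \oplus V$ with $q$ a non-degenerate quadratic form on $V$ and $\dim V \ge 1$ (the case $\dim V = 1$ being $\FF \times \FF$). Take any $X \in V \subset \calA_e$; then $X^2 = q(X) \cdot 1$ in $\calA_e \subset \calA$. Substituting this into \eqref{eq:generic3} and using the linear independence of $X$ and $1$ in $\calA$, one obtains $q(X) + S(X) = 0$. Combined with $S(X) = \frac{1}{2}(T(X)^2 - 3q(X))$, this yields $q(X) = T(X)^2$ for every $X \in V$. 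Polarizing, the bilinear form associated to $q$ on $V$ is $b(X, Y) = 2\, T(X)\, T(Y)$, of rank at most $1$. If $\dim V \ge 2$, this contradicts the non-degeneracy of $q$. Therefore $\dim V = 1$ and $\calA_e \cong \FF \times \FF$.

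The main delicate point in the first part is covering characteristic $3$: the usual quick argument using $T(X) = \frac{1}{9}\trace(L_X)$ is unavailable there, so one must argue intrinsically through the cubic \eqref{eq:generic3}. The same cubic then supplies the rigidity identity $q(X) = T(X)^2$ that forces every degree-$2$ semisimple identity component to collapse down to $\FF \times \FF$.
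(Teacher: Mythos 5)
Your proof is correct, and it splits naturally into a part that matches the paper and a part that does not. For semisimplicity you follow essentially the paper's route: both arguments rest on the nondegeneracy of $T|_{\calA_e}$ (Theorem \ref{th:trace} plus global nondegeneracy of the trace form) together with the fact that nilpotent elements have zero generic trace; the paper kills ideals $\calI$ with $\calI^2=0$ and quotes Dieudonn\'e's Lemma, while you kill the radical directly and supply the characteristic-$3$-safe verification that nilpotents have trace zero via the generic cubic (the same $T(X)^2=2S(X)+T(X^2)$ manipulation the paper uses inside the proof of Theorem \ref{th:trace}). For the ``moreover'' part your route is genuinely different. The paper stays with the generic minimal polynomial: if $\calA_e$ were simple of degree $2$, the factorization $m_X(\lambda)=(\lambda-T''(X))\tilde m_X(\lambda)$ and the multiplicativity of $N$ and $S'$ make $T''$ multiplicative, so $\ker T''$ is a codimension-one ideal, contradicting simplicity. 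You instead invoke the classification of semisimple degree-$2$ Jordan algebras as $\FF 1\oplus V$ with $X^2=q(X)1$ and extract from the generic cubic the identity $q(X)=T(X)^2$ on $V$, whose polar form $2T(X)T(Y)$ has rank at most one, forcing $\dim V=1$. Your computation checks out, works uniformly in characteristic $3$ (where $T(1)=0$), and crucially uses that the unit of $\calA_e$ is the unit of $\calA$ --- which is exactly what distinguishes $\calA_e$ from the nine-dimensional degree-two subalgebra with unit $E_2+E_3$ that appears later in \S\ref{ss:Degree2}, so no contradiction arises there. The trade-off: the paper's argument avoids appealing to the explicit classification of simple degree-$2$ Jordan algebras, while yours avoids the generic-norm multiplicativity machinery and yields the concrete rigidity identity $q=T^2$.
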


\begin{proof}
The restriction $T\vert_{\calA_e}$ is nondegenerate by Theorem \ref{th:trace}, and if $\calI$ is an ideal of $\calA_e$ with $\calI^2=0$, then for any $X\in\calI$, $T(X\calA_e)=0$, as any element in $X\calA_e$ is nilpotent (see \cite[p.~226]{Jacobson}). Then Dieudonn\'e's Lemma \cite[p.~239]{Jacobson} proves that $\calA_e$ is semisimple.

If the degree of $\calA_e$ is $2$, then either $\calA_e$ is isomorphic to $\bF\times\bF$ (a direct sum of two copies of the degree one simple Jordan algebra), or it is a simple Jordan algebra of degree $2$. In the latter case let $\tilde m_X(\lambda)=\lambda^2-T'(X)\lambda+S'(X)$ be the generic minimal polynomial of $\calA_e$. With $m_X(\lambda)=\lambda^3-T(X)\lambda^2+S(X)\lambda-N(X)$ being the generic minimal polynomial in $\calA$, it follows that there is a linear form $T''\colon \calA_e\rightarrow \bF$ such that $m_X(\lambda)=(\lambda -T''(X))\tilde m_X(\lambda)$ for any $X\in \calA_e$ (see \cite[\S VI.3]{Jacobson}).
Then $N(X)=S'(X)T''(X)$ for any $X\in\calA_e$ and since $S'$ and $N$ are multiplicative, so is $T''$. It follows that $\ker T''$ is a codimension one ideal of $\calA_e$, a contradiction.
\end{proof}

We will make use of some subgroups of the automorphism group $\Aut(\calA)$. First we will consider $\Stab_{\Aut \calA}(E_1,E_2,E_3)$, the stabilizer of the three orthogonal idempotents $E_1$, $E_2$ and $E_3$. The orthogonal group of $\calC$ relative to its norm will be denoted by $\Ort(\calC,n)$, and the special orthogonal group by $\SOrt(\calC,n)$.

\begin{df}\label{df:relatedtriple}
A triple $(f_1,f_2,f_3)\in \Ort(\calC,n)^3$ is said to be \emph{related} if $f_1(\bar x\bar y)=\wb{f_2(x)}\,\wb{f_3(y)}$ for all $x,y\in\calC$.
\end{df}

To simplify the notation, consider the \emph{para-Hurwitz} product $x\bullet y=\bar x\bar y$ on $\calC$ --- see \cite[Chapter VIII]{KMRT}. Note that, for any $x,y,z\in \calC$, $n(x\bullet y,z)=n(\bar x\bar y,z)=n(\bar x,zy)=n(x,\bar y\bar z)=n(x,y\bullet z)$, and $(x\bullet y)\bullet x=\wb{\bar x\bar y}\bar x=(yx)\bar x=n(x)y=x\bullet (y\bullet x)$. In other words,
\begin{equation}\label{eq:paraHurwitz}
n(x\bullet y,z)=n(x,y\bullet z),\qquad (x\bullet y)\bullet x=n(x)y=x\bullet (y\bullet x),
\end{equation}
for all $x,y,z\in\calC$.

Consider the trilinear form on $\calC$ given by $\langle x,y,z\rangle=n(x\bullet y,z)$. Equation \eqref{eq:paraHurwitz} shows that $\langle x,y,z\rangle=\langle y,z,x\rangle$ for any $x,y,z\in\calC$.

\begin{lemma}\label{le:relatedtriples}
Let $f_1,f_2,f_3$ be three elements in $\Ort(\calC,n)$, then:
\begin{itemize}
\item $(f_1,f_2,f_3)$ is a related triple if and only if $\langle f_1(x),f_2(y),f_3(z)\rangle=\langle x,y,z\rangle$ for any $x,y,z\in\calC$.
\item $(f_1,f_2,f_3)$ is related if and only if so is $(f_2,f_3,f_1)$.
\end{itemize}
\end{lemma}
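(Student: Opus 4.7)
The plan is to route both parts through the invariance of the cyclically symmetric trilinear form $\langle\cdot,\cdot,\cdot\rangle$, with the cyclic symmetry $\langle a,b,c\rangle = \langle b,c,a\rangle$ coming from the para-Hurwitz identities in \eqref{eq:paraHurwitz}.

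For the first assertion, the related-triple property of Definition \ref{df:relatedtriple} can be rewritten in terms of the para-Hurwitz product as $f_1(x\bullet y) = f_2(x)\bullet f_3(y)$, since $\bar x\bar y = x\bullet y$. Under this hypothesis, cyclic symmetry, this identity (with $(x,y)$ replaced by $(y,z)$), and the orthogonality of $f_1$ give
\[
\langle f_1(x), f_2(y), f_3(z)\rangle = n\bigl(f_2(y)\bullet f_3(z),\, f_1(x)\bigr) = n\bigl(f_1(y\bullet z),\, f_1(x)\bigr) = n(y\bullet z, x) = \langle x, y, z\rangle.
\]
Conversely, reversing this chain shows that the invariance hypothesis gives $n\bigl(f_2(y)\bullet f_3(z) - f_1(y\bullet z),\, f_1(x)\bigr) = 0$ for every $x\in\calC$; the bijectivity of $f_1$ and the nondegeneracy of $n$ then force $f_2(y)\bullet f_3(z) = f_1(y\bullet z)$, which after relabeling is precisely the related-triple property.

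For the second assertion, I use the characterization just established. By cyclic symmetry, the invariance condition for $(f_2,f_3,f_1)$ rewrites as
\[
\langle f_2(x), f_3(y), f_1(z)\rangle = \langle f_1(z), f_2(x), f_3(y)\rangle = \langle z, x, y\rangle = \langle x, y, z\rangle,
\]
so it is exactly the invariance condition for $(f_1,f_2,f_3)$ evaluated at the cyclic permutation $(z,x,y)$ of $(x,y,z)$. Hence $(f_1,f_2,f_3)$ related implies $(f_2,f_3,f_1)$ related; applying this implication twice more cycles through $(f_3,f_1,f_2)$ and back to $(f_1,f_2,f_3)$, thereby yielding the converse. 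I do not anticipate any significant obstacle: the whole argument is a formal manipulation of cyclic symmetry and nondegeneracy.
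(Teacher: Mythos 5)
Your proof is correct and takes essentially the same route as the paper's: both translate the related-triple identity $f_1(x\bullet y)=f_2(x)\bullet f_3(y)$ into invariance of the trilinear form via nondegeneracy of $n$ and orthogonality of $f_1$, and then deduce the cyclic stability from the cyclic symmetry $\langle x,y,z\rangle=\langle y,z,x\rangle$. The only difference is organizational (the paper phrases it as a single chain of equivalences ending at $\langle f_2(x),f_3(y),f_1(z)\rangle=\langle x,y,z\rangle$), which is immaterial.
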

\begin{proof}
The triple $(f_1,f_2,f_3)$ is related if and only if $f_1(x\bullet y)=f_2(x)\bullet f_3(y)$ for any $x,y\in \calC$, and this happens if and only if $n\bigl(f_1(x\bullet y),f_1(z)\bigr)=n\bigl(f_2(x)\bullet f_3(y),f_1(z)\bigr)$ for any $x,y,z\in\calC$. But $f_1$ is orthogonal, so $n\bigl( f_1(x\bullet y),f_1(z)\bigr)=n(x\bullet y,z)$, and this is equivalent to $\langle f_2(x),f_3(y),f_1(z)\rangle =\langle x,y,z\rangle$. The cyclic symmetry of $\langle x,y,z\rangle$ completes the proof.
\end{proof}

Denote by $l_x$ and $r_x$ the left and right multiplications in the para-Cayley algebra $(\calC,\bullet)$: $l_x(y)=x\bullet y=\bar x\bar y$, $r_x(y)=y\bullet x=\bar y\bar x$. Then equation \eqref{eq:paraHurwitz} shows that $l_x^*=r_x$ and $l_xr_x=n(x)\id=r_xl_x$ for any $x\in\calC$, where $*$ denotes the adjoint relative to the norm $n$.

Let $\Cl(\cC,n)$ be the Clifford algebra of the space $\cC$ relative to the norm. The linear map
\[
\calC\longrightarrow \End_\bF(\calC\oplus\calC),\;x\mapsto \begin{pmatrix} 0&l_x\\ r_x&0\end{pmatrix},
\]
extends to an algebra isomorphism (see \cite[\S 35]{KMRT} or \cite{ElduqueTriality})
\[
\Phi\colon  \Cl(\calC,n)\rightarrow \End_\bF(\calC\oplus\calC),
\]
which is in fact an isomorphism of $\bZ_2$-graded algebras, where the Clifford algebra $\Cl(\calC,n)$ is $\bZ_2$-graded with $\degree x=\bar 1$ for all $x\in\calC$, and $\End_\bF(\calC\oplus\calC)$ is $\bZ_2$-graded with the $\bar 0$-component being the endomorphisms that preserve the two copies of $\calC$, and the $\bar 1$-component being the endomorphisms that swap these copies.

The {\em standard involution} $\tau$ on $\Cl(\calC,n)$ is defined by setting $\tau(x)=x$ for all $x\in\calC$. We define an involution on $\End_\bF(\calC\oplus\calC)$ as the adjoint relative to the quadratic form $n\perp n$ on $\cC\oplus\cC$. Since $l_x^*=r_x$ for any $x\in\calC$, it follows that $\Phi$ is an isomorphism of algebras with involution.

Consider now the corresponding spin group:
\[
\begin{split}
\Spin(\calC,n)&=\{u\in\Cl(\calC,n): u\cdot\tau(u)=1\ \text{and}\ u\cdot\calC\cdot u^{-1}\subset \calC\},\\
 &=\{x_1\cdot x_2\cdot \ldots\cdot x_{2r}: r\geq 0,\ x_i\in\calC\ \text{and}\ n(x_1)n(x_2)\cdots n(x_{2r})=1\},
\end{split}
\]
where the multiplication in $\Cl(\calC,n)$ is denoted $u\cdot v$.

For any $u\in\Spin(\calC,n)$, the map $\chi_u\colon \calC\rightarrow\calC$, $x\mapsto u\cdot x\cdot u^{-1}$ is in $\SOrt(\calC,n)$, and the map $\chi\colon \Spin(\calC,n)\rightarrow \SOrt(\calC,n)$, $u\mapsto \chi_u$ is a group homomorphism, which is onto and whose kernel is just the cyclic group of two elements $\{\pm 1\}$. Besides, for any $u\in \Spin(\calC,n)$, $\Phi(u)$ is an even endomorphism of $\calC\oplus\calC$, so there are linear maps $\rho^{\pm}_u\in\End_\bF(\calC)$ with
\[
\Phi(u)=\begin{pmatrix} \rho_u^-&0\\ 0&\rho_u^+\end{pmatrix}.
\]

\begin{theorem}\label{th:SpinRelated}
Let $\cA$ be the Albert algebra over an algebraically closed field of characteristic different from $2$. Then the map
\[
\Spin(\calC,n)\longrightarrow \GL(\calC)^3,\;u\mapsto (\chi_u,\rho_u^+,\rho_u^-),
\]
is a one-to-one group homomorphism whose image coincides with the set of related triples in $\Ort(\calC,n)^3$.
In particular, any related triple is contained in $\SOrt(\calC,n)^3$.
\end{theorem}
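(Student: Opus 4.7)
The plan is to transport everything to a concrete computation inside $\End_\FF(\cC\oplus\cC)$ via the $\ZZ_2$-graded algebra-with-involution isomorphism $\Phi$. Since any $u\in\Spin(\cC,n)$ lies in the even part $\Cl_0(\cC,n)$, the image $\Phi(u)$ is block-diagonal with blocks $\rho_u^-$ and $\rho_u^+$. So the assignment $u\mapsto(\chi_u,\rho_u^+,\rho_u^-)$ is automatically a group homomorphism (block-diagonal matrices multiply blockwise, and $\chi$ is already known to be a homomorphism), and it is injective because $\Phi$ is: the pair $(\rho_u^+,\rho_u^-)$ alone recovers $\Phi(u)$, hence $u$.

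For the related-triple property, write the identity $u\cdot x=\chi_u(x)\cdot u$ in $\Cl(\cC,n)$ and apply $\Phi$. Using $\Phi(x)=\left(\begin{smallmatrix} 0 & l_x\\ r_x & 0\end{smallmatrix}\right)$, comparison of the top-right $(\cC\to\cC)$ blocks yields
\[
\rho_u^-\circ l_x = l_{\chi_u(x)}\circ\rho_u^+\quad\text{for all }x\in\cC,
\]
i.e.\ $\rho_u^-(x\bullet y)=\chi_u(x)\bullet\rho_u^+(y)$, which says $(\rho_u^-,\chi_u,\rho_u^+)$ is a related triple. The cyclic symmetry in Lemma \ref{le:relatedtriples} then gives the same conclusion for $(\chi_u,\rho_u^+,\rho_u^-)$. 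That $\rho_u^\pm\in\Ort(\cC,n)$ is the identity $\Phi(u\cdot\tau(u))=\Phi(u)\Phi(u)^\ast=\id$ read blockwise, using that $\Phi$ intertwines $\tau$ with the adjoint relative to $n\perp n$.

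The key step is surjectivity onto related triples. Given $(f_1,f_2,f_3)\in\Ort(\cC,n)^3$ satisfying the related-triple identity, set
\[
u\bydef\Phi^{-1}\!\left(\begin{pmatrix} f_3 & 0\\ 0 & f_2\end{pmatrix}\right)\in\Cl_0(\cC,n).
\]
Then $u\cdot\tau(u)=1$ is immediate from $f_i^\ast=f_i^{-1}$. To verify the remaining condition, compute
\[
\Phi(u\cdot x\cdot u^{-1})=\begin{pmatrix} 0 & f_3\circ l_x\circ f_2^{-1}\\ f_2\circ r_x\circ f_3^{-1} & 0\end{pmatrix},
\]
and use the cyclic permutations $(f_3,f_1,f_2)$ and $(f_2,f_3,f_1)$, which are also related triples by Lemma \ref{le:relatedtriples}, to rewrite the off-diagonal blocks as $l_{f_1(x)}$ and $r_{f_1(x)}$, respectively. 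Hence $u\cdot x\cdot u^{-1}=f_1(x)\in\cC$, so $u\in\Spin(\cC,n)$, and by construction $\chi_u=f_1$, $\rho_u^+=f_2$, $\rho_u^-=f_3$. The ``in particular'' statement follows from surjectivity: since $\chi_u\in\SOrt(\cC,n)$ for $u\in\Spin(\cC,n)$, the first coordinate of every related triple lies in $\SOrt(\cC,n)$, and cyclic symmetry shifts each entry into the first slot.

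The main obstacle is essentially the last block-level verification, where one must turn the related-triple identity into the two equalities $f_3\circ l_x\circ f_2^{-1}=l_{f_1(x)}$ and $f_2\circ r_x\circ f_3^{-1}=r_{f_1(x)}$; once the $\Phi$-framework and Lemma \ref{le:relatedtriples} are in place, every other step reduces to routine bookkeeping.
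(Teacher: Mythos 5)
Your proposal is correct and follows essentially the same route as the paper's own proof: transporting everything through the graded isomorphism with involution $\Phi$, reading the identity $u\cdot x=\chi_u(x)\cdot u$ blockwise for the forward direction, and defining $u$ as $\Phi^{-1}$ of the block-diagonal matrix $\left(\begin{smallmatrix} f_3 & 0\\ 0 & f_2\end{smallmatrix}\right)$ for surjectivity, with Lemma \ref{le:relatedtriples} supplying the cyclic permutations throughout. (Incidentally, your lower-left block $f_2\circ r_x\circ f_3^{-1}$ is the correct one; the paper's displayed intermediate step contains a typo there.)
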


\begin{proof}
The map is one-to-one because so is $\Phi$. For $u\in\Spin(\calC,n)$, we have $u\cdot \tau(u)=1$, so $\rho_u^\pm\in\Ort(\calC,n)$, as $\Phi$ is an isomorphism of algebras with involution. Also, for any $x\in\calC$, $u\cdot x=\chi_u(x)\cdot u$. Applying $\Phi$ to both sides, we obtain:
\[
\begin{pmatrix} \rho^-_u&0\\ 0&\rho^+_u\end{pmatrix} \begin{pmatrix} 0&l_x\\ r_x&0\end{pmatrix}
=\begin{pmatrix} 0&l_{\chi_u(x)}\\ r_{\chi_u(x)}&0\end{pmatrix}\begin{pmatrix} \rho^-_u&0\\ 0&\rho^+_u\end{pmatrix}.
\]
Thus $\rho^-_ul_x=l_{\chi_u(x)}\rho^+_u$, or $\rho^-_u(x\bullet y)=\chi_u(x)\bullet \rho^+_u(y)$, for all $x,y\in\calC$. Hence $(\rho^-_u,\chi_u,\rho^+_u)$ is related, and so is $(\chi_u,\rho^+_u,\rho^-_u)$ by Lemma \ref{le:relatedtriples}.

Conversely, let $(f_1,f_2,f_3)$ be a related triple, and let $u$ be the (even) element in $\Cl(\calC,n)$ such that $\Phi(u)=\begin{pmatrix} f_3&0\\ 0&f_2\end{pmatrix}$. Then $u\cdot \tau(u)=1$ since $\Phi$ is an isomorphism of algebras with involution. For any $x\in\calC$,
\[
\begin{split}
\Phi(u\cdot x\cdot u^{-1})&=\begin{pmatrix}f_3&0\\ 0&f_2\end{pmatrix}
\begin{pmatrix} 0&l_x\\ r_x&0\end{pmatrix}
\begin{pmatrix} f_3^{-1}&0\\ 0&f_2^{-1}\end{pmatrix}\\
&=\begin{pmatrix} 0& f_3l_xf_2^{-1}\\ f_2r_xf_2^{-1}&0\end{pmatrix}\\
&=\begin{pmatrix} 0 & l_{f_1(x)}\\ r_{f_1(x)} & 0\end{pmatrix}\\
&=\Phi\bigl(f_1(x)\bigr),
\end{split}
\]
where we have used the equations $f_3(x\bullet y)=f_1(x)\bullet f_2(y)$ and $f_2(y\bullet x)=f_3(y)\bullet f_1(x)$).
It follows that $u\in\Spin(\calC,n)$, $\chi_u=f_1$ and hence $(f_1,f_2,f_3)=(\chi_u,\rho^+_u,\rho^-_u)$.

The last assertion follows because if $(f_1,f_2,f_3)$ is related, then there is an element $u\in\Spin(\calC,n)$ such that $f_1=\chi_u\in\SOrt(\calC,n)$. But $(f_2,f_3,f_1)$ and $(f_3,f_1,f_2)$ are also related, so $f_2,f_3\in\SOrt(\calC,n)$ as well.
\end{proof}

\begin{corollary}\label{co:StabEis}
The group $\Stab_{\Aut\calA}(E_1,E_2,E_3)$ is isomorphic to $\Spin(\calC,n)$.
\end{corollary}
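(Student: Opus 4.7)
The plan is to identify $\Stab_{\Aut\calA}(E_1,E_2,E_3)$ with the group of related triples in $\Ort(\calC,n)^3$, and then invoke Theorem \ref{th:SpinRelated}.

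First I would observe that the decomposition \eqref{eq:Albert} is exactly the simultaneous Peirce decomposition of $\calA$ with respect to the orthogonal idempotents $E_1,E_2,E_3$: the relations in \eqref{eq:Albertproduct} show that $\iota_i(\calC)$ is characterized as the set of $X\in\calA$ with $E_iX=0$ and $E_{i+1}X=\frac{1}{2}X=E_{i+2}X$. Consequently, any $\vphi\in\Stab_{\Aut\calA}(E_1,E_2,E_3)$ preserves each $\iota_i(\calC)$ and $\bF E_i$, so there exist linear maps $f_i\colon\calC\to\calC$ with $\vphi(\iota_i(a))=\iota_i(f_i(a))$ for all $a\in\calC$ and $i=1,2,3$.

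Next I would use the multiplication rules \eqref{eq:Albertproduct} to constrain the $f_i$. Applying $\vphi$ to $\iota_i(a)\iota_i(b)=2n(a,b)(E_{i+1}+E_{i+2})$ gives $n(f_i(a),f_i(b))=n(a,b)$, so each $f_i\in\Ort(\calC,n)$. Applying $\vphi$ to $\iota_i(a)\iota_{i+1}(b)=\iota_{i+2}(\bar a\bar b)$ gives
\[
f_{i+2}(\bar a\bar b)=\overline{f_i(a)}\,\overline{f_{i+1}(b)}\quad\text{for all}\quad a,b\in\calC,\ i=1,2,3.
\]
For $i=1$, this says that $(f_3,f_1,f_2)$ is a related triple in the sense of Definition \ref{df:relatedtriple}, and by Lemma \ref{le:relatedtriples} so is $(f_1,f_2,f_3)$. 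Conversely, given any related triple $(f_1,f_2,f_3)$, Lemma \ref{le:relatedtriples} and the cyclic symmetry yield all three identities above, so setting $\vphi(E_i)=E_i$ and $\vphi(\iota_i(a))=\iota_i(f_i(a))$ produces a linear map that preserves every product in \eqref{eq:Albertproduct}; since these products (together with $E_iE_j=\delta_{ij}E_i$) exhaust the multiplication table of $\calA$, this $\vphi$ is an automorphism fixing $E_1,E_2,E_3$.

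Thus $\vphi\mapsto(f_1,f_2,f_3)$ is a group isomorphism from $\Stab_{\Aut\calA}(E_1,E_2,E_3)$ onto the group of related triples. Theorem \ref{th:SpinRelated} identifies the latter with $\Spin(\calC,n)$ via $u\mapsto(\chi_u,\rho_u^+,\rho_u^-)$, completing the proof. The only delicate point is the bookkeeping of cyclic indices in matching the relatedness identity with the multiplication rule for $\iota_i(a)\iota_{i+1}(b)$; once this is set up consistently, both directions of the correspondence are immediate from the formulas.
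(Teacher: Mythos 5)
Your proposal is correct and follows essentially the same route as the paper: restrict $\vphi$ to the Peirce spaces $\iota_i(\calC)$, use the multiplication rules to show the resulting triple $(f_1,f_2,f_3)$ is orthogonal and related, and invoke Theorem \ref{th:SpinRelated}. The only difference is that you spell out the converse direction (every related triple yields an automorphism), which the paper leaves implicit but which is needed for surjectivity, so this is a welcome bit of extra care rather than a deviation.
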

\begin{proof}
Any automorphism $\varphi\in\Stab_{\Aut\calA}(E_1,E_2,E_3)$ stabilizes each of the subspaces $\iota_i(\calC)=\{X\in\calA: E_{i+1}X=\frac{1}{2}X=E_{i+2}X\}$, and hence there are linear automorphisms $f_i\in\GL(\calC)$ such that $\varphi\bigl(\iota_i(x)\bigr)=\iota_i(f_i(x))$ for any $i=1,2,3$ and $x\in\calC$. But $\iota_i(x)^2=4n(x)\bigl(E_{i+1}+E_{i+2}\bigr)$, so we obtain $f_i\in\Ort(\calC,n)$ for any $i$, and $\iota_2(x)\iota_3(y)=\iota_1(x\bullet y)$ for any $x,y\in\calC$, whence it follows that $(f_1,f_2,f_3)$ is a related triple. It remains to apply Theorem \ref{th:SpinRelated}.
\end{proof}

\begin{corollary}\label{co:StabEisiota1}
The group $\Stab_{\Aut\calA}(E_1,E_2,E_3,\iota_1(1))$ is isomorphic to $\Spin(\calC_0,n)$, where $\calC_0$ denotes the space of trace zero octonions, i.e., the orthogonal complement to $1$ in $\calC$.
\end{corollary}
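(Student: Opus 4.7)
The plan is, by Corollary \ref{co:StabEis}, to identify $\Stab_{\Aut\calA}(E_1,E_2,E_3)$ with $\Spin(\calC,n)$ via the map $u\mapsto\varphi_u$, where $\varphi_u$ acts on each $\iota_i(\calC)$ by the $i$-th component of the related triple $(\chi_u,\rho_u^+,\rho_u^-)$. Under this identification, $\varphi_u$ fixes $\iota_1(1)$ if and only if $\chi_u(1)=1$, equivalently, $u$ commutes with the element $1\in\calC$ regarded inside $\Cl(\calC,n)$. Hence the subgroup to be described is $\{u\in\Spin(\calC,n)\mid u\cdot 1=1\cdot u\text{ in }\Cl(\calC,n)\}$, and I would show it coincides with $\Spin(\calC_0,n|_{\calC_0})$.

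To this end, I would exploit the orthogonal decomposition $\calC=\bF 1\perp\calC_0$. The Clifford relations give $1\cdot 1=n(1)\cdot 1_{\Cl}=1_{\Cl}$ and $1\cdot x+x\cdot 1=2n(1,x)\cdot 1_{\Cl}=0$ for every $x\in\calC_0$. Consequently the subalgebra of $\Cl(\calC,n)$ generated by $\calC_0$ is canonically isomorphic to $\Cl(\calC_0,n|_{\calC_0})$, and $\Cl(\calC,n)=\Cl(\calC_0)\oplus\Cl(\calC_0)\cdot 1$. Writing an even element $u\in\Cl(\calC,n)\subo$ as $u=a+b\cdot 1$ with $a\in\Cl(\calC_0)\subo$ and $b\in\Cl(\calC_0)\subuno$, expanding $u\cdot 1-1\cdot u$ via the anticommutation of $1$ with odd elements of $\Cl(\calC_0)$ yields, in characteristic different from $2$, that $u\cdot 1=1\cdot u$ forces $b=0$; that is, $u\in\Cl(\calC_0)\subo$.

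It then remains to verify that the spin conditions $u\cdot\tau(u)=1$ and $u\cdot\calC\cdot u^{-1}\subset\calC$ for $u\in\Cl(\calC_0)\subo$ are equivalent to the corresponding conditions defining $\Spin(\calC_0,n|_{\calC_0})$. The first transfers verbatim. For the second, $u$ automatically sends $1$ to $1$, while for $x\in\calC_0$ the element $u\cdot x\cdot u^{-1}$ lies in $\Cl(\calC_0)\subuno$; combined with $\Cl(\calC_0)\subuno\cap\calC=\calC_0$, this reduces to $u\cdot\calC_0\cdot u^{-1}\subset\calC_0$. Conversely, every $u\in\Spin(\calC_0,n|_{\calC_0})$ evidently lies in $\Spin(\calC,n)$ and commutes with $1$. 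The main delicate point is bookkeeping inside the Clifford algebra: carefully distinguishing the Cayley unit $1\in\calC$ from the unit $1_{\Cl(\calC,n)}$, and correctly tracking the interaction between the $\bZ_2$-grading on $\Cl(\calC_0)$ and its embedding into $\Cl(\calC)$, under which right multiplication by $1\in\calC$ changes parity.
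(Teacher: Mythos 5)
Your proposal is correct and follows the same route as the paper: reduce via Corollary \ref{co:StabEis} to the subgroup $\{u\in\Spin(\calC,n)\,:\,\chi_u(1)=1\}$ and identify it with $\Spin(\calC_0,n)$. The paper's proof is just this chain of identifications stated without justification, treating the identification of the $\chi$-stabilizer of the anisotropic vector $1$ with the spin group of its orthogonal complement as standard; you supply the Clifford-algebra computation (the decomposition $\Cl(\calC,n)=\Cl(\calC_0)\oplus\Cl(\calC_0)\cdot 1$ and the commutant argument) that makes this step explicit, and that computation is sound.
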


\begin{proof}
Corollary \ref{co:StabEis} provides identifications:
\[
\begin{split}
\Stab_{\Aut\calA}(E_1,E_2,E_3,\iota_1(1))&\cong \{(\chi_u,\rho^+_u,\rho^-_u):u\in\Spin(\calC,n),\ \chi_u(1)=1\},\\
&\cong \{(\chi_c,\rho^+_c,\rho^-_c): c\in\Spin(\calC_0,n)\}\\
&\cong \Spin(\calC_0,n). \qedhere
\end{split}
\]
\end{proof}

Note that for $x_1,x_2\in\calC$, we have
\[
\Phi(x_1\cdot x_2)=\begin{pmatrix} 0&l_{x_1}\\ r_{x_1}&0\end{pmatrix}
\begin{pmatrix} 0&l_{x_2}\\ r_{x_2}&0\end{pmatrix}=
\begin{pmatrix} l_{x_1}r_{x_2}&0\\ 0&r_{x_1}l_{x_2}\end{pmatrix}.
\]
If $x_1,x_2\in\calC_0$, then, for any $y\in\calC$, we compute: $x_1\bullet (y\bullet x_2)=\bar x_1\wb{\bar y\bar x_2}=\bar x_1(x_2y)=-x_1(x_2y)$. Similarly, $(x_2\bullet y)\bullet x_1=-(yx_2)x_1$. Hence, for $c=x_1\cdot x_2\cdot\ldots\cdot x_{2r}\in\Spin(\calC_0,n)$, we have
\begin{equation}\label{eq:spin7}
\begin{split}
\rho^+_{x_1\cdot x_2\cdot \ldots\cdot x_{2r}}&=(-1)^rR_{x_1}R_{x_2}\cdots R_{x_{2r}},\\
\rho^-_{x_1\cdot x_2\cdot \ldots\cdot x_{2r}}&=(-1)^rL_{x_1}L_{x_2}\cdots L_{x_{2r}},
\end{split}
\end{equation}
where $L_x$ and $R_x$ denote the left and right multiplications by $x$ in $\calC$.


\section{Construction of fine gradings on the Albert algebra}\label{se:AlbertFine}

We continue to assume that the ground field $\bF$ is algebraically closed of characteristic different from $2$. The aim of this section is to construct four fine gradings on the Albert algebra (the fourth one will exist only for $\chr{\FF}\ne 3$). If $\chr{\FF}=0$, these gradings (although presented in a somewhat different form) are known to be the only fine gradings, up to equivalence \cite{CristinaCandidoG2}. The next section will be devoted to proving the same result for $\chr{\FF}\ne 2$.

\subsection{Cartan grading}\label{ss:Cartan}
Let $G=\bZ^4$ and use additive notation. Consider the following elements in $G$:
\[
\begin{aligned}
a_1&=(1,0,0,0),\quad& a_2&=(0,1,0,0),\quad&a_3&=(-1,-1,0,0),\\
g_1&=(0,0,1,0),&g_2&=(0,0,0,1),&g_3&=(0,0,-1,-1).
\end{aligned}
\]
Then $a_1+a_2+a_3=0=g_1+g_2+g_3$. Take a ``good basis'' $\{e_1,e_2,u_1,u_2,u_3,v_1,v_2,v_3\}$ of the Cayley algebra. The assignment
\[
\degree e_1=\degree e_2=0,\quad \degree u_i=g_i=-\degree v_i
\]
gives the Cartan grading of the Cayley algebra $\calC$.

Now the assignment
\[
\begin{split}
&\degree E_i=0,\\
&\degree\iota_i(e_1)=a_i=-\degree\iota_i(e_2),\\
&\degree\iota_i(u_i)=g_i=-\degree\iota_i(v_i),\\
&\degree\iota_i(u_{i+1})=a_{i+2}+g_{i+1}=-\degree\iota_i(v_{i+1}),\\
&\degree\iota_i(u_{i+2})=-a_{i+1}+g_{i+2}=-\degree\iota_i(v_{i+2}).
\end{split}
\]
for any $i=1,2,3$, gives a $\bZ^4$-grading on the Albert algebra $\calA$. Indeed, since $\calC$ is graded by the second component of $\ZZ^2\times\ZZ^2$, it suffices to look at the first component, and by the cyclic symmetry of the product, it is enough to check that $\degree\bigl(\iota_3(\bar x\bar y)\bigr)=\degree\iota_1(x)+\degree\iota_2(y)$ for any $x,y$ in the ``good basis'' of $\calC$, and this is straightforward.

This grading will be called the \emph{Cartan grading} on $\calA$. Its type is $(24,0,1)$.

Note that $\iota_i(e_1)\iota_i(e_2)=2(E_{i+1}+E_{i+2})$ is homogeneous in any refinement of the Cartan grading. Then $E_i=(E_i+E_{i+1})(E_{i-1}+E_i)$ is homogeneous too in any refinement, and it follows that $E_1,E_2,E_3$ must be homogeneous of the same degree in any refinement. Hence the Cartan grading is fine. (Actually, this proves that it is fine not just as an abelian group grading, but as a general grading.)

Also, the elements
\begin{equation}\label{eq:generatorsCartan}
\iota_1(e_1),\ \iota_1(e_2),\, \iota_2(e_1),\, \iota_2(e_2),\, \iota_1(u_1),\, \iota_1(v_1),\,\iota_2(u_2),\,\iota_2(v_2)
\end{equation}
constitute a set of generators of $\calA$. In any grading $\Gamma: \calA=\bigoplus_{g\in G}\calA_g$ in which these elements are homogeneous, as $\iota_1(e_1)\iota_1(e_2)=2(E_2+E_3)$, we obtain that $E_2+E_3$ is homogeneous. But this is an idempotent, so its degree must be $e$, and we have $\degree\iota_1(e_1)\degree\iota_1(e_2)=e$. In the same vein, $\degree\iota_2(e_1)\degree\iota_2(e_2)=\degree\iota_1(u_1)\degree\iota_1(v_1)=\degree\iota_2(u_2)\degree\iota_2(v_2)=e$. Therefore the assignment $a_1\mapsto \degree\iota_1(e_1)$, $a_2\mapsto \degree\iota_2(e_1)$, $g_1\mapsto \degree\iota_1(u_1)$ and $g_2\mapsto \degree\iota_2(u_2)$ determines a group homomorphism $\alpha\colon \bZ^4\rightarrow G$.

This proves the following result:

\begin{theorem}\label{th:CartanGrading}
Let $\Gamma: \calA=\bigoplus_{g\in G}\calA_g$ be a grading of the Albert algebra in which the elements in \eqref{eq:generatorsCartan} are homogeneous. Then there is a group homomorphism $\alpha\colon \bZ^4\rightarrow G$ such that $\Gamma$ is the grading induced by $\alpha$ from the Cartan grading.

In particular, $\bZ^4$ is the universal group of the Cartan grading. \qed
\end{theorem}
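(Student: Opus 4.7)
The plan is to leverage the two facts already noted in the paragraph preceding the statement: the eight elements in \eqref{eq:generatorsCartan} generate $\calA$ as an algebra, and certain products among them (the expressions $\iota_i(x)\iota_i(y)$ that produce idempotent sums $E_{i+1}+E_{i+2}$) are homogeneous in any grading where the generators themselves are homogeneous.

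First I would repeat that idempotent computation to conclude that $E_1,E_2,E_3$ are each homogeneous in $\Gamma$ of degree $e$, and hence that $\deg\iota_i(e_2)=-\deg\iota_i(e_1)$ for $i=1,2$ together with $\deg\iota_1(v_1)=-\deg\iota_1(u_1)$ and $\deg\iota_2(v_2)=-\deg\iota_2(u_2)$ (the latter two coming from $\iota_i(u_j)\iota_i(v_j)=2n(u_j,v_j)(E_{i+1}+E_{i+2})$). Thus only four independent parameters remain among the eight degrees, so setting
\[
\alpha(a_1)=\deg\iota_1(e_1),\ \alpha(a_2)=\deg\iota_2(e_1),\ \alpha(g_1)=\deg\iota_1(u_1),\ \alpha(g_2)=\deg\iota_2(u_2)
\]
determines a well-defined homomorphism $\alpha\colon\ZZ^4\to G$, since $\ZZ^4$ is free on $a_1,a_2,g_1,g_2$.

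Next I would verify that each of the remaining basis elements of the Cartan grading (the $\iota_j(x)$ with $j=3$ and with $j=1,2$ for $x$ in the good basis not already among the eight generators) is homogeneous in $\Gamma$, with $\Gamma$-degree equal to $\alpha$ applied to its Cartan degree. Systematically using $\iota_i(y)\iota_{i+1}(z)=\iota_{i+2}(\bar y\bar z)$ from \eqref{eq:Albertproduct}, together with $\bar{u}_k=-u_k$, $\bar{v}_k=-v_k$, $\bar{e}_1=e_2$ and the Cayley multiplication of Figure \ref{fig:Cayley}, each such element can be written as a product of two generators. For instance, $\iota_1(e_1)\iota_2(e_1)=\iota_3(e_2)$ has $\Gamma$-degree $\alpha(a_1)+\alpha(a_2)=\alpha(-a_3)$, matching its Cartan degree; and $\iota_1(u_1)\iota_2(u_2)=\iota_3(\bar u_1\bar u_2)=\iota_3(u_1u_2)=\iota_3(v_3)$, of $\Gamma$-degree $\alpha(g_1)+\alpha(g_2)=\alpha(-g_3)$, again matching. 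The remaining cases follow the same pattern, with cyclic index shifts $i\mapsto i+1$ producing the missing $j=1,2$ entries. Once each Cartan component $V_g$ is shown to lie inside $\calA_{\alpha(g)}$, a dimension count forces $\calA_h=\bigoplus_{g:\alpha(g)=h}V_g$ for all $h\in G$, i.e.\ $\Gamma={}^\alpha\Gamma_{\mathrm{Cartan}}$.

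The universal-group assertion is then immediate: the Cartan grading itself has the eight elements of \eqref{eq:generatorsCartan} as homogeneous basis elements, so any realization of it as an abelian group grading is induced by a unique homomorphism out of $\ZZ^4$; moreover $a_1,a_2,g_1,g_2$ lie in its support, so $\ZZ^4$ is generated by the support, and uniqueness of $\alpha$ follows from this generation. The real work is the bookkeeping in the third paragraph: the off-diagonal basis elements must be treated case by case, with careful attention to the signs introduced by $\bar u_k=-u_k$ and by the cyclic-index conventions of the $\iota_i(\cdot)$; but no conceptual difficulty arises beyond what is illustrated by the two sample computations above.
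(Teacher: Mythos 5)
Your proposal is correct and follows essentially the same route as the paper: the idempotent computation forces the paired generators to have mutually inverse degrees, the assignment on $a_1,a_2,g_1,g_2$ then extends to a homomorphism $\alpha\colon\ZZ^4\to G$ by freeness, and the fact that the eight elements of \eqref{eq:generatorsCartan} generate $\calA$ gives $\Gamma={}^\alpha\Gamma_{\mathrm{Cartan}}$. The paper merely leaves the product bookkeeping of your third paragraph implicit in the statement that these elements generate the algebra.
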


\subsection{$\bZ_2^5$-grading}\label{ss:Z25}

As discussed in Section \ref{se:Cayley}, the Cayley algebra $\calC$ is obtained by repeated application of the Cayley--Dickson doubling process:
\[
\calK=\bF\oplus\bF w_1,\quad \calH=\calK\oplus\calK w_2,\quad \calC=\calH\oplus\calH w_3,
\]
with $w_i^2=1$ for $i=1,2,3$ (one may take $w_1=e_1-e_2$, $w_2=u_1-v_1$ and $w_3=u_2-v_2$), and this gives a (uniquely determined up to isomorphism) $\bZ_2^3$-grading of $\calC$ by setting $\degree w_1=(\bar 1,\bar 0,\bar 0)$, $\degree w_2=(\bar 0,\bar 1,\bar 0)$, $\degree w_3=(\bar 0,\bar 0,\bar 1)$.

Then $\calA$ is obviously $\bZ_2^5$-graded as follows:
\[
\begin{split}
\degree E_i&=(\bar 0,\bar 0,\bar 0,\bar 0,\bar 0),\ i=1,2,3\\
\degree\iota_1(x)&=(\bar 1,\bar 0,\degree x),\\
\degree\iota_2(x)&=(\bar 0,\bar 1,\degree x),\\
\degree\iota_3(x)&=(\bar 1,\bar 1,\degree x),
\end{split}
\]
for homogeneous elements $x\in \calC$. The type of this grading is $(24,0,1)$.

This grading will be referred to as the \emph{$\bZ_2^5$-grading} on $\cA$.

With the same arguments as for the Cartan grading, this grading is fine (even as a general grading).

\begin{theorem}\label{th:Z25Grading}
Let $\Gamma: \calA=\bigoplus_{g\in G}\calA_g$ be a grading of the Albert algebra in which the elements
\[
\iota_1(1),\,\iota_2(1),\, \iota_3(w_j),\; j=1,2,3,
\]
are homogeneous. Then there is a group homomorphism $\alpha\colon \bZ_2^5\rightarrow G$ such that $\Gamma$ is the grading induced by $\alpha$ from the $\bZ_2^5$-grading.

In particular $\bZ_2^5$ is the universal group of the $\bZ_2^5$-grading.
\end{theorem}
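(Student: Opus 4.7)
The strategy parallels the proof of Theorem \ref{th:CartanGrading}. First I will show that $E_1,E_2,E_3$ are $\Gamma$-homogeneous of degree $e$ and that each of the five designated elements has $\Gamma$-degree of order dividing $2$. Then I will verify that these elements generate $\calA$ as a (nonassociative) algebra, which will yield a homomorphism $\alpha\colon\bZ_2^5\to G$ inducing $\Gamma$; universality of $\bZ_2^5$ will follow formally.

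From \eqref{eq:Albertproduct} we have $\iota_i(a)^2=4n(a)(E_{i+1}+E_{i+2})$. Taking $a=1$ gives $\iota_1(1)^2=4(E_2+E_3)$ and $\iota_2(1)^2=4(E_3+E_1)$. Since $w_j^2=1$, the Cayley--Hamilton equation \eqref{eq:CayleyHamilton} forces $n(w_j,1)=0$ and $n(w_j)=-1$, so $\iota_3(w_j)^2=-4(E_1+E_2)$. As $\chr{\bF}\neq 2$, each of $E_2+E_3$, $E_3+E_1$, $E_1+E_2$ is $\Gamma$-homogeneous, and as a nonzero idempotent must lie in degree $e$. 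The identity $E_i=\tfrac12\bigl((E_i+E_{i+1})+(E_i+E_{i+2})-(E_{i+1}+E_{i+2})\bigr)$ then places each $E_i$ in degree $e$, while the squared identities force $2\deg_\Gamma\iota_1(1)=2\deg_\Gamma\iota_2(1)=2\deg_\Gamma\iota_3(w_j)=e$.

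For generation, note that the products above already place the $E_i$ in the subalgebra generated by the five designated elements. Using $\iota_i(a)\iota_{i+1}(b)=\iota_{i+2}(\bar a\bar b)$ together with $\bar 1=1$ and $\bar w_j=-w_j$, one obtains in succession: $\iota_3(1)=\iota_1(1)\iota_2(1)$; then $\iota_2(w_j)=-\iota_3(w_j)\iota_1(1)$ and $\iota_1(w_j)=-\iota_3(w_j)\iota_2(1)$; then $\iota_{i+2}(w_jw_k)=\iota_i(w_j)\iota_{i+1}(w_k)$ for $j\neq k$; and finally $\iota_k\bigl((w_jw_k)w_\ell\bigr)$ for distinct $j,k,\ell$, using $w_jw_k=-w_kw_j$. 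Since $\{1,w_1,w_2,w_3,w_1w_2,w_1w_3,w_2w_3,(w_1w_2)w_3\}$ is a basis of $\calC$, the subalgebra contains $\iota_i(x)$ for every basis element $x$ and hence equals $\calA$.

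Finally, the five elements $g_1:=\deg_{\bZ_2^5}\iota_1(1)$, $g_2:=\deg_{\bZ_2^5}\iota_2(1)$, $h_j:=\deg_{\bZ_2^5}\iota_3(w_j)$ ($j=1,2,3$) form a $\bZ_2$-basis of $\bZ_2^5$ by inspection, so I can define $\alpha\colon\bZ_2^5\to G$ on this basis by matching $\Gamma$-degrees; this is well-posed because each image has order dividing $2$. By construction the five designated elements have equal degrees in $\Gamma$ and ${}^\alpha(\text{$\bZ_2^5$-grading})$; the generation step together with multiplicativity propagates the equality to every $\bZ_2^5$-homogeneous basis vector of $\calA$, whence $\Gamma={}^\alpha(\text{$\bZ_2^5$-grading})$. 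For universality, apply the statement to the $\bZ_2^5$-grading itself viewed as a $U(\Gamma)$-grading via the canonical epimorphism $\pi\colon U(\Gamma)\twoheadrightarrow\bZ_2^5$: the resulting $\alpha\colon\bZ_2^5\to U(\Gamma)$ satisfies $\pi\circ\alpha=\id$ on the support (hence on $\bZ_2^5$), and $\alpha$ is surjective because $\im\alpha$ contains the support of the $U(\Gamma)$-grading, which generates $U(\Gamma)$; therefore $U(\Gamma)\cong\bZ_2^5$. The only substantive difficulty I anticipate is keeping track of signs in the generation step.
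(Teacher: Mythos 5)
Your proposal is correct and follows essentially the same route as the paper's (much terser) proof: square each designated generator to land on an idempotent of degree $e$, conclude the five degrees have order at most $2$, and invoke generation of $\calA$ to produce the homomorphism $\alpha$. The extra details you supply --- the explicit generation chain, the check that the five $\bZ_2^5$-degrees form a basis, and the formal universality argument --- are all accurate and merely flesh out what the paper leaves implicit.
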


\begin{proof}
Since $\iota_1(1)$ is homogeneous for $\Gamma$, so is $\iota_1(1)^2=4(E_2+E_3)$. But $E_2+E_3$ is an idempotent, so its degree must be $e$, and hence the degree of $\iota_1(1)$ has order $\le 2$. The same happens to all the homogeneous elements above, and since these elements constitute a set of generators of $\calA$, the result follows.
\end{proof}

\subsection{$\bZ\times\bZ_2^3$-grading}\label{ss:ZZ23}

Take an element $\bi\in\bF$ with $\bi^2=-1$ and consider the following elements in $\calA$:
\[
\begin{split}
E&=E_1,\\
\wt{E}&=1-E=E_2+E_3,\\
\nu(a)&=\bi\iota_1(a)\quad\text{for all}\quad a\in\calC_0,\\
\nu_{\pm}(x)&=\iota_2(x)\pm \bi\iota_3(\bar x)\quad\text{for all}\quad x\in \calC,\\
S^{\pm}&=E_3-E_2\pm\frac{\bi}{2}\iota_1(1).
\end{split}
\]
These elements span $\calA$, and the multiplication is given by:
\begin{equation}\label{eq:nu_model}
\begin{split}
&E\wt{E}=0,\quad ES^{\pm}=0,\quad E\nu(a)=0,\quad E\nu_{\pm}(x)=\frac{1}{2}\nu_{\pm}(x),\\
&\wt{E} S^{\pm}=S^{\pm},\quad \wt{E}\nu(a)=\nu(a),\quad \wt{E}\nu_{\pm}(x)=\frac{1}{2}\nu_{\pm}(x),\\
&S^+S^-=2\wt{E},\quad S^{\pm}\nu(a)=0,\quad S^{\pm}\nu_{\mp}(x)=\nu_{\pm}(x),\quad S^{\pm}\nu_{\pm}(x)=0,\\
&\nu(a)\nu(b)=-2n(a,b)\wt{E},\quad \nu(a)\nu_{\pm}(x)=\pm\nu_{\pm}(xa),\\
&\nu_{\pm}(x)\nu_{\pm}(y)=2n(x,y)S^{\pm},\quad\nu_+(x)\nu_-(y)=2n(x,y)(2E+\wt{E})+\nu(\bar xy-\bar yx),
\end{split}
\end{equation}
for any $x,y\in\calC$ and $a,b\in \calC_0$.

There appears a $\bZ$-grading on $\calA$:
\begin{equation}\label{eq:ZGrading}
\calA=\calA_{-2}\oplus\calA_{-1}\oplus\calA_0\oplus\calA_1\oplus\calA_2,
\end{equation}
with $\calA_{\pm 2}=\bF S^{\pm}$, $\calA_{\pm 1}=\nu_{\pm}(\calC)$, and $\calA_0=\bF E\oplus\Bigl(\bF \wt{E}\oplus \nu(\calC_0)\Bigr)$.
Note that the subspace $\bF\wt{E}\oplus\nu(\calC_0)$ is the Jordan algebra of the quadratic form $-4n\vert_{\calC_0}$, with unity $\wt{E}$.

The $\bZ_2^3$-grading on $\calC$ considered previously combines with this $\bZ$-grading to give a $\bZ\times\bZ_2^3$-grading as follows:
\[
\begin{split}
\degree S^\pm&=(\pm 2,\bar 0,\bar 0,\bar 0),\\
\degree\nu_{\pm}(x)&=(\pm 1,\degree x),\\
\degree E&=0=\degree\wt{E},\\
\degree\nu(a)&=(0,\degree a),
\end{split}
\]
for homogeneous elements $x\in\calC$ and $a\in\calC_0$.

This grading will be referred to as the \emph{$\bZ\times\bZ_2^3$-grading} on $\cA$. Its type is $(25,1)$ and again it is fine (even as a general grading).

\begin{theorem}\label{th:ZZ23Grading}
Let $\Gamma: \calA=\bigoplus_{g\in G}\calA_g$ be a grading of the Albert algebra in which the elements
\[
\nu_{\pm}(1),\quad \nu(w_j),\; j=1,2,3,
\]
are homogeneous. Then there is a group homomorphism $\alpha\colon \bZ\times \bZ_2^3\rightarrow G$ such that $\Gamma$ is the grading induced by $\alpha$ from the $\bZ\times\bZ_2^3$-grading.

In particular $\bZ\times \bZ_2^3$ is the universal group of the $\bZ\times\bZ_2^3$-grading.
\end{theorem}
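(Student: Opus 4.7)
I would follow the pattern of the proofs of Theorems~\ref{th:CartanGrading} and~\ref{th:Z25Grading}: first verify that the listed elements generate $\calA$ as an algebra, then read off the torsion constraints on their $\Gamma$-degrees from the multiplication rules in \eqref{eq:nu_model}, and finally define the homomorphism $\alpha\colon\bZ\times\bZ_2^3\to G$ matching those degrees.

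For generation, the Cayley algebra $\calC$ is obtained from $\bF$ by three Cayley--Dickson doublings using $w_1, w_2, w_3$, so its subalgebra generated by $1, w_1, w_2, w_3$ is all of $\calC$. The identity $\nu(a)\nu_{\pm}(x)=\pm\nu_{\pm}(xa)$ from \eqref{eq:nu_model} then produces $\nu_{\pm}(x)$ for every $x\in\calC$ starting from $\nu_{\pm}(1)$ and the $\nu(w_j)$. The relation $\nu_{\pm}(1)^2=2S^{\pm}$ yields $S^{\pm}$; specializing $\nu_+(x)\nu_-(y)$ to $x=1$, $y=a\in\calC_0$ (where $n(1,a)=0$ and $\bar 1\cdot a-\bar a\cdot 1=2a$) gives $\nu_+(1)\nu_-(a)=2\nu(a)$, recovering $\nu(a)$ for every $a\in\calC_0$; and $\nu(w_1)^2=4\wt{E}$ together with the unit $1=E+\wt{E}$ of $\calA$ yields the idempotents $E$ and $\wt{E}$. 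Thus the listed generators produce a spanning set for $\calA$.

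For the degree constraints, $n(w_j,w_j)=-2$, so the element $\nu(w_j)^2=4\wt{E}$ is $\Gamma$-homogeneous of degree $2\deg\nu(w_j)$; as $\wt{E}$ is a nonzero idempotent, its degree must be $e$, forcing each $\deg\nu(w_j)$ to have order at most $2$. Since the unit $1$ is always homogeneous of degree $e$ in a graded unital algebra, $E=1-\wt{E}$ is homogeneous of degree $e$ as well. Consequently, $\nu_+(1)\nu_-(1)=4E+2\wt{E}$ is homogeneous of degree $e$, forcing $\deg\nu_+(1)$ and $\deg\nu_-(1)$ to be mutual inverses in $G$.

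These are precisely the relations needed to make the assignment sending the generator of the $\bZ$ factor to $\deg\nu_+(1)$ and the three canonical generators of $\bZ_2^3$ to $\deg\nu(w_1), \deg\nu(w_2), \deg\nu(w_3)$ extend to a group homomorphism $\alpha\colon\bZ\times\bZ_2^3\to G$. By construction, the induced $G$-grading ${}^{\alpha}\Gamma_{\bZ\times\bZ_2^3}$ agrees with $\Gamma$ on each of the listed generators of $\calA$, and hence on all of $\calA$. The universal property of $\bZ\times\bZ_2^3$ then follows by applying this to the tautological $\bZ\times\bZ_2^3$-grading. I anticipate no serious obstacle: the only step requiring real care is the verification of generation, which only needs combining the Cayley--Dickson construction of $\calC$ with enough of the identities in \eqref{eq:nu_model}.
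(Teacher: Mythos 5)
Your proposal is correct and follows essentially the same route as the paper, which likewise argues that the listed elements generate $\calA$, that $\deg\nu(w_j)$ has order at most $2$ (via the idempotent $\wt{E}$), and that $\deg\nu_+(1)\deg\nu_-(1)=e$, then defines $\alpha$ on the generators; you have simply written out the generation argument that the paper leaves implicit. The only slip is the harmless constant $\nu_\pm(1)^2=2n(1,1)S^\pm=4S^\pm$ rather than $2S^\pm$.
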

\begin{proof}
As in Theorem \ref{th:Z25Grading}, if $\nu(w_j)$ is homogeneous for $\Gamma$, then its degree has order $\le 2$, and as in Theorem \ref{th:CartanGrading}, if $\nu_{\pm}(1)$ is homogeneous, then $\degree\nu_+(1)\degree\nu_-(1)=e$. Since the elements above constitute a set of generators of $\calA$, the result follows.
\end{proof}

\begin{remark}\label{re:StabEiiota1}
Note that the stabilizer $\Stab_{\Aut\calA}(E_1,E_2,E_3,\iota_1(1))$, which is isomorphic to $\Spin(\calC_0,n)$ by Corollary \ref{co:StabEisiota1}, coincides with $\Stab_{\Aut\calA}(E,S^+,S^-)$. Also, relative to the $\bZ$-grading in equation \eqref{eq:ZGrading}:
\[
\calA_{\pm 1}=\{X\in\calA\;|\; S^{\mp}X=0,\, EX=\frac{1}{2}X\},\
\nu(\calC_0)=\{X\in\calA\;|\; S^{\pm}X=0=EX\}.
\]
Hence $\Stab_{\Aut \calA}(E_1,E_2,E_3,\iota_1(1))$ stabilizes the $\bZ$-grading. Moreover, given any $c=x_1\cdot x_2\cdot\ldots\cdot x_{2r}\in\Spin(\calC_0,n)$, i.e.,  $x_j\in \calC_0$ for any $j$ and $n(x_1)n(x_2)\cdots n(x_{2r})=1$, the corresponding automorphism $\varphi_c$ in $\Stab_{\Aut \calA}(E_1,E_2,E_3,\iota_1(1))$ fixes $E_i$, $i=1,2,3$, acts as $\chi_c$ on $\iota_1(\calC)$, as $\rho^+_c=(-1)^rR_{x_1}R_{x_2}\cdots R_{x_{2r}}$ on $\iota_2(\calC)$ and as $\rho^-_c=(-1)^rL_{x_1}L_{x_2}\cdots L_{x_{2r}}$ on $\iota_3(\calC)$ --- see \eqref{eq:spin7}. But $\nu_{\pm}(x)=\iota_2(x)\pm \bi\iota_3(\bar x)$, so for all $x\in \calC$, we have:
\[
\begin{split}
\varphi_c(\nu_{\pm}(x))&=(-1)^r\Bigl(\iota_2\bigl(((xx_{2r})\cdots)x_1\bigr)\pm \bi\iota_3\bigl(x_1(\cdots(x_{2r}\bar x))\bigr)\Bigr)\\
&=(-1)^r\Bigl(\iota_2\bigl(((xx_{2r})\cdots)x_1\bigr)\pm \bi\iota_3\bigl(\wb{((xx_{2r})\cdots)x_1}\bigr)\Bigr)\\
&=\nu_{\pm}(\rho^+_c(x)).
\end{split}
\]
\end{remark}

\subsection{$\bZ_3^3$-grading}\label{ss:Z33}

Define an order $3$ automorphism $\tau$ of $\calC$ that acts on the elements of a ``good basis'' of $\calC$ as follows:
\[
\tau(e_i)=e_i,\quad \tau(u_j)=u_{j+1},\quad \tau(v_j)=v_{j+1}
\]
for $i=1,2$ and $j=1,2,3$, and a new multiplication on $\calC$:
\[
x*y=\tau(\bar x)\tau^2(\bar y),
\]
for all $x,y\in \calC$. Then $n(x*y)=n(x)n(y)$ for any $x,y$, since $\tau$ preserves the norm. Moreover, for any $x,y,z\in \calC$:
\[
\begin{split}
n(x*y,z)&=n(\tau(\bar x)\tau^2(\bar y),z)\\
    &=n(\tau(\bar x),z\tau^2(y))\\
    &=n(\bar x,\tau^2(z)\tau(y))\\
    &=n(x,\tau(\bar y)\tau^2(\bar z))\\
    &=n(x,y*z).
\end{split}
\]
Hence $(\calC,*,n)$ is a symmetric composition algebra (see \cite{ElduqueGrSym} or \cite[Chapter VIII]{KMRT}). Actually, $(\calC,*)$ is the Okubo algebra over $\bF$. Its multiplication table is shown in Figure \ref{fig:Okubo}.
\begin{figure}
\[
\vbox{\offinterlineskip
\halign{\hfil$#$\enspace\hfil&#\vreglon
 &\hfil\enspace$#$\enspace\hfil
 &\hfil\enspace$#$\enspace\hfil&#\vregleta
 &\hfil\enspace$#$\enspace\hfil
 &\hfil\enspace$#$\enspace\hfil&#\vregleta
 &\hfil\enspace$#$\enspace\hfil
 &\hfil\enspace$#$\enspace\hfil&#\vregleta
 &\hfil\enspace$#$\enspace\hfil
 &\hfil\enspace$#$\enspace\hfil&#\vreglon\cr
 &\omit\hfil\vrule width 1pt depth 4pt height 10pt
   &e_1&e_2&\omit&u_1&v_1&\omit&u_2&v_2&\omit&u_3&v_3&\omit\cr
 \noalign{\hreglon}
 e_1&&e_2&0&&0&-v_3&&0&-v_1&&0&-v_2&\cr
 e_2&&0&e_1&&-u_3&0&&-u_1&0&&-u_2&0&\cr
 &\multispan{12}{\hregletafill}\cr
 u_1&&-u_2&0&&v_1&0&&-v_3&0&&0&-e_1&\cr
 v_1&&0&-v_2&&0&u_1&&0&-u_3&&-e_2&0&\cr
 &\multispan{12}{\hregletafill}\cr
 u_2&&-u_3&0&&0&-e_1&&v_2&0&&-v_1&0&\cr
 v_2&&0&-v_3&&-e_2&0&&0&u_2&&0&-u_1&\cr
 &\multispan{12}{\hregletafill}\cr
 u_3&&-u_1&0&&-v_2&0&&0&-e_1&&v_3&0&\cr
 v_3&&0&-v_1&&0&-u_2&&-e_2&0&&0&u_3&\cr
 &\multispan{13}{\hreglonfill}\cr}}
\]
\caption{Multiplication table of the Okubo algebra}\label{fig:Okubo}
\end{figure}

This Okubo algebra is $\bZ_3^2$-graded by setting $\degree e_1=(\bar 1,\bar 0)$ and $\degree u_1=(\bar 0,\bar 1)$, with the degrees of the remaining elements being uniquely determined.

Assume now that $\chr{\bF}\ne 3$. Then this $\bZ_3^2$-grading is determined by two commuting order $3$ automorphisms $\varphi_1,\varphi_2\in\Aut(\calC,*)$:
\[
\begin{aligned}
\varphi_1(e_1)&=\omega e_1,\quad &\varphi_1(u_1)&=u_1,\\
\varphi_2(e_1)&=e_1,\quad &\varphi_2(u_1)&=\omega u_1,
\end{aligned}
\]
where $\omega$ is a primitive third root of unity in $\bF$.

Define now $\tilde\iota_i(x)=\iota_i(\tau^i(x))$ for all $i=1,2,3$ and $x\in\calC$. Then the multiplication in the Albert algebra $\calA=\oplus_{i=1}^3\bigl(\bF E_i\oplus \tilde\iota_i(\calC)\bigr)$ is given by:
\begin{equation}\label{eq:AlbertOkubo}
\begin{split}
&E_i^2=E_i,\quad E_iE_{i+1}=0,\\
&E_i\tilde\iota_i(x)=0,\quad E_{i+1}\tilde\iota_i(x)=\frac{1}{2}\tilde\iota_i(x)=E_{i+2}\tilde\iota_i(x),\\
&\tilde\iota_i(x)\tilde\iota_{i+1}(y)=\tilde\iota_{i+2}(x*y),\quad
\tilde\iota_i(x)\tilde\iota_i(y)=2n(x,y)(E_{i+1}+E_{i+2}),
\end{split}
\end{equation}
for $i=1,2,3$ and $x,y\in\calC$.

The commuting order $3$ automorphisms $\varphi_1$, $\varphi_2$ of $(\calC,*)$ extend to commuting order $3$ automorphisms of $\calA$ (which will be denoted by the same symbols) as follows: $\varphi_j(E_i)=E_i$, $\varphi_j\bigl(\tilde\iota_i(x)\bigr)=\tilde\iota_i(\varphi_j(x))$ for all $i=1,2,3$, $j=1,2$ and $x\in \calC$. On the other hand, the linear map $\varphi_3\in\End_\bF(\calA)$ defined by
\[
\varphi_3(E_i)=E_{i+1},\quad \varphi_3\bigl(\tilde\iota_i(x)\bigr)=\tilde\iota_{i+1}(x),
\]
for all $i=1,2,3$ and $x\in\calC$, is another order $3$ automorphism, which commutes with $\varphi_1$ and $\varphi_2$. The subgroup of $\Aut(\calA)$ generated by $\varphi_1,\varphi_2,\varphi_3$ is isomorphic to $\bZ_3^3$ and induces a $\bZ_3^3$-grading on $\calA$ of type $(27)$. This grading is obviously fine, and $\bZ_3^3$ is its universal group.

This grading will be referred to as the \emph{$\bZ_3^3$-grading} on $\cA$ ($\chr{\FF}\ne 3$).

\begin{remark}\label{re:TitsConstruction}
We may define the elements
\[
\begin{split}
\rho_{\bar 0}(x)&=\tilde\iota_1(x)+\tilde\iota_2(x)+\tilde\iota_3(x),\\
\rho_{\bar 1}(x)&=\tilde\iota_1(x)+\omega^2\tilde\iota_2(x)+\omega\tilde\iota_3(x),\\
\rho_{\bar 2}(x)&=\tilde\iota_1(x)+\omega\tilde\iota_2(x)+\omega^2\tilde\iota_3(x),
\end{split}
\]
for any $x\in \calC$. Then the eigenspaces of $\varphi_3$ are:
\[
\begin{split}
\calA_{\bar 0}&=\bF 1\oplus \rho_{\bar 0}(\calC)\quad (1=E_1+E_2+E_3),\\
\calA_{\bar 1}&=\bF(E_1+\omega^2E_2+\omega E_3)\oplus \rho_{\bar 1}(\calC),\\
\calA_{\bar 2}&=\bF(E_1+\omega E_2+\omega^2 E_3)\oplus \rho_{\bar 2}(\calC).
\end{split}
\]
The subalgebra $\calA_{\bar 0}$ is isomorphic to the Jordan algebra $M_3(\bF)^+$, the $3\times 3$ matrices with the symmetrized product, and the decomposition $\calA=\calA_{\bar 0}\oplus\calA_{\bar 1}\oplus\calA_{\bar 2}$ gives the \emph{First Tits Construction} of $\calA$ (see \cite[p.~412]{Jacobson}).
\end{remark}


\section{Classification of gradings on the Albert algebra}\label{se:classification}

The aim of this section is to classify the fine gradings on the Albert algebra $\cA$ up to equivalence and then, for any abelian group $G$, all $G$-gradings on $\cA$ up to isomorphism. Throughout this section, we will assume that the ground field $\FF$ is algebraically closed of characteristic different from $2$.

\begin{theorem}\label{th:Main}
Let $\cA$ be the Albert algebra over an algebraically closed field $\FF$, $\chr{F}\ne 2$. Then, up to equivalence, the fine abelian group gradings on $\cA$, their universal groups and types are the following:
\begin{itemize}
\item The Cartan grading $\Gamma_\cA^1$ defined in \S\ref{ss:Cartan}; universal group $\bZ^4$; type $(24,0,1)$.

\item The grading $\Gamma_\cA^2$ defined in \S\ref{ss:Z25}; universal group $\bZ_2^5$; type $(24,0,1)$.

\item The grading $\Gamma_\cA^3$ defined in \S\ref{ss:ZZ23}; universal group $\bZ\times \bZ_2^3$; type $(25,1)$.

\item If $\chr{\FF}\ne 3$, then also the grading $\Gamma_\cA^4$ defined in \S\ref{ss:Z33}; universal group $\bZ_3^3$; type $(27)$.
\end{itemize}
\end{theorem}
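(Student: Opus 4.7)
The strategy is to classify fine $G$-gradings on $\cA$ (with $G=U(\Gamma)$) via the structure of the identity component $\cA_e$. By Corollary \ref{co:trace}, $\cA_e$ is a unital semisimple Jordan subalgebra of $\cA$, so its degree $d$ satisfies $d\in\{1,2,3\}$. I will treat the three cases in turn and identify the resulting grading, up to equivalence, with one of $\Gamma_\cA^1,\ldots,\Gamma_\cA^4$.

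Suppose $d=3$. Then $1=E_1+E_2+E_3$ decomposes into three orthogonal absolutely primitive idempotents, and after conjugation by an automorphism we may take these to be the standard ones of \eqref{eq:Albert}. The Peirce decomposition $\cA=\bigoplus_{i=1}^3\FF E_i\oplus\bigoplus_{i=1}^3\iota_i(\cC)$ is a refinement of $\Gamma$, so each $\iota_i(\cC)\cong\cC$ inherits a $G$-grading. The relation $\iota_i(x)\iota_i(y)=2n(x,y)(E_{i+1}+E_{i+2})$ shows the norm is respected, so by Theorem \ref{th:grOctonions} each induced grading on $\cC$ is either a coarsening of the Cartan grading or of the Cayley--Dickson $\bZ_2^3$-grading. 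The intertwining $\iota_i(x)\iota_{i+1}(y)=\iota_{i+2}(\bar x\bar y)$ forces the three Cayley gradings to be of the same type. Combined with fineness of $\Gamma$, the generating-set characterization of Theorem \ref{th:CartanGrading} (respectively Theorem \ref{th:Z25Grading}) then identifies $\Gamma$ with $\Gamma_\cA^1$ (respectively $\Gamma_\cA^2$).

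Suppose $d=2$. Then $\cA_e\cong\FF\times\FF$ by Corollary \ref{co:trace}, spanned by orthogonal idempotents $E$ and $\tilde E=1-E$ of ranks $1$ and $2$ in $\cA$; after conjugation we may assume $E=E_1$ and $\tilde E=E_2+E_3$. The Peirce-$0$ subspace $\FF\tilde E\oplus\iota_1(\cC)$ is graded, and its identity component contains only $\FF\tilde E$, so the induced grading on $\iota_1(\cC)$ has trivial identity component; by Theorem \ref{th:grOctonions} this forces it to be the Cayley--Dickson $\bZ_2^3$-grading. Viewing the Peirce-$\frac{1}{2}$ subspace $\iota_2(\cC)\oplus\iota_3(\cC)$ as a module for $\Stab_{\Aut\cA}(E_1,E_2,E_3,\iota_1(1))\cong\Spin(\cC_0,n)$ (Corollary \ref{co:StabEisiota1}) and applying the explicit formula of Remark \ref{re:StabEiiota1}, I recognize the induced grading on this half-eigenspace as the $\nu_\pm$-decomposition of \S\ref{ss:ZZ23}. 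Theorem \ref{th:ZZ23Grading} then yields equivalence with $\Gamma_\cA^3$.

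Suppose $d=1$, so $\cA_e=\FF\cdot 1$. For any $g\ne e$ in $\supp\Gamma$ and $X\in\cA_g$, Theorem \ref{th:trace} gives $T(X)=0$, so the generic degree-$3$ relation reduces to $X^3+S(X)X=N(X)\cdot 1$. A careful matching of the homogeneous degrees $g^3$, $g$, $e$ of the three summands shows $g$ must have order $3$: an element of order $2$ yields $X^2\in\FF$, hence (for $X^2\ne 0$) nonscalar orthogonal idempotents $e_\pm=\tfrac12(1\pm X/\sqrt{X^2})$ whose Peirce decomposition is incompatible with fineness and $\cA_e=\FF$; an element of higher order makes all summands homogeneous of distinct degrees and hence individually zero, leading to similar collapse. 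Thus $G$ is $3$-torsion, forcing $\chr{\FF}\ne 3$; fineness and $\dim\cA=27$ then give $|G|=27$ and $G\cong\bZ_3^3$. Recognizing the Okubo multiplication on the $1$-dimensional graded components (via the First Tits Construction, Remark \ref{re:TitsConstruction}) gives equivalence with $\Gamma_\cA^4$. The main obstacles in this plan are the compatibility analysis of the three Cayley gradings in Case $d=3$ (particularly ruling out mixed types) and the detailed exclusion of non-$3$-torsion elements from $G$ in Case $d=1$.
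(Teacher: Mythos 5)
Your overall strategy --- splitting into cases by the degree of $\cA_e$ via Corollary \ref{co:trace} --- is exactly the paper's, but each case has a genuine gap. The most serious one, which occurs in both the degree-$3$ and degree-$2$ cases, is the claim that $\iota_i(\cC)$ ``inherits a $G$-grading'' of the Cayley algebra so that Theorem \ref{th:grOctonions} applies. The subspace $\iota_i(\cC)$ is only a graded \emph{subspace}; it is not closed under the product of $\cA$, and the identity $\iota_i(x)\iota_i(y)=2n(x,y)(E_{i+1}+E_{i+2})$ only makes the \emph{quadratic space} $(\cC,n)$ graded, which is far weaker than an algebra grading. To turn the decomposition of $\iota_3(\cC)$ into an algebra grading of $\cC$ via $\iota_1(x)\iota_2(y)=\iota_3(\bar x\bar y)$ one must first arrange that $\iota_1(1)$ and $\iota_2(1)$ (or a full hyperbolic homogeneous basis) are homogeneous; this requires the case split on whether some homogeneous $\iota_i(x)$ is isotropic and the normalization by related triples (Lemma \ref{le:iota12(1)} and Theorem \ref{th:SpinRelated}), which is where the real work of \S\ref{ss:Degree3} lies. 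In the degree-$2$ case the claim is actually false as stated: in $\Gamma^3_\cA$ itself $\iota_1(\cC)$ is \emph{not} a graded subspace (only $\FF(E_2-E_3)\oplus\iota_1(\cC)$ is), and the homogeneous pieces $\FF S^{\pm}$ of that subspace have degrees of infinite order, so the induced decomposition is not the Cayley--Dickson $\ZZ_2^3$-grading. The paper must also rule out an entire a priori possible subcase (all components of $\cV$ one-dimensional with square the identity), which your sketch does not address.

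In the degree-$1$ case your exclusion of elements of order $\geq 4$ does not work as written: with $T(X)=S(X)=0$ the generic equation gives only $X^3=N(X)1$, and if $g^3\ne e$ this yields $X^3=0$ and $N(X)=0$ for each homogeneous $X$, which is not by itself a contradiction. The missing idea is to pick $X\in\cA_g$, $Y\in\cA_{g^{-1}}$ with $XY=1$ (possible by nondegeneracy of the trace form), use the linearization of \eqref{eq:generic3} to get $X^2Y=X$, conclude that $X$ is Jordan-invertible, and hence $X^3=N(X)1\ne 0$, forcing $g^3=e$; the invertibility of $U_X$ is then also what gives $\dim\cA_g=1$ and that $\supp\Gamma$ is a subgroup --- neither of which follows from ``fineness and $\dim\cA=27$'' alone. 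Similarly, the order-$2$ exclusion is cleaner via the induced $G/\langle g\rangle$-grading and Corollary \ref{co:trace}: one gets $\cA_e\oplus\cA_g=\FF E\oplus\FF(1-E)$ with $T(E)=1$, whence $T(\cA_g)\ni T(1-2E)=1\ne 0$, contradicting Theorem \ref{th:trace}; your idempotents $e_{\pm}$ are not homogeneous and ``incompatible with fineness'' is not an argument. You have correctly identified the skeleton, but the load-bearing steps are precisely the ones left unproved.
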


We already know that the gradings $\Gamma^j_\cA$, $j=1,2,3,4$, are fine (Theorems \ref{th:CartanGrading}, \ref{th:Z25Grading} and \ref{th:ZZ23Grading} for $j=1,2,3$, obvious for $\Gamma_\cA^4$). So it will suffice to show that any grading $\Gamma: \calA=\bigoplus_{g\in G}\calA_g$ of the Albert algebra is induced from $\Gamma^j_\cA$ for some $j=1,2,3,4$ ($j\ne 4$ if $\chr{\FF}=3$), by a homomorphism $U(\Gamma^j_\cA)\to G$.
The proof will be divided into cases according to the degree of the semisimple subalgebra $\calA_e$, which can be $1$, $2$ or $3$ (see Corollary \ref{co:trace}).

\subsection{Degree $3$}\label{ss:Degree3}

In case the degree of $\calA_e$ is $3$, $\calA_e$ contains three orthogonal primitive idempotents, and the coordinatization results in \cite[\S III.2 and \S IX.1]{Jacobson} show that we may assume that $E_1,E_2,E_3$ are in $\calA_e$. Hence the subspaces $\iota_i(\calC)=\{X\in \calA: E_{i+1}X=E_{i+2}X=\frac{1}{2}X\}$ are graded subspaces of $\calA$, $i=1,2,3$.

Assume first that for some $i$ there is a basis of $\iota_i(\calC)$ consisting of homogeneous elements: $\{\iota_i(x_j),\iota_i(y_j): j=1,2,3,4\}$  such that $n(x_j,y_k)=\delta_{ij}$, $n(x_j,x_k)=0=n(y_j,y_k)$ (a basis consisting of four orthogonal hyperbolic pairs). This is the case if all the homogeneous components of $\iota_i(\calC)$ are isotropic for the trace form (recall $T(\iota_i(x)\iota_i(y))=4n(x,y)$ for any $x,y\in\calC$ and any $i=1,2,3$). We may assume $i=1$. There is an element $f_1\in\SOrt(\calC,n)$ which takes this basis to our ``good basis'' $\calB=\{e_1,e_2,u_1,u_2,u_3,v_1,v_2,v_3\}$ of $\calC$. Take $c\in\Spin(\calC,n)$ such that $f_1=\chi_c$ and consider the automorphism in $\Stab_{\Aut\calA}(E_1,E_2,E_3)$ determined by the related triple $(\chi_c,\rho^+_c,\rho^-_c)$ (see Corollary \ref{co:StabEis}).

Therefore we may assume, through this automorphism, that all the elements $\iota_1(e_j)$, $\iota_1(u_i)$ and $\iota_1(v_i)$, for $j=1,2$ and $i=1,2,3$, are homogeneous. Then
\[
\iota_1(v_1)\bigl(\iota_1(v_2)\bigl(\iota_1(v_3)\iota_3(\calC)\bigr)\bigr)=
\iota_2(((\calC v_3)v_2)v_1)=\bF\iota_2(e_1),
\]
and this proves, since $\iota_3(\calC)$ is a graded subspace, that $\iota_2(e_1)$ is homogeneous. In the same vein, we get that $\iota_2(e_2)$, $\iota_3(e_1)$ and $\iota_3(e_2)$ are homogeneous. Finally, $\iota_2(u_2)=-\iota_3(e_2)\iota_1(u_2)$ and $\iota_2(v_2)=-\iota_3(e_1)\iota_1(v_2)$ are homogeneous too.

Theorem \ref{th:CartanGrading} finishes the proof in this case.

\smallskip

Otherwise, in each $\iota_i(\calC)$ we may find some homogeneous element $\iota_i(x_i)$ with $n(x_i)\ne 0$, and we may scale it to get $n(x_i)=1$.

\begin{lemma}\label{le:iota12(1)}
Let $x_1,x_2\in \calC$ be elements of norm $1$, then there is an automorphism $\varphi\in\Stab_{\Aut\calA}(E_1,E_2,E_3)$ such that $\varphi(\iota_i(x_i))=\iota_i(1)$, for $i=1,2$.
\end{lemma}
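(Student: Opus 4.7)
My plan is to use Corollary \ref{co:StabEis} together with Theorem \ref{th:SpinRelated} to identify the elements of $\Stab_{\Aut\calA}(E_1,E_2,E_3)$ with the related triples $(\chi_u,\rho_u^+,\rho_u^-)$ parametrised by $u\in\Spin(\calC,n)$, where the associated automorphism acts on $\iota_i(\calC)$ through the $i$-th component of its related triple. The lemma thus reduces to producing $u\in\Spin(\calC,n)$ with $\chi_u(x_1)=1$ and $\rho_u^+(x_2)=1$, and I would construct such a $u$ in two stages, matching the two conditions.

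In the first stage, since $\chi\colon\Spin(\calC,n)\to\SOrt(\calC,n)$ is surjective and $\SOrt(\calC,n)$ acts transitively on the set of norm-$1$ vectors of the nondegenerate $8$-dimensional quadratic space $(\calC,n)$ (Witt's extension theorem over an algebraically closed field), I would pick $u_1\in\Spin(\calC,n)$ with $\chi_{u_1}(x_1)=1$. The associated automorphism $\varphi_1\in\Stab_{\Aut\calA}(E_1,E_2,E_3)$ sends $\iota_1(x_1)$ to $\iota_1(1)$ and $\iota_2(x_2)$ to $\iota_2(z)$ for some $z\in\calC$ with $n(z)=1$. In the second stage, by Corollary \ref{co:StabEisiota1} and Remark \ref{re:StabEiiota1}, the further stabilizer $\Stab_{\Aut\calA}(E_1,E_2,E_3,\iota_1(1))$ is identified with $\Spin(\calC_0,n)$ and acts on $\iota_2(\calC)$ via $\rho^+$; I would then pick $u_2\in\Spin(\calC_0,n)$ with $\rho_{u_2}^+(z)=1$ and take $\varphi=\varphi_2\varphi_1$, where $\varphi_2$ is the automorphism corresponding to $u_2$.

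The crux is the existence of $u_2$, which amounts to the transitivity of $\Spin(\calC_0,n)\cong\Spin(7)$, acting on $\calC$ through the $8$-dimensional spin representation $\rho^+$, on the quadric $Q=\{z\in\calC\mid n(z)=1\}$. I would establish this by a dimension count: the stabilizer of $1\in Q$ inside $\Spin(\calC_0,n)$ is (the connected component of) a group of type $G_2$ of dimension $14$---arising as the intersection of two triality-related $\Spin(7)$-subgroups inside $\Spin(\calC,n)\cong\Spin(8)$---so the orbit of $1$ under $\rho^+(\Spin(\calC_0,n))$ has dimension $21-14=7$, coinciding with the dimension of the smooth irreducible quadric $Q$; hence the orbit is open in $Q$ and, by irreducibility of $Q$, equals $Q$.
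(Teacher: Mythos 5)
Your first stage coincides with the paper's: reduce to $x_1=1$ by lifting to $\Spin(\calC,n)$ an element of $\SOrt(\calC,n)$ carrying $x_1$ to $1$. The gap is in your second stage, specifically in the final inference ``the orbit is open in $Q$ and, by irreducibility of $Q$, equals $Q$.'' An open (hence dense) orbit in an irreducible variety need not be the whole variety: $\GLs(V)$ acting on $V$ has the open orbit $V\setminus\{0\}$, and $\mathbf{G}_m$ acting on $\mathbb{A}^1$ likewise. Your dimension count only controls the orbit of the particular point $1$ (whose stabilizer you identify with $G_2$); it says nothing about the other orbits, which could a priori form a nonempty closed $\Spin(\calC_0,n)$-stable subset of $Q$ of smaller dimension. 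To close the argument you would need to show that \emph{every} orbit on $Q$ is open (connectedness then gives a single orbit), or that the complement of the open orbit is empty for some independent reason. There is also a secondary worry in positive characteristic (the paper allows $\chr\FF=3$): the identity $\dim O=\dim G-\dim G_x$ requires separability of the orbit map, and the identification of the stabilizer with a $14$-dimensional group of type $G_2$ is itself a nontrivial fact you are importing without proof.

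The paper sidesteps all of this by exhibiting the required element of $\Spin(\calC_0,n)$ explicitly: choose $a\in\calC_0$ with $n(a)=1$ and $n(a,x_2)=0$; then $x_2a\in\calC_0$ has norm $1$, so $c=(x_2a)\cdot a\in\Spin(\calC_0,n)$, and by \eqref{eq:spin7} one computes $\rho^+_c(x_2)=-\bigl((x_2a)(x_2a)\bigr)=n(x_2a)1=1$, while $\chi_c(1)=1$. This two-reflection construction is elementary, works uniformly in every characteristic $\ne 2$, and in effect proves (constructively) the transitivity statement you were trying to establish abstractly. If you want to keep your structure, replace the open-orbit argument with this explicit element.
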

\begin{proof}
First take an element $f_1\in\SOrt(\calC,n)$ which takes $x_1$ to $1$, and extend it as before to find a related triple $(f_1,f_2,f_3)$. The associated automorphism in $\Stab_{\Aut\calA}(E_1,E_2,E_3)$ takes $\iota_1(x_1)$ to $\iota_1(1)$ and $\iota_2(x_2)$ to some $\iota_2(y_2)$ with $n(y_2)=1$. Thus we may assume $x_1=1$.

Assuming $x_1=1$, take an element $a\in\calC_0$ with $n(a)=1$, $n(a,x_2)=0$. Then $n(x_2a,1)=n(x_2,\bar a)=-n(x_2,a)=0$, so $x_2a\in\calC_0$, and $n(x_2a)=n(x_2)n(a)=1$. Consider the element $c=(x_2a)\cdot a\in\Spin(\calC_0,n)$. Then $(\chi_c,\rho^+_c,\rho^-_c)$ is a related triple inducing an automorphism $\varphi$ in $\Stab_{\Aut\calA}(E_1,E_2,E_3)$ with $\varphi(\iota_1(1))=\iota_1(\chi_c(1))=\iota_1(1)$ and $\varphi(\iota_2(x_2))=\iota_2(\rho^+_c(x_2))=-\iota_2((x_2a)(x_2a))=\iota_2(1)$, as required.
\end{proof}

Therefore, in this situation we may assume that $\iota_1(1)$ and $\iota_2(1)$ are homogeneous elements. Let $a=\degree\iota_1(1)$ and $b=\degree\iota_2(1)$. Since $\iota_i(1)^2=4(E_{i+1}+E_{i+2})$ is an idempotent, we get $a^2=b^2=e$.

For $x,y\in\calC$, $\iota_3(xy)=\iota_1(\bar x)\iota_2(\bar y)=\bigl(\iota_2(1)\iota_3(x)\bigr)\bigl(\iota_3(y)\iota_1(1)\bigr)$, so if we define $\calC_g=\{x\in\calC: \iota_3(x)\in\calA_{abg}\}$ we get that for $x\in\calC_g$ and $y\in\calC_h$, $\iota_3(xy)\in (\calA_b\calA_{abg})(\calA_{abh}\calA_a)\subset \calA_{abgh}$, so $\calC_g\calC_h\subset \calC_{gh}$ and $\calC=\oplus_{g\in G}\calC_g$ is a $G$-grading on $\calC$.

Hence either there is a good basis of $\calC$ consisting of homogeneous elements, but then $\iota_3(\calC)$ has a basis consisting of homogeneous elements forming four orthogonal hyperbolic pairs, and this case has already been treated, or this grading in $\calC$ is equivalent to the $\bZ_2^3$-grading on $\calC$, and Theorem \ref{th:Z25Grading} shows that our grading $\Gamma$ is induced by the $\bZ_2^5$-grading of $\calA$.

\smallskip

In fact, we obtain more than what we need for the proof of Theorem \ref{th:Main}:

\begin{proposition}\label{pr:degree3strong}
Let $\Gamma: \calA=\bigoplus_{g\in G}\calA_g$ be a grading of the Albert algebra with $E_1,E_2,E_3\in\calA_e$. If there exists $i=1,2,3$ and an element $x\in\calC$ with $n(x)=0$ and $\iota_i(x)$ homogeneous, then $\Gamma$ is induced from the Cartan grading. Otherwise $\Gamma$ is induced from the $\bZ_2^5$-grading and all homogeneous components in each $\iota_j(\calC)$, $j=1,2,3$, are one-dimensional and orthogonal relative to the trace form.

Moreover, in the latter case, up to equivalence there are three different gradings whose universal grading groups and types are $\bZ_2^5$ and $(24,0,1)$, $\bZ_2^4$ and $(7,8,0,1)$, and $\bZ_2^3$ and $(0,0,7,0,0,1)$. The homogeneous component of highest dimension is $\cA_e$ in all cases.
\end{proposition}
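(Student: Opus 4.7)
The plan is to split into two cases based on whether any Peirce component $\iota_i(\calC)$ admits a homogeneous element $\iota_i(x)$ with $x\ne 0$ and $n(x)=0$, and then to reduce each case to one of the two fine gradings $\Gamma_\cA^1$ or $\Gamma_\cA^2$ constructed earlier. The crucial setup observation is: since $E_1,E_2,E_3\in\cA_e$, each Peirce component $\iota_j(\calC)$ is a graded subspace of $\cA$, and combining $T(\iota_j(x)\iota_j(y))=4n(x,y)$ with Theorem~\ref{th:trace}, the graded components of $\iota_j(\calC)$ are mutually $n$-orthogonal, with $n$ pairing $\iota_j(\calC)_g$ nondegenerately with $\iota_j(\calC)_{g^{-1}}$. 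Consequently $n|_{\iota_j(\calC)_g}=0$ whenever $g^2\ne e$, while $n|_{\iota_j(\calC)_g}$ is nondegenerate whenever $g^2=e$; since $\bF$ is algebraically closed, any nondegenerate quadratic space of dimension at least two admits nonzero isotropic vectors.

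Suppose first that some $\iota_i(x)$ is homogeneous with $n(x)=0$ and $x\ne 0$. The aim is to construct a ``good basis'' of $\iota_i(\calC)$ consisting of four orthogonal hyperbolic pairs of homogeneous elements. For the first pair, using the nondegenerate pairing between $\iota_i(\calC)_g$ and $\iota_i(\calC)_{g^{-1}}$, I pick a homogeneous $\iota_i(y)$ with $n(x,y)=1$; if $g^2\ne e$ then $n(y)=0$ automatically, while if $g^2=e$ the replacement $y\mapsto y-n(y)x$ (which stays in $\iota_i(\calC)_g$) forces $n(y)=0$. The $n$-orthogonal complement of $\bF x\oplus\bF y$ in $\calC$ is a $6$-dimensional graded quadratic subspace inheriting the setup dichotomy, and an inductive iteration (exploiting algebraic closedness together with the orthogonality/isotropy dichotomy to produce further homogeneous isotropic elements) yields the remaining hyperbolic pairs. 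The argument in \S\ref{ss:Degree3} then applies verbatim: a related triple (Theorem~\ref{th:SpinRelated}) combined with the isomorphism $\Stab_{\Aut\cA}(E_1,E_2,E_3)\cong\Spin(\calC,n)$ from Corollary~\ref{co:StabEis} provides an automorphism carrying this basis to the standard good basis $\{e_1,e_2,u_j,v_j\}$ of $\calC$; the generators in \eqref{eq:generatorsCartan} thereby become homogeneous, and Theorem~\ref{th:CartanGrading} identifies $\Gamma$ as induced from $\Gamma_\cA^1$.

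Otherwise every nonzero homogeneous element of each $\iota_j(\calC)$ is anisotropic. The setup dichotomy forces every nonzero component to satisfy $g^2=e$, and any such component of dimension at least two would, by algebraic closedness and nondegeneracy, contain nonzero isotropic vectors---contradicting the hypothesis. Hence every nonzero homogeneous component of each $\iota_j(\calC)$ is one-dimensional, and the orthogonality claim of the proposition follows. By Lemma~\ref{le:iota12(1)} an automorphism in $\Stab_{\Aut\cA}(E_1,E_2,E_3)$ makes $\iota_1(1)$ and $\iota_2(1)$ homogeneous of degrees $a,b$ satisfying $a^2=b^2=e$, and the $G$-grading on $\calC$ defined by $\calC_g=\{x:\iota_3(x)\in\cA_{abg}\}$ inherits anisotropy and one-dimensionality of every nonzero component. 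By Theorem~\ref{th:grOctonions} this grading on $\calC$ must arise from the Cayley--Dickson doubling process, since every coarsening of the Cartan grading on $\calC$ possesses an isotropic homogeneous element; Theorem~\ref{th:Z25Grading} then identifies $\Gamma$ as induced from $\Gamma_\cA^2$.

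The three listed equivalence classes of this second case are parameterized by how the degrees $\{a,b\}$ interact with the Cayley--Dickson generators $\deg w_1,\deg w_2,\deg w_3$ of the $\calC$-grading: in the generic case (no nontrivial relation) one obtains universal group $\bZ_2^5$, type $(24,0,1)$, and $\cA_e=\bF E_1\oplus\bF E_2\oplus\bF E_3$ of dimension~$3$; a single identification such as $a=\deg w_1$ (up to the Weyl group of $\Gamma_\cA^2$) places $\iota_1(w_1)\in\cA_e$ and gives universal group $\bZ_2^4$, type $(7,8,0,1)$, and $\dim\cA_e=4$; two independent such identifications place two further elements in $\cA_e$ (and force a third by multiplicative closure), giving universal group $\bZ_2^3$, type $(0,0,7,0,0,1)$, and $\dim\cA_e=6$. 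A direct orbit-count of the corresponding quotients of $\bZ_2^5$ confirms the stated types, and in each instance $\cA_e$ is the largest homogeneous component, so distinct types give inequivalent gradings. The main technical difficulty lies in the first case: bootstrapping a single isotropic homogeneous element into a full system of four orthogonal hyperbolic pairs requires a careful iteration exploiting both the graded $n$-orthogonality and algebraic closedness of $\bF$, and may require subsidiary use of automorphisms in $\Stab_{\Aut\cA}(E_1,E_2,E_3)$ to arrange favorable component structure at intermediate steps.
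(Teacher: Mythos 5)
Your setup, your treatment of the second (anisotropic) case, and your enumeration of the three coarsenings of the $\bZ_2^5$-grading all match the paper's argument. But the first case contains a genuine gap, and it is exactly the point you flag as ``the main technical difficulty'': the inductive production of four orthogonal hyperbolic pairs of homogeneous elements inside $\iota_i(\calC)$ does not go through as described. After you extract the first homogeneous hyperbolic pair $\iota_i(x),\iota_i(y)$, the graded orthogonal complement $W$ of $\bF x\oplus\bF y$ is a $6$-dimensional nondegenerate graded quadratic space, but nothing in your dichotomy prevents \emph{every} nonzero homogeneous component of $W$ from being one-dimensional and anisotropic (sitting at elements of order $2$). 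In that situation there is no further homogeneous isotropic vector to be found in $W$, the iteration stalls, and the appeal to ``subsidiary automorphisms to arrange favorable component structure'' is not an argument. Ruling this mixed configuration out requires input beyond the quadratic-space structure of a single Peirce component.

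The paper avoids the induction entirely by using the multiplication of $\cA$ to change Peirce components. From the single homogeneous hyperbolic pair it observes that $n(x+y)=1$, so $\calC=(\bar x+\bar y)\calC=\bar x\calC\oplus\bar y\calC$ with both summands totally isotropic (hence $4$-dimensional), and therefore
\[
\iota_{i+2}(\calC)=\iota_i(x)\iota_{i+1}(\calC)\oplus\iota_i(y)\iota_{i+1}(\calC)
\]
is a direct sum of two totally isotropic \emph{graded} subspaces of $\iota_{i+2}(\calC)$. A nondegenerate graded quadratic space split in this way automatically admits a homogeneous basis consisting of four orthogonal hyperbolic pairs (choose a homogeneous basis of one summand and homogeneous dual vectors in the other, using Theorem~\ref{th:trace} to keep the pairing block-diagonal), and then the $\Spin(\calC,n)$ argument from \S\ref{ss:Degree3} applies. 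If you want to salvage your write-up, replace the inductive step by this transfer to $\iota_{i+2}(\calC)$; everything else you wrote can stand.
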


\begin{proof}
If $\iota_i(x)$ is a nonzero homogeneous element with $n(x)=0$, then since the trace form is nondegenerate and $T(\iota_j(a)\iota_j(b))=4n(a,b)$ for any $j=1,2,3$ and $a,b\in\calC$, there is another homogeneous element $\iota_i(y)$ with $n(y)=0$ and $n(x,y)=1$. Then $n(x+y)=1$ so $\calC=(\bar x+\bar y)\calC=\bar x\calC + \bar y\calC$. As $\bar x\calC$ and $\bar y\calC$ are isotropic spaces, its dimension is at most $4$. We get $\calC=\bar x\calC\oplus\bar y\calC$, so $\iota_{i+2}(\calC)=\iota_{i+2}(\bar x\calC)\oplus\iota_{i+2}(\bar y\calC)=\iota_i(x)\iota_{i+1}(\calC)\oplus\iota_i(y)\iota_{i+1}(\calC)$ is the direct sum of two isotropic graded subspaces (for the trace form). Therefore, $\iota_{i+2}(\calC)$ has a basis consisting of homogeneous elements forming four orthogonal hyperbolic pairs, and hence $\Gamma$ is induced from the Cartan grading.

Otherwise all the homogeneous components in each graded subspace $\iota_j(\calC)$ are one dimensional and not isotropic, and hence orthogonal relative to the trace form, because of Theorem \ref{th:trace}. The arguments preceding this proposition show that we may assume $\degree\iota_1(1)=a$, $\degree\iota_2(1)=b$ and $\degree\iota_3(w_j)=abc_j$, $j=1,2,3$, with all the elements $a,b,c_1,c_2,c_3$ having order $2$, and that $\cC$ is graded with $\degree w_j=c_j$, $j=1,2,3$, so the subgroup $H$ generated by $c_1,c_2,c_3$ is isomorphic to $\bZ_2^3$. If $a,b\in H$, Lemma \ref{le:iota12(1)} allows us to assume $a=b=e$ and we get $\Supp\Gamma=H\cong\bZ_2^3$. If only one of $a$, $b$ or $ab$ are in $H$, by symmetry we may assume $a\in H$, and again we may assume $a=e$, thus getting $\Supp\Gamma=\langle b,H\rangle\cong\bZ_2^4$. Otherwise $\Supp\Gamma\cong\ZZ_2^5$, and $\Gamma$ is equivalent to the fine $\bZ_2^5$-grading. The types are easily computed.
\end{proof}

\subsection{Degree $2$}\label{ss:Degree2}

If the degree of $\calA_e$ is $2$, Corollary \ref{co:trace} shows that $\calA_e=\bF E\oplus\bF (1-E)$ for an idempotent $E$ with $T(E)=1$ (and hence $T(1-E)=2$). We may assume that $E=E_1$, so that $\wt{E}=1-E=E_2+E_3$. The grading on $\calA$ restricts to a grading on $\{X\in \calA\;|\; EX=0\}=\bF E_2\oplus\bF E_3\oplus\iota_1(\calC)=\bF \wt{E}\oplus \calV$, where $\calV=\bF (E_2-E_3)\oplus\iota_1(\calC)$, which is the Jordan algebra of a quadratic form with unity $\wt{E}$, because $(E_2-E_3)^2=E_2+E_3=\wt{E}$, $(E_2-E_3)\iota_1(\calC)=0$ and $\iota_1(x)\iota_1(y)=2n(x,y)\wt{E}=\frac{1}{2}T(\iota_1(x)\iota_1(y))\wt{E}$. Hence $XY=\frac{1}{2}T(XY)\wt{E}$ for any $X,Y\in\calV$. But the gradings on the Jordan algebras of quadratic forms are quite easy to describe: the unity is always in the identity component, and the restriction of the grading to the vector space $\calV$ is just a decomposition into subspaces: $\calV=\oplus_{g\in G}\calV_g$, with $T(\calV_g\calV_h)=0$ unless $gh=e$. Then either:
\begin{enumerate}
\item[1)] For any $g\in\Supp(\calV)$, $g^2=e$ and $\dim\calV_g=1$, or
\item[2)] There are homogeneous elements $X,Y\in\calV$ with $T(X^2)=T(Y^2)=0$ and $T(X,Y)=1$.
\end{enumerate}

Let us prove that the first case is not possible. Assume that for any $g\in\Supp(\calV)$, $g^2=e$ and $\dim\calV_g=1$. Let $H$ be the subgroup of $G$ generated by $\Supp(\calV)$, which is $2$-elementary: $H\cong \bZ_2^r$, with $r\geq 4$ as $\dim \calV=9$. Since $\{e\}\cup\Supp(\calV)$ has $10$ elements, it is not a subgroup of $H$, and hence there are elements $g\ne h\in\Supp(\calV)$ such that $gh\not\in\Supp(\calV)$. Then $\calV_g=\bF X$ for some $X$ with $X^2=\tilde E$. Hence $\wt{E}_2=\frac{1}{2}(\wt{E}+X)$ and $\wt{E}_3=\frac{1}{2}(\wt{E}-X)$ are nonzero orthogonal idempotents whose sum is $\wt{E}=1-E_1$. Thus $E_1$, $\wt{E}_2$ and $\wt{E}_3$ are orthogonal primitive idempotents and we may assume that $E_2=\frac{1}{2}(\wt{E}+X)$ and $E_3=\frac{1}{2}(\wt{E}-X)$, so that $X=E_2-E_3$. Then we have $\calV=\bF(E_2-E_3)\oplus\iota_1(\calC)$ and $g\not\in\Supp(\iota_1(\calC))$.

Let $\wb{G}=G/\langle g\rangle$ and consider the induced $\wb{G}$-grading on $\calA$, denoting by $\bar a$ the class of $a\in G$ modulo $\langle g\rangle$. Then $E_1,E_2,E_3\in\calA_{\bar e}$, so that each $\iota_i(\calC)$ are graded subspaces.

Besides, $\iota_1(\calC)$ is already a graded subspace of the original $G$-grading whose homogeneous components are all one-dimensional and non isotropic (relative to the norm of $\calC$). Moreover, since $\iota_1(\calC)_{gh}=\calV_{gh}=0$, $\iota_1(\calC)_{\bar h}=\iota_1(\calC)_{h}\oplus\iota_1(\calC)_{gh}=\iota_1(\calC)_h$ is one-dimensional and not isotropic. Proposition \ref{pr:degree3strong} gives that each homogeneous component of the $\wb{G}$-grading on each $\iota_i(\calC)$ is one-dimensional and not isotropic.

Take $a\in G$ such that $\iota_2(\calC)_{\bar a}\ne 0$, so that there is an element $x\in\calC$ with $n(x)\ne 0$ such that $\iota_2(\calC)_{\bar a}=\bF \iota_2(x)$. Then:
\[
\bigl(\iota_2(\calC)\oplus\iota_3(\calC)\bigr)_{\bar a}=
\iota_2(\calC)_{\bar a}\oplus\iota_3(\calC)_{\bar a}.
\]
If $\iota_3(\calC)_{\bar a}=0$, then $\iota_2(x)$ is homogeneous for the $G$-grading, and so is $\iota_2(x)^2=4n(x)(E_1+E_3)$, a contradiction with $\calA_e=\bF E_1\oplus\bF (E_2+E_3)$. Hence we have $\iota_3(\calC)_{\bar a}\ne 0$.

We conclude that the supports, for the $\wb{G}$-grading, of both $\iota_2(\calC)$ and $\iota_3(\calC)$ coincide. But since $n(x)\ne 0$, we have  $\iota_3(\calC)=\iota_1(\calC)\iota_2(\calC)_{\bar a}$. Since $\iota_3(\calC)_{\bar a}\ne 0$, it follows that $\iota_1(\calC)_{\bar e}\ne 0$, which means $\iota_1(\calC)_g\ne 0$, a contradiction with $g\not\in\Supp(\iota_1(\calC))$.

\smallskip

We are left with the second case, i.e., there are homogeneous elements $X\in\calV_g$, $Y\in\calV_{g^{-1}}$ with $T(X^2)=T(Y^2)=0$ and $T(XY)=1$, and $g\ne e$ because $\calA_e=\bF E\oplus\bF \wt{E}$. Then $(X+Y)^2=T(XY)\wt{E}=\wt{E}$ and hence $\frac12(\wt{E}-X-Y)$ and $\frac12(\wt{E}+X+Y)$ are nonzero idempotents with sum $\wt{E}$, so we may assume $X+Y=E_3-E_2$. Then $X-Y$ is an element of $\{Z\in\calA\;|\; E_1Z=0=(E_2-E_3)Z\}=\iota_1(\calC)$, and $T((X-Y)^2)=-2$. By Lemma \ref{le:iota12(1)}, we may assume $X-Y=\frac{\bi}{2}\iota_1(1)$. In other words, we may assume that the elements $S^+=X=(E_3-E_2)+\frac{\bi}{2}\iota_1(1)$ and $S^-=Y=(E_3-E_2)-\frac{\bi}{2}\iota_1(1)$ are homogeneous, say $S^+\in\calA_g$ and $S^-\in\calA_{g^{-1}}$ (because $S^+S^-=2\wt{E}\in\calA_e$).

Consider the $\bZ$-grading of $\calA$ in \eqref{eq:ZGrading}. The subspaces $\calA_{\pm 1}=\{Z\in\calA\;|\; EZ=\frac{1}{2}Z,\ S^{\pm}Z=0\}$ are then graded subspaces as well as $\calA_0=\bF E\oplus\bF \wt{E}\oplus \nu(\calC_0)$, since $\nu(\calC_0)=\{Z\in \calA\;|\; EZ=0=S^{\pm}Z\}$.

Assume now that there is an element $0\ne x\in\calC$ with $n(x)=0$ such that $\nu_+(x)$ is homogeneous: $\nu_+(x)\in(\calA_1)_{h_1}$. The nondegeneracy of the trace form shows that there is an homogeneous element $\nu_-(y)\in(\calA_{-1})_{h_1^{-1}}$ with $T(\nu_+(x)\nu_-(y))=8n(x,y)\ne 0$.  Then $\nu_+(x)\nu_-(y)=2n(x,y)(2E+\wt{E})+\nu(\bar xy-\bar yx)\in\calA_e=\bF E\oplus\bF\wt{E}$. Hence $\bar xy=\bar yx$. But then $n(x,y)1=\bar xy+\bar yx=2\bar xy$, a contradiction, since $n(\bar xy)=n(x)n(y)=0$ while $n(x,y)\ne 0$ and $n(1)=1\ne 0$.

Therefore, all the homogeneous components in $\calA_1$ are one-dimensional and not isotropic (relative to the norm of $\calC$ once we identify $\calA_1=\nu_+(\calC)$ with $\calC$). Fix an homogeneous element $\nu_+(x)\in(\calA_1)_a$, with $n(x)=1$. Then $\nu_+(x)^2=4n(x)S^+$, so $a^2=g$. The proof of Lemma \ref{le:iota12(1)} shows that there is an element $c\in\Spin(\calC_0,n)$ such that $\rho^+_c(x)=1$, so Remark \ref{re:StabEiiota1} allows us to assume that $x=1$. Thus we have $\nu_+(1)\in(\calA_1)_a$, $a^2=g$, and hence $\nu_-(1)=S^-\nu_+(1)\in(\calA_{-1})_{a^{-1}}$. In this situation, for any $x,y\in\calC$ such that $\nu_+(x)\in(\calA_1)_{h_1}$, $\nu_+(y)\in(\calA_1)_{h_2}$, we have:
\[
\begin{split}
(\nu_+(x)\nu_-(1))\nu_+(y)
    &=\Bigl(2n(x,1)(2E+\tilde E)+\nu(\bar x-x)\Bigr)\nu_+(y)\\
    &=3n(x,1)\nu_+(y)+\nu_+(y(\bar x-x))\\
    &=4n(x,1)\nu_+(y)-\nu_+(yx),\quad\text{as}\quad x+\bar x=n(x,1)1.
\end{split}
\]
If $n(x,1)\ne 0$, then $0\ne \nu_+(x)\nu_+(1)\in\bF S^+$, so that $h_1a=g=a^2$, so $h_1=a$ and $(\nu_+(x)\nu_-(1))\nu_+(y)\in(\calA_1)_{aa^{-1}h_2}=(\calA_1)_{h_2}$, and $\nu_+(yx)\in (\calA)_{a^{-1}h_1h_2}$. On the other hand, if $n(x,1)=0$, then $\nu_+(yx)=-(\nu_+(x)\nu_-(1))\nu_+(y)\in(\calA_1)_{a^{-1}h_1h_2}$ too.

Thus, consider the subspaces $\calC_h=\{x\in\calC\;|\; \nu_+(x)\in (\calA_1)_{ah}\}$ for $h\in G$. Then $\calC_{h_1}\calC_{h_2}\subset \calC_{h_1h_2}$ and we get a grading of $\calC$ in which all the homogeneous components are one-dimensional. Hence this is isomorphic to the $\bZ_2^3$-grading of $\calC$. Since $1\in\calC_e$, we have $\nu_+(1)\in\calA_a$, $\nu_-(1)\in\calA_{a^{-1}}$, and $\nu(w_j)=\nu_+(w_j)\nu_-(1)$ are homogeneous too, for $w_1$, $w_2$ and $w_3$ as in Theorem \ref{th:ZZ23Grading}. This Theorem shows that $\Gamma$ is induced from the $\bZ\times\bZ_2^3$-grading.

In fact, we can say more. Let $a=\degree\nu_+(1)$ and $b_j=\degree\nu(w_j)$, $j=1,2,3$. Then the subgroup $H=\langle b_1,b_2,b_3\rangle$ is isomorphic to $\bZ_2^3$ and $a^2=g\ne e$ as $\dim\cA_e=2$. Then $\Supp\Gamma=\langle a,H\rangle$, and the homogeneous components of the $5$-grading in \eqref{eq:ZGrading} have supports $\Supp\cA_{\pm 2}=\{a^{\pm 2}\}$, $\Supp\cA_{\pm 1}=a^{\pm 1}H$, $\Supp\cA_0=H$. If this subsets are disjoint, $\Gamma$ is equivalent to the $\bZ\times\bZ_2^3$-grading. Otherwise we have one of the following possibilities:
\begin{itemize}
\item $a^4=e$ but $a^2\not\in H$, thus getting a $\bZ_4\times\bZ_2^3$-grading of type $(23,2)$.
\item $a^2\in a^{-1}H$. In this case $a^3=b\in H$, and hence $(ab)^3=1$ and $(ab)^2=a^2$. As before we may change $a$ by $ab$ and hence assume $a^3=e$. We get a $\bZ_3\times\bZ_2^3$-grading of type $(21,3)$.
\item $a^2\in H$ (recall $a^2\ne e$). Since all the homogeneous components of the $\bZ_2^3$-grading of $\cC$, with the exception of the neutral component, play the same role we may assume $a^2=b_1$ and we obtain a unique, up to equivalence, grading by $\bZ_4\times \bZ_2^2$ of type $(6,9,1)$.
\end{itemize}

We summarize our arguments:

\begin{proposition}\label{pr:degree2strong}
Let $\Gamma: \cA=\oplus_{g\in G}\cA_g$ be a grading of the Albert algebra with $\dim\cA_e=2$. Then $\Gamma$ is induced from the $\bZ\times\bZ_2^3$-grading. Moreover, up to equivalence there are four such different gradings whose universal grading groups and types are $\bZ\times\bZ_2^3$ and $(25,1)$,
$\bZ_4\times\bZ_2^3$ and $(23,2)$, $\bZ_3\times \bZ_2^3$ and $(21,3)$, and $\bZ_4\times\bZ_2^2$ and $(6,9,1)$.\qed
\end{proposition}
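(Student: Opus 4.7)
The plan is to exploit the structural consequences of $\dim\cA_e=2$. By Corollary \ref{co:trace} we have $\cA_e=\FF E\oplus \FF(1-E)$ for an idempotent $E$ with $T(E)=1$, and after an automorphism I may take $E=E_1$ and $\wt E=E_2+E_3$. The complement $\cV=\FF(E_2-E_3)\oplus\iota_1(\cC)$ of $\FF\wt E$ in the zero-eigenspace of $L_E$ is a graded subspace that is a Jordan algebra of the quadratic form $\tfrac12 T|_{\cV}$ with unit $\wt E$. Gradings on such an algebra fall into exactly two patterns: (1) every nontrivial component is one-dimensional with square zero and degree of order two, or (2) there is a hyperbolic pair of homogeneous elements in degrees $g,g^{-1}$ with $g\ne e$.

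My first task is to rule out (1). There the support generates a $2$-elementary group $H\cong\bZ_2^r$ with $r\ge 4$ since $\dim\cV=9$, so a counting argument produces $g,h\in\Supp\cV$ with $gh\notin\Supp\cV$. The generator of $\cV_g$ squares to $\wt E$, giving two new orthogonal primitive idempotents summing to $\wt E$; taking these as $E_2,E_3$ we arrange $E_2-E_3\in\cV_g$ and, crucially, $\iota_1(\cC)_g=0$. Passing to the quotient $\wb G=G/\langle g\rangle$ places $E_1,E_2,E_3$ in $\cA_{\bar e}$, so Proposition \ref{pr:degree3strong} applies to the induced $\wb G$-grading and forces every component of each $\iota_i(\cC)$ to be one-dimensional and anisotropic. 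Comparing supports via $\iota_3(\cC)=\iota_1(\cC)\iota_2(\cC)_{\bar a}$ for a suitable $a$ then forces $\iota_1(\cC)_g\ne 0$, contradicting the construction.

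In case (2) I normalize: $X+Y$ is a difference of primitive idempotents summing to $\wt E$, so up to automorphism $X+Y=E_3-E_2$, and via Lemma \ref{le:iota12(1)} combined with Remark \ref{re:StabEiiota1} I may further arrange $X-Y=\tfrac{\bi}{2}\iota_1(1)$, i.e., $X=S^+$ and $Y=S^-$. The five-term $\bZ$-grading \eqref{eq:ZGrading} is then refined by $G$-graded pieces. The key step is to show that every nonzero homogeneous element of $\cA_1=\nu_+(\cC)$ is anisotropic: otherwise a hyperbolic pair $\nu_+(x),\nu_-(y)$ with $n(x,y)=1$ produces $\nu_+(x)\nu_-(y)\in\cA_e$, and the identity $\nu_+(x)\nu_-(y)=2n(x,y)(2E+\wt E)+\nu(\bar xy-\bar yx)$ forces $\bar xy=\bar yx$, which combined with $n(\bar xy)=0$ yields $n(x,y)=0$, a contradiction. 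Picking an anisotropic homogeneous $\nu_+(x_0)\in(\cA_1)_a$ of unit norm, the construction inside the proof of Lemma \ref{le:iota12(1)} supplies an automorphism in $\Stab_{\Aut\cA}(E_1,E_2,E_3,\iota_1(1))$, which by Remark \ref{re:StabEiiota1} preserves $S^{\pm}$, so I may assume $x_0=1$ and therefore $\nu_\pm(1)\in\cA_{a^{\pm 1}}$ with $a^2=g$. The multiplication identities transfer the $G$-grading on $\cA_1$ to a group grading of $\cC$ whose components are one-dimensional; by Theorem \ref{th:grOctonions} it must be the $\bZ_2^3$-grading, so $\nu(w_j)=\nu_+(w_j)\nu_-(1)$ are homogeneous, and Theorem \ref{th:ZZ23Grading} concludes that $\Gamma$ is induced from $\Gamma_\cA^3$.

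For the enumeration it remains to set $H=\langle b_1,b_2,b_3\rangle\cong\bZ_2^3$ with $b_j=\deg\nu(w_j)$ and analyze the support, which fills the five $\bZ$-layers as $\{a^{\pm 2}\}$, $a^{\pm 1}H$ and $H$ inside the universal group $\langle a\rangle\cdot H$. Four inequivalent possibilities arise: disjoint layers give $\bZ\times\bZ_2^3$ and type $(25,1)$; $a^4=e$ with $a^2\notin H$ gives $\bZ_4\times\bZ_2^3$ and type $(23,2)$; $a^3\in H$, which after replacing $a$ by a suitable representative may be taken as $a^3=e$, gives $\bZ_3\times\bZ_2^3$ and type $(21,3)$; finally $a^2\in H\setminus\{e\}$, which by the transitivity of $\Aut(\bZ_2^3)=\W(\Gamma_\cC^2)$ on nonidentity elements may be normalized to $a^2=b_1$, gives $\bZ_4\times\bZ_2^2$ and type $(6,9,1)$. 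The main obstacle is the elimination of case (1), where the quotient-grading trick must be combined precisely with Proposition \ref{pr:degree3strong}; once that is in place, the transfer to the $\bZ\times\bZ_2^3$-grading is essentially forced, and the classification reduces to a clean group-theoretic enumeration of the four overlap patterns.
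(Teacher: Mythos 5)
Your proposal is correct and follows essentially the same route as the paper's proof in \S\ref{ss:Degree2}: the same dichotomy for gradings on the quadratic-form Jordan algebra $\FF\wt{E}\oplus\cV$, the same quotient-by-$\langle g\rangle$ argument combined with Proposition \ref{pr:degree3strong} to eliminate the degenerate case, the same normalization to $S^{\pm}$ and transfer of the grading on $\cA_1=\nu_+(\cC)$ to the $\bZ_2^3$-grading on $\cC$, and the same four-way enumeration of how $\langle a\rangle$ can meet $H$. The only slip is the phrase ``one-dimensional with square zero'' in your statement of case (1) --- those components are anisotropic (their generators square to a nonzero multiple of $\wt{E}$), as your own subsequent sentence correctly uses.
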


\subsection{Degree $1$}\label{ss:Degree1}

Finally, consider the case of a grading $\Gamma: \calA=\bigoplus_{g\in G}\calA_g$ of the Albert algebra with $\dim\calA_e=1$, or $\calA_e=\bF 1$.

Let $g\in\Supp\Gamma$ be an element of order $2$. Let $\wb{G}=G/\langle g\rangle$ and consider the induced $\wb{G}$-grading. Then $\calA_{\bar e}=\calA_e\oplus\calA_g$ is a degree two Jordan algebra, so $\dim\calA_g=1$ by Corollary \ref{co:trace}, and $\calA_{\bar e}=\bF E\oplus\bF(1-E)$ for an idempotent $E$ with $T(E)=1$. But $\calA_g=\{X\in\calA_{\bar e}\;|\; X\not\in\bF 1,\ X^2\in\bF 1\}\cup \{0\}=\bF (1-2E)$, and $T(1-2E)=3-2=1$, while $T(\calA_g)=T(\calA_g\calA_e)=0$ by Theorem \ref{th:trace}, a contradiction. Therefore, for any element $g\in\Supp\Gamma$, we have $g=e$ or the order of $g$ is at least $3$.

Take now an element $g\in \Supp\Gamma$, $g\ne e$ (so its order is at least $3$), and take $X\in\calA_g$ and $Y\in\calA_{g^{-1}}$ with $T(XY)\ne 0$ Hence $0\ne XY\in \calA_e=\bF 1$ and we may take $XY=1$. This implies $T(1)\ne 0$, which shows that $\chr{\bF}\ne 3$.

The first linearization of equation \eqref{eq:generic3} gives
\[
X^2Y+2(XY)X-T(Y)X^2-2T(X)XY+S(X)Y+S(X,Y)X-N(X;Y)1=0
\]
($N(X;Y)$ being quadratic on $X$ and linear on $Y$). But $T(X)=T(X^2)=T(Y)=0$, so the component in $\calA_g$ of the above equation gives $X^2Y+2(XY)X+S(X,Y)X=0$, and $S(X,Y)=-T(XY)=-3$, so that $X^2Y+2X-3X=0$, or $X^2Y=X$. Then $X$ is invertible in the Jordan sense \cite[p.~51]{Jacobson} with inverse $Y$.  Since $T(X)=0=S(X)$, we have $X^3-N(X)1=0$, so $0\ne X^3\in\calA_e$, which forces $g^3=e$. Therefore, any element of $\Supp\Gamma$ different from $e$ has order $3$. Since we may assume that $G$ is generated by $\Supp\Gamma$, we conclude that $G$ is an elementary $3$-group.

Moreover, with $X$ as above, the quadratic operator $U_X$ is invertible and takes any $\calA_h$ to $\calA_{g^2h}$. In particular $\calA_e=U_X(\calA_g)$, which forces $\dim \calA_g=1$. Also, for any other $h\in\Supp\Gamma$, $U_X(\calA_h)=\calA_{g^2h}$, so we get that for any $g,h\in\Supp\Gamma$, $g^{-1}h\in\Supp\Gamma$. It follows that $\Supp\Gamma$ is a group, isomorphic to $\bZ_3^3$.

\smallskip

Since we have shown that $\chr{\FF}\ne 3$, the grading $\Gamma$ is given by three commuting order $3$ automorphisms $\varphi_1,\varphi_2,\varphi_3$ of $\calA$. Let $\calS$ be the subalgebra of elements fixed by $\varphi_1$ and $\varphi_2$. Then $\dim \calS=3$, $\calS=\calA_e\oplus\calA_g\oplus\calA_{g^2}$ for some $g\in \Supp\Gamma$. Take $X\in\calA_g$ with $X^3=1$. Thus $\calS$ is isomorphic to $\bF\times\bF\times \bF$, and we may assume that $\calS=\bF E_1\oplus\bF E_2\oplus\bF E_3$ with $\varphi_3(E_i)=E_{i+1}$ for any $i=1,2,3$.

For each $i$, the subspace $\iota_i(\calC)=\{X\in\calA\;|\; E_{i+1}X=\frac{1}{2}X=E_{i+2}X\}$ is invariant under $\varphi_1$ and $\varphi_2$, while $\varphi_3(\iota_i(\calC))=\iota_{i+1}(\calC)$.

For $x,y\in\calC$ define $x*y$ by $\iota_3(x*y)=\varphi_3(\iota_3(x))\varphi_3^2(\iota_3(y))$. Then:
\[
\begin{split}
\iota_3((x*y)*x)&=\varphi_3(\iota_3(x*y))\varphi_3^2(\iota_3(x))\\
    &=\bigl(\varphi_3^2(\iota_3(x))\iota_3(y)\bigr)\varphi_3^2(\iota_3(x))\\
    &=\varphi_3^2(\iota_3(x))\bigl(\varphi_3^2(\iota_3(x))\iota_3(y)\bigr).
\end{split}
\]
But $\varphi_3^2(\iota_3(x))=\iota_2(x')$ for some $x'\in\calC$ with $n(x)=n(x')$ (since $T(\iota_i(x)^2)=8n(x)$ and $T$ is invariant under $\varphi_3$), and
\[
\begin{split}
\iota_2(x')\bigl(\iota_2(x')\iota_3(y)\bigr)
    &= \iota_2(x')\iota_1(\bar x'\bar y)\\
    &=\iota_3(\wb{\bar x'\bar y}\bar x')\\
    &=\iota_3((yx')\bar x')=n(x')\iota_3(y)=n(x)\iota_3(y).
\end{split}
\]
Hence $(x*y)*x=n(x)y$ and, in the same vein, we get $x*(y*x)=n(x)y$. It follows that $(\calC,*)$ is a symmetric composition algebra (see \cite[Chapter VIII]{KMRT}), and $\varphi_1$ and $\varphi_2$ give, by restriction to $\iota_3(\calC)$, two commuting order $3$ automorphisms of $(\calC,*)$, and hence a grading of $(\calC,*)$ by $\bZ_3^2$. We obtain that $(\calC,*)$ is the Okubo algebra over $\bF$ and the grading is the unique, up to equivalence, $\bZ_3^2$-grading on $(\calC,*)$ \cite{ElduqueGrSym}.

Moreover, setting $\tilde\iota_i(x)=\varphi_3^i(\iota_3(x))$, we recover exactly the multiplication in $\calA$ in equations \eqref{eq:AlbertOkubo}. This shows that $\Gamma$ is equivalent to the $\bZ_3^3$-grading of $\calA$.

\smallskip

The proof of Theorem \ref{th:Main} is complete.

\subsection{Classification of $G$-gradings up to isomorphism}

Now we obtain, for any abelian group $G$, a classification of $G$-gradings on $\cA$ up to isomorphism.
We will need the following result describing the Weyl groups of the fine gradings $\Gamma_\cA^j$, $j=1,2,3,4$.

\begin{theorem}[\cite{EK_Weyl}]\label{th:Weyl_Albert}
Identifying $\supp\Gamma_\cA^1$ with the short roots of the root system $\Phi$ of type $F_4$, we have $\W(\Gamma_\cA^1)=\Aut\Phi$. $\W(\Gamma_\cA^2)$ is the stabilizer in $\Aut(\ZZ_2^2\times\ZZ_2^3)$ of the subgroup $\ZZ_2^3$ (as a set). $\W(\Gamma_\cA^3)=\Aut(\ZZ\times\ZZ_2^3)$. $\W(\Gamma_\cA^4)$ is the commutator subgroup of $\Aut(\ZZ_3^3)$.\qed
\end{theorem}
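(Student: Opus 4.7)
The plan is to treat the four fine gradings separately, using throughout the identification $\W(\Gamma)=N(\Qs)/C(\Qs)$, where $\Qs=\Diags(\Gamma)$ and the normalizer and centralizer are taken inside $\bAut(\cA)$. For each grading, the strategy is to establish an upper bound via structural invariants of the grading that every graded automorphism must preserve, and a lower bound by exhibiting sufficiently many explicit automorphisms.

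For $\Gamma_\cA^1$, the identity component $\bF E_1\oplus\bF E_2\oplus\bF E_3$ acts on $\Der(\cA)$, via the adjoint representation, as a Cartan subalgebra, so $\Qs$ is a maximal torus $T$ of $\bAut(\cA)$. Hence $\W(\Gamma_\cA^1)=N(T)/C(T)$ is the classical Weyl group $W(F_4)$, which acts faithfully on the $24$ short roots (the nonzero weights of $\cA$ viewed as an $F_4$-module) and realizes the full $\Aut\Phi$ under this action.

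For $\Gamma_\cA^2$ and $\Gamma_\cA^3$, the upper bounds come from structural invariants. For $\Gamma_\cA^2$, the three $8$-dimensional subspaces $\iota_i(\cC)$ are intrinsic to the grading: their supports are the three nontrivial cosets of the unique index-$4$ subgroup of $\ZZ_2^5$ whose intersection with the support is $\{e\}$, and this pins down the distinguished $\ZZ_2^3$. For $\Gamma_\cA^3$, the $\ZZ$-factor is distinguished by the Peirce decomposition relative to the idempotent $E$, and the $\ZZ_2^3$-factor by the Cayley--Dickson grading on $\cC$ appearing within $\nu(\cC_0)$. For the lower bounds, we combine the $S_3$-action on $E_1,E_2,E_3$ in the case of $\Gamma_\cA^2$ (respectively, the involution $S^+\leftrightarrow S^-$ for $\Gamma_\cA^3$), the $\Spin$-action of Corollary \ref{co:StabEis} (respectively, Corollary \ref{co:StabEisiota1} together with Remark \ref{re:StabEiiota1}) realizing $\Aut(\ZZ_2^3)=\GL_3(\bZ_2)$, and suitable additional elements from $\Spin(\cC,n)$ (respectively, inner automorphisms built from $\nu(\cC_0)$) to produce the remaining off-diagonal components of the stabilizer.

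The main obstacle is $\Gamma_\cA^4$. Here the full group $\Aut(\ZZ_3^3)=\GL_3(\bZ_3)$ is not realized, and the correct obstruction is cohomological: the structure constants $e_g\cdot e_h=c(g,h)e_{g+h}$ (for any choice of homogeneous basis $\{e_g\}_{g\in\ZZ_3^3}$) define a class in $H^2(\ZZ_3^3,\bF^\times)$ whose stabilizer in $\GL_3(\bZ_3)$ is precisely $\SL_3(\bZ_3)=[\GL_3(\bZ_3),\GL_3(\bZ_3)]$. For the reverse inclusion we combine the $S_3$-action cycling the subspaces $\tilde\iota_i(\cC)$ via $\varphi_3$ with the Weyl group of the $\ZZ_3^2$-grading on the Okubo algebra $(\cC,*)$. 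Carefully verifying both the exact form of the cohomological obstruction and the surjectivity of the explicit construction onto $\SL_3(\bZ_3)$ is the most delicate step of the argument.
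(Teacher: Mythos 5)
You should first note that the paper does not prove this statement at all: Theorem~\ref{th:Weyl_Albert} is imported verbatim from \cite{EK_Weyl} and closed with a \verb|\qed| immediately after the statement, so there is no internal proof to compare your argument against. What can be assessed is whether your outline is sound on its own terms. The overall strategy (identify $\W(\Gamma)$ with $N(\Qs)/C(\Qs)$ for $\Qs=\Diags(\Gamma)$, bound it above by invariants that every self-equivalence must preserve, and below by explicit automorphisms) is the right one, and the treatments of $\Gamma_\cA^1$, $\Gamma_\cA^2$ and $\Gamma_\cA^3$ are plausible sketches: for $\Gamma_\cA^1$ one does need the additional observation that the subgroup of $\GL_4(\ZZ)$ permuting the $24$ short roots is exactly $\Aut\Phi$ (the short roots form a $D_4$ system and $\Aut(D_4)=W(F_4)=\Aut\Phi$), and for $\Gamma_\cA^2$ your identification of $\ZZ_2^3$ as the unique index-$4$ subgroup meeting the support only in $e$ does give the correct upper bound.

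There is, however, a genuine flaw in your key claim for $\Gamma_\cA^4$. The structure constants $c(g,h)$ of the $\ZZ_3^3$-grading do \emph{not} define a class in $H^2(\ZZ_3^3,\FF^\times)$: the $2$-cocycle condition on $c$ is precisely associativity of the graded algebra, and $\cA$ is not associative. Even setting that aside, $H^2(G,\FF^\times)$ for $G$ abelian and $\FF$ algebraically closed is classified by the alternating bicharacter $\beta(g,h)=c(g,h)c(h,g)^{-1}$, and since the Albert algebra is \emph{commutative} this bicharacter is trivial; no class in $H^2(\ZZ_3^3,\FF^\times)$ has stabilizer $\SL_3(\ZZ_3)$ in $\GL_3(\ZZ_3)$ (the trivial class is fixed by everything, and a nontrivial alternating bicharacter on an odd-dimensional $\FF_3$-space has a nonzero radical, so its stabilizer is a parabolic-type subgroup, not $\SL_3$). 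The invariant that actually does the work is the one you mention in passing: the associator, i.e.\ the coboundary $dc$, which is the alternating trilinear form recorded by $(X_1X_2)X_3=\omega^{\pm 1}X_1(X_2X_3)$ in the paper's definition of $\Gamma^4_\cA(G,H,\delta)$. Since $\GL_3(\FF_3)$ acts on nonzero alternating $3$-forms through the determinant, preserving this form forces determinant $1$, which is the correct upper bound $\SL_3(\ZZ_3)=[\Aut(\ZZ_3^3),\Aut(\ZZ_3^3)]$. You should replace the $H^2$ language by this associator/determinant argument; the lower bound via $\varphi_3$ and the Weyl group of the $\ZZ_3^2$-grading on the Okubo algebra then remains to be carried out explicitly, as you acknowledge.
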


To state our classification theorem, we introduce the following notation:

$\bullet$ Let $\gamma=(b_1,b_2,b_3,b_4)$ be a quadruple of elements in $G$. Denote by $\Gamma^1_\cA(G,\gamma)$ the $G$-grading on $\cA$ induced from $\Gamma_\cA^1$ by the homomorphism $\ZZ^4\to G$ sending the $i$-th element of the standard basis of $\ZZ^4$ to $b_i$, $i=1,2,3,4$. For two such quadruples, $\gamma$ and $\gamma'$, we will write $\gamma\sim\gamma'$ if there exists $w\in\Aut\Phi$ such that $b'_j=b_1^{w_{1j}}b_2^{w_{2j}}b_3^{w_{3j}}b_4^{w_{4j}}$ where $w=(w_{ij})$ is considered as an element of $GL_4(\ZZ)$.

$\bullet$ Let $\gamma=(b_1,b_2,b_3)$ be a triple of elements in $G$ with $b_1b_2b_3=e$ and $b_i^2=e$, $i=1,2,3$. Let $H\subset G$ be a subgroup isomorphic to $\ZZ_2^3$. Fix an isomorphism $\alpha\colon\ZZ_2^3\to H$ and denote by $\Gamma^2_\cA(G,H,\gamma)$ the $G$-grading induced from $\Gamma^2_\cA$ by the homomorphism $\ZZ_2^2\times\ZZ_2^3\to G$ sending the $i$-th element of the standard basis of $\ZZ_2^2$ to $b_i$, $i=1,2$, and restricting to $\alpha$ on $\ZZ_2^3$. It follows from Theorem \ref{th:Weyl_Albert} that the isomorphism class of the induced grading does not depend on the choice of $\alpha$. For two such triples, $\gamma$ and $\gamma'$, we will write $\gamma\sim\gamma'$ if there exists $\pi\in\sg(3)$ such that $b'_i\equiv b_{\pi(i)}\pmod{H}$ for all $i=1,2,3$.

$\bullet$ Let $g$ be an element of $G$ such that $g^2\ne e$. Let $H\subset G$ be a subgroup isomorphic to $\ZZ_2^3$. Fix an isomorphism $\alpha\colon\ZZ_2^3\to H$ and denote by $\Gamma^3_\cA(G,H,g)$ the $G$-grading induced from $\Gamma^3_\cA$ by the homomorphism $\ZZ\times\ZZ_2^3\to G$ sending the element $1$ in $\ZZ$ to $g$ and restricting to $\alpha$ on $\ZZ_2^3$. It follows from Theorem \ref{th:Weyl_Albert} that the isomorphism class of the induced grading does not depend on the choice of $\alpha$. For two elements, $g$ and $g'$, we will write $g\sim g'$ if $g'\equiv g\pmod{H}$ or $g'\equiv g^{-1}\pmod{H}$.

$\bullet$ Let $H\subset G$ be a subgroup isomorphic to $\ZZ_3^3$. Then $\Gamma_\cA^4$ may be regarded as a $G$-grading with support $H$. Since $\W(\Gamma_\cA^4)$ has index $2$ in $\Aut(\ZZ_3^3)$, there are two isomorphism classes among the induced gradings ${}^\alpha\Gamma_\cA^4$ for various isomorphisms $\alpha\colon\ZZ_3^3\to H$. They can be distinguished as follows: fix a primitive third root of unity $\omega$ and a generating set $\{g_1,g_2,g_3\}$ for $H$, then in one isomorphism class we will have $(X_1 X_2)X_3=\omega X_1(X_2 X_3)$ and in the other $(X_1 X_2)X_3=\omega^{-1} X_1(X_2 X_3)$ where $X_i$ are nonzero elements with $\deg X_i=g_i$, $i=1,2,3$ --- see \cite[\S 4.5]{EK_Weyl}. We denote these two (isomorphism classes of) $G$-gradings by $\Gamma^4_\cA(G,H,\delta)$ where $\delta\in\{+,-\}$.

\begin{theorem}\label{th:Main_iso}
Let $\cA$ be the Albert algebra over an algebraically closed field of characteristic different from $2$. Let $G$ be an abelian group. Then any $G$-grading on $\cA$ is isomorphic to some $\Gamma^1_\cA(G,\gamma)$, $\Gamma^2_\cA(G,H,\gamma)$, $\Gamma^3_\cA(G,H,g)$ or $\Gamma_\cA^4(G,H,\delta)$ (characteristic $\ne 3$ in this latter case), but not two from this list. Also,
\begin{itemize}
\item $\Gamma^1_\cA(G,\gamma)$ is isomorphic to $\Gamma^1_\cA(G,\gamma')$ if and only if $\gamma\sim\gamma'$;

\item $\Gamma^2_\cA(G,H,\gamma)$ is isomorphic to $\Gamma^2_\cA(G,H',\gamma')$ if and only if $H=H'$ and $\gamma\sim\gamma'$;

\item $\Gamma^3_\cA(G,H,g)$ is isomorphic to $\Gamma^3_\cA(G,H',g')$ if and only if $H=H'$ and $g\sim g'$;

\item $\Gamma^4_\cA(G,H,\delta)$ is isomorphic to $\Gamma^4_\cA(G,H',\delta')$ if and only if $H=H'$ and $\delta=\delta'$.
\end{itemize}
\end{theorem}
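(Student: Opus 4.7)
The plan is a two-stage reduction. First, I would show that every $G$-grading $\Gamma$ on $\cA$ belongs to exactly one of the four families, i.e., is isomorphic to one of the forms $\Gamma^j_\cA(G,\ldots)$ for a unique $j\in\{1,2,3,4\}$. Existence is immediate from Theorem \ref{th:Main}: since $\Gamma$ is a coarsening of some fine grading (by finite-dimensionality, every chain of proper refinements terminates), it is induced from some $\Gamma^j_\cA$ by a homomorphism from the universal group, and the conventions in the definitions of $\Gamma^j_\cA(G,H,\ldots)$ (requiring the torsion part of the inducing homomorphism to be injective onto $H$) correctly capture this structure. Uniqueness of $j$ is obtained from the invariants developed in the proof of Theorem \ref{th:Main}: the degree of the semisimple Jordan algebra $\cA_e$ (Corollary \ref{co:trace}) equals $1$ for Family 4 (Subsection \ref{ss:Degree1}), $2$ for Family 3 (Subsection \ref{ss:Degree2}), and $3$ for Families 1 and 2 (Subsection \ref{ss:Degree3}); within degree $3$, Proposition \ref{pr:degree3strong} further distinguishes Family 1 (existence of an isotropic homogeneous element in some $\iota_i(\cC)$ after placing $E_1,E_2,E_3$ into $\cA_e$) from Family 2 (no such element).

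Second, within each family, I would establish the criterion for two inducing homomorphisms to give isomorphic gradings. This is the quasi-torus analogue of Proposition \ref{pr:isomorphism_Cartan}: two gradings ${}^\alpha\Gamma^j_\cA$ and ${}^{\alpha'}\Gamma^j_\cA$ are isomorphic if and only if $\alpha'=\alpha\circ w$ for some $w\in\W(\Gamma^j_\cA)$. The ``if'' direction is straightforward, as a self-equivalence of $\Gamma^j_\cA$ realizing $w$ intertwines the two induced gradings. For the ``only if'' direction, given an isomorphism $\vphi\in\Aut(\cA)$ carrying ${}^\alpha\Gamma^j_\cA$ onto ${}^{\alpha'}\Gamma^j_\cA$, the fine grading $\vphi(\Gamma^j_\cA)$ has the same universal group $U(\Gamma^j_\cA)$ as $\Gamma^j_\cA$, and by Theorem \ref{th:Main} combined with Proposition \ref{maximal_diag_gs}, the two corresponding maximal diagonalizable subgroupschemes of $\AAut(\cA)$ are $\Aut(\cA)$-conjugate. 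Composing $\vphi$ with an element implementing this conjugation, we may assume $\vphi$ normalizes $\Diags(\Gamma^j_\cA)$, that is $\vphi\in\Aut(\Gamma^j_\cA)$; its class in the quotient $\W(\Gamma^j_\cA)=\Aut(\Gamma^j_\cA)/\Stab(\Gamma^j_\cA)$ yields the desired $w$. This adjustment—tracking how $\alpha$ transforms under conjugation—is the main technical obstacle, but it mirrors the reasoning in the proof of Proposition \ref{pr:isomorphism_Cartan}, with conjugacy of maximal quasi-tori of a fixed isomorphism type replacing the classical conjugacy of maximal tori.

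Finally, I would translate the orbit criterion into the explicit equivalences stated in the theorem by invoking Theorem \ref{th:Weyl_Albert} case by case. For $j=1$, $\W(\Gamma^1_\cA)=\Aut\Phi$ (with $\Phi$ of type $F_4$) acts on $U(\Gamma^1_\cA)=\ZZ^4$ as a subgroup of $GL_4(\ZZ)$, yielding directly the relation $\gamma\sim\gamma'$. For $j=2$, any element of $\W(\Gamma^2_\cA)$ (the stabilizer of $\ZZ_2^3$ inside $\Aut(\ZZ_2^2\times\ZZ_2^3)$) decomposes into a permutation $\pi\in\sg(3)\cong GL_2(\ZZ_2)$ of the three nonzero elements of $\ZZ_2^2$, an element of $GL_3(\ZZ_2)$ on $\ZZ_2^3$, and a shear $\ZZ_2^2\to\ZZ_2^3$; the latter two absorb into the freedom of choosing the identification $\ZZ_2^3\to H$, leaving only the criterion $H=H'$ together with $b'_i\equiv b_{\pi(i)}\pmod H$. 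For $j=3$, $\W(\Gamma^3_\cA)=\Aut(\ZZ\times\ZZ_2^3)$ acts by $\pm 1$ on $\ZZ$, by $GL_3(\ZZ_2)$ on $\ZZ_2^3$, and by shears $\ZZ\to\ZZ_2^3$, giving $H=H'$ and $g\sim g'$. For $j=4$ (characteristic different from $3$), $\W(\Gamma^4_\cA)$ has index $2$ in $\Aut(\ZZ_3^3)=GL_3(\FF_3)$, so the set of isomorphisms $\ZZ_3^3\to H$ splits into exactly two orbits, distinguished by the sign $\delta$ of the associator as in \cite[\S 4.5]{EK_Weyl}. Combining these computations with the first two stages produces the full classification stated in the theorem.
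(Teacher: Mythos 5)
Your first stage (reduction to the four families and their mutual exclusivity via the degree of $\cA_e$) follows the paper's route. One small omission: Proposition \ref{pr:degree3strong} gives its dichotomy only for a fixed grading with a fixed choice of $E_1,E_2,E_3$ in $\cA_e$, so by itself it does not rule out a single grading being isomorphic both to some $\Gamma^1_\cA(G,\gamma)$ and to some $\Gamma^2_\cA(G,H,\gamma')$; the paper closes this with a separate invariant (for a coarsening of the Cartan grading by an elementary $2$-group every component $\cA_g$, $g\ne e$, has even dimension, whereas the $\Gamma^2$-family has non-identity components of odd dimension).

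The serious gap is in your second stage. To make the ``only if'' direction work you must compose $\vphi$ with a conjugating element $\psi$ so that $\psi\vphi$ normalizes $\Diags(\Gamma^j_\cA)$ \emph{while still} carrying ${}^\alpha\Gamma^j_\cA$ to ${}^{\alpha'}\Gamma^j_\cA$; this forces $\psi\in\Stab({}^{\alpha'}\Gamma^j_\cA)$, i.e., you need the two maximal diagonalizable subgroupschemes $\Diags(\Gamma^j_\cA)$ and $\vphi\Diags(\Gamma^j_\cA)\vphi^{-1}$, both of which lie inside $\Stabs({}^{\alpha'}\Gamma^j_\cA)$, to be conjugate by an element of that stabilizer. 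For $j=1$ this is exactly Proposition \ref{pr:isomorphism_Cartan}, and it works because $\Diags(\Gamma^1_\cA)$ is an honest maximal torus and maximal tori of the (possibly disconnected) algebraic group $\Stab(\Gamma')$ are conjugate. For $j=2,3$ the subgroupschemes are quasi-tori with nontrivial finite part, and the ``conjugacy of maximal quasi-tori of a fixed isomorphism type'' you invoke is not a theorem — the very existence of several non-conjugate maximal diagonalizable subgroupschemes in $\AAut(\cA)$ shows that such statements fail in general, and nothing in the paper supplies the version relative to $\Stab({}^{\alpha'}\Gamma^j_\cA)$. The paper does not attempt this uniform argument: for $j=2$ and $j=3$ it proves the ``only if'' direction directly, showing that any isomorphism must respect the distinguished idempotent(s) (up to the cyclic symmetry in $\Aut(\Gamma^2_\cA)$, or exactly for the unique trace-one idempotent in the $j=3$ case) and then comparing the supports of the invariant subspaces $\FF E_2\oplus\FF E_3\oplus\iota_1(\cC)$ and $\iota_2(\cC)\oplus\iota_3(\cC)$, which yields $H=H'$ and the congruences modulo $H$; for $j=4$ the criterion is built into the definition. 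You would need to supply these case-by-case arguments, or an actual proof of the conjugacy statement inside the stabilizer, for your proof to be complete.
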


\begin{proof}
By Theorem \ref{th:Main}, we know that any $G$-grading $\Gamma: \cA=\bigoplus_{g\in G}\cA_g$ is isomorphic to ${}^\alpha\Gamma^j_\cA$ for some $j=1,2,3,4$ ($j\ne 4$ if $\chr{\FF}=3$) and a homomorphism $\alpha\colon U(\Gamma^j_\cA)\rightarrow G$. In the case $j=2$, if the restriction $\alpha|_{\ZZ_2^3}$ is not one-to-one, then Propositions \ref{pr:degree3strong} tells us that $\Gamma$ can also be induced from $\Gamma^1_\cA$ by a homomorphism $\ZZ^4\to G$. In the case $j=3$, if the restriction $\alpha|_{\ZZ_2^3}$ is not one-to-one or $1\in\ZZ$ is sent to an element of order $\leq 2$, then Propositions \ref{pr:degree2strong} implies that the degree of the algebra $\cA_e$ is $3$ and hence, by Propositions \ref{pr:degree3strong}, $\Gamma$ is isomorphic to a grading induced from $\Gamma^1_\cA$ or $\Gamma^2_\cA$. In the case $j=4$, if $\alpha$ is not one-to-one, then $\cA_e$ has degree $3$ and the same argument applies. We have shown that $\Gamma$ is isomorphic to a grading from our list.

Now, two gradings on our list that have different $j$'s cannot be isomorphic, because the degree of $\cA_e$ is $1$ for $j=4$, it is $2$ for $j=3$, and $3$ for $j=1,2$; in the latter case the gradings can be distinguished as follows: for any grading induced from $\Gamma^1_\cA$ by a homomorphism $\ZZ^4\to G$ where $G$ is an elementary $2$-group, every homogeneous component $\cA_g$, $g\ne e$, has even dimension, whereas the gradings $\Gamma^2_\cA(G,H,\gamma)$ possess homogeneous components of odd dimension other than $\cA_e$ (see their types in Proposition \ref{pr:degree3strong}).

It remains to consider isomorphisms between two gradings with the same $j$. The ``if'' part follows from Theorem \ref{th:Weyl_Albert}, which shows that one grading can be mapped to the other by an automorphism in $\Aut(\Gamma^j_\cA)$. The proof of the ``only if'' part will be divided into cases according to the value of $j$.

1) Since $\Gamma^1_\cA$ is the eigenspace decomposition relative to a $4$-dimensional torus in $\Aut(\cA)$, and the latter is the simple algebraic group of type $F_4$, this case is covered by Proposition \ref{pr:isomorphism_Cartan}.

2) Suppose $\vphi\in\Aut(\cA)$ sends $\Gamma=\Gamma^2_\cA(G,H,\gamma)$ to $\Gamma'=\Gamma^2_\cA(G,H',\gamma')$. Then, in particular, it maps $\cA_e$ to $\cA'_e$. If $b_i\in H$ for all $i$, then $\Supp\Gamma=H$ and hence $\Supp\Gamma'=H$, which forces $H'=H$ and $b'_i\in H$ for all $i$. Suppose that at least one of the $b_i$ is not in $H$. Then, in fact, at least two of them, say $b_2$ and $b_3$, are not in $H$. Hence $\cA_e$ is not simple --- precisely, $\FF E_1$ is a factor of $\cA_e$. Then $\FF\vphi(E_1)$ is a factor of $\cA'_e$ and hence the idempotent $\vphi(E_1)$ is one of $E_i$, $i=1,2,3$. The automorphism of $\cA$ defined by $E_i\mapsto E_{i+1}$, $\iota_i(x)\mapsto \iota_{i+1}(x)$, for all $x\in\cC$ and $i=1,2,3$, belongs to $\Aut(\Gamma^2_\cA)$, so we may assume without loss of generality that $\vphi(E_1)=E_1$. It follows that $\vphi$ leaves the subspace $\FF E_2\oplus\FF E_3\oplus\iota_1(\cC)$ invariant. The support of this subspace is, on the one hand, $b_1 H$ and, on the other hand, $b'_1 H'$. It follows that $H=H'$ and $b_1\equiv b'_1\pmod{H}$. Also, $\vphi$ leaves the subspace $\iota_2(\cC)\oplus\iota_3(\cC)$ invariant, and the support of this subspace is, on the one hand, $b_2 H\cup b_3 H$ and, on the other hand, $b'_2 H\cup b'_3 H$. It follows that $b_2\equiv b'_2\pmod{H}$ and $b_3\equiv b'_3\pmod{H}$, or $b_2\equiv b'_3\pmod{H}$ and $b_3\equiv b'_2\pmod{H}$.

3) Suppose $\vphi\in\Aut(\cA)$ sends $\Gamma^3_\cA(G,H,g)$ to $\Gamma^3_\cA(G,H',g')$. Since $E=E_1$ is the unique idempotent of trace $1$ in $\cA_e$ and in $\cA'_e$, we have $\vphi(E_1)=E_1$. Hence the subspaces $\FF E_2\oplus\FF E_3\oplus\iota_1(\cC)$ and $\iota_2(\cC)\oplus\iota_3(\cC)$ are invariant under $\vphi$. Looking at the supports, we get:
\[
H\cup\{g^{\pm 2}\} = H'\cup\{(g')^{\pm 2}\}\quad\mbox{and}\quad gH\cup g^{-1}H =g'H'\cup (g')^{-1}H'.
\]
The first condition shows that the intersection $H\cap H'$ has at least $6$ elements, and hence it generates both $H$ and $H'$. Therefore, $H=H'$. Now the second condition  gives that $g'\equiv g\pmod{H}$ or $g'\equiv g^{-1}\pmod{H}$.

4) This case is clear from the definition of $\Gamma^4_\cA(G,H,\delta)$.
\end{proof}

\begin{corollary}\label{co:Main}
Let $\cA$ be the Albert algebra over an algebraically closed field of characteristic different from $2$. Then any abelian group grading on $\cA$ is either induced from the Cartan grading or is equivalent to one of the following:
\begin{itemize}
\item a $\bZ_2^5$-grading of type $(24,0,1)$, a $\bZ_2^4$-grading of type $(7,8,0,1)$, or a  $\bZ_2^3$-grading of type $(0,0,7,0,0,1)$, if the degree of the neutral component is $3$;

\item a $\bZ\times\bZ_2^3$-grading of type $(25,1)$,
a $\bZ_4\times\bZ_2^3$-grading of type $(23,2)$, a $\bZ_3\times \bZ_2^3$-grading of type $(21,3)$, or a $\bZ_4\times\bZ_2^2$-grading of type $(6,9,1)$, if the degree of the neutral component is $2$;

\item a $\bZ_3^3$-grading of type $(27)$ if the degree of the neutral component is $1$ and the characteristic is not $3$.\qed
\end{itemize}
\end{corollary}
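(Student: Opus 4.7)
The plan is to simply assemble the case analysis already carried out in Subsections \ref{ss:Degree3}, \ref{ss:Degree2}, and \ref{ss:Degree1}, and invoke Corollary \ref{co:trace} to know that these are the only cases. First I would let $\Gamma\colon\cA=\bigoplus_{g\in G}\cA_g$ be an arbitrary abelian group grading and, by Corollary \ref{co:trace}, split according to whether the degree of the semisimple Jordan algebra $\cA_e$ is $1$, $2$, or $3$.

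For the degree $3$ case, Proposition \ref{pr:degree3strong} does almost all the work: either there is a homogeneous element $\iota_i(x)\ne 0$ with $n(x)=0$, in which case $\Gamma$ is induced from the Cartan grading, or otherwise $\Gamma$ is equivalent to one of three distinguished gradings. I would only need to note that those three gradings have universal groups and types precisely $\bZ_2^5$ with type $(24,0,1)$, $\bZ_2^4$ with type $(7,8,0,1)$, and $\bZ_2^3$ with type $(0,0,7,0,0,1)$, as recorded in that proposition. For the degree $2$ case, I invoke Proposition \ref{pr:degree2strong} directly: it produces exactly the four gradings with universal groups $\bZ\times\bZ_2^3$, $\bZ_4\times\bZ_2^3$, $\bZ_3\times\bZ_2^3$, and $\bZ_4\times\bZ_2^2$ and the indicated types.

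For the degree $1$ case, I would quote the argument in Subsection \ref{ss:Degree1}: any nonidentity element of $\Supp\Gamma$ must have order exactly $3$ (in particular forcing $\chr{\FF}\ne 3$), the support is a group isomorphic to $\bZ_3^3$, and the grading is equivalent to the $\bZ_3^3$-grading $\Gamma_\cA^4$ of type $(27)$ constructed in Subsection \ref{ss:Z33}. This exhausts the classification.

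There is no genuine obstacle here: every assertion of the corollary has already been proved in the course of proving Theorem \ref{th:Main}. The only thing to be careful about is that the statement of the corollary groups together all gradings ``induced from the Cartan grading'' without further distinguishing them, while the detailed enumeration in each degree-$r$ case relies on reading off the universal groups and types from the proof of the corresponding proposition rather than on new computations. Thus the proof is a short bookkeeping exercise that organizes Propositions \ref{pr:degree3strong}, \ref{pr:degree2strong}, and Subsection \ref{ss:Degree1} along the three possible degrees of $\cA_e$.
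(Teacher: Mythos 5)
Your proposal is correct and follows exactly the route the paper intends: the corollary is stated with no separate proof precisely because it is the assembly, via Corollary \ref{co:trace}, of Proposition \ref{pr:degree3strong} (degree $3$), Proposition \ref{pr:degree2strong} (degree $2$), and the analysis of \S\ref{ss:Degree1} (degree $1$), with the universal groups and types read off from those results. Nothing is missing.
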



\section{Gradings on $F_4$}\label{se:F4}

We continue to assume that the ground field $\FF$ is algebraically closed and $\chr{\FF}\ne 2$. The simple Lie algebra of type $F_4$ appears as the algebra of derivations of the Albert algebra. In order to describe it, consider first the local version of Definition \ref{df:relatedtriple}. Let $\cC$ be the Cayley algebra over $\bF$. Its \emph{triality Lie algebra} is defined as
\[
\tri(\cC)=\{(d_1,d_2,d_3)\in\frso(\cC,n)^3\;|\; d_1(x\bullet y)=d_2(x)\bullet y+x\bullet d_3(y)\ \forall x,y\in\cC\}.
\]
(Recall $x\bullet y=\bar x\bar y$ and $l_x(y)=r_y(x)=x\bullet y$.) As in Lemma \ref{le:relatedtriples}, if $(d_1,d_2,d_3)$ belongs to $\tri(\cC)$, so does $(d_3,d_1,d_2)$. The Lie bracket in $\tri(\cC)$ is the componentwise bracket, and we get the order $3$ automorphisms $\theta$ (triality automorphism):
\begin{equation}\label{eq:trialityauto}
\theta: (d_1,d_2,d_3)\mapsto (d_3,d_1,d_2).
\end{equation}
Each triple $(d_1,d_2,d_3)\in\tri(\cC)$ induces a derivation of the Albert algebra $\cA$:
\begin{equation}\label{eq:Dd1d2d3}
D_{(d_1,d_2,d_3)}: E_i\mapsto 0,\quad \iota_i(x)\mapsto \iota_i(d_i(x)),
\end{equation}
for any $i=1,2,3$ and $x\in \cC$. Also, for any $x\in\cC$ and $i=1,2,3$, consider the derivation $D_i(x)=2[L_{\iota_i(x)},L_{E_{i+1}}]$:
\begin{equation}\label{eq:Diotaix}
\begin{array}{lrll}
D_i(x):& E_i&\mapsto& 0,\quad E_{i+1}\;\mapsto\; \frac12\iota_i(x),\quad E_{i+2}\;\mapsto\; -\frac12 \iota_i(x),\\
 &\iota_i(y)&\mapsto& 2n(x,y)(-E_{i+1}+E_{i+2}),\\
 &\iota_{i+1}(y)&\mapsto& -\iota_{i+2}(x\bullet y),\\
 &\iota_{i+2}(y)&\mapsto& \iota_{i+1}(y\bullet x),
\end{array}
\end{equation}
for all $y\in\cC$. Then we get (\cite[Theorem IX.17]{Jacobson}):
\begin{equation}\label{eq:derA}
\Der(\cA)=D_{\tri(\cC)}\oplus\bigl(\bigoplus_{i=1}^3D_i(\cC)\bigr).
\end{equation}
One verifies at once the following properties (see \cite[\S 5.3]{ElduqueGrSym}):
\begin{equation}\label{eq:bracketF4}
\begin{split}
[D_{(d_1,d_2,d_3)},D_i(x)]&=D_i(d_i(x)),\\
[D_i(x),D_{i+1}(y)]&=D_{i+2}(x\bullet y),\\
[D_i(x),D_i(y)]&=2\theta^i(D_{x,y})
\end{split}
\end{equation}
for all $x,y\in\cC$, $(d_1,d_2,d_3)\in\tri(\cC)$ and $i=1,2,3$, where $\theta$ is the triality automorphism in \eqref{eq:trialityauto} and where $D_{x,y}=D_{t_{x,y}}$ with
\[
t_{x,y}=\bigl(\sigma_{x,y}, \frac12 n(x,y)\id-r_xl_y,\frac12 n(x,y)\id -l_xr_y\bigr)\in\tri(\cC),
\]
$\sigma_{x,y}(z)=n(x,z)y-n(y,z)x\in\frso(\cC,n)$. Moreover, the projection of $\tri(\cC)$ onto any of its components gives an isomorphism $\tri(\cC)\to \frso(\cC,n)$.

\smallskip

Take a ``good basis'' $\calB=\{e_1,e_2,u_1,u_2,u_3,v_1,v_2,v_3\}$ of $\cC$ and consider the subspace $\frh$ of $\frg=\Der(\cA)$ spanned by $D_{e_1,e_2}$ and $D_{u_i,v_i}$ for $i=1,2,3$. This is an abelian subalgebra of $\frg$. Actually, the image of $\frh$ in $\frso(\cC,n)$ under the projection of $\tri(\cC)$ onto its first component is the span of $\sigma_{e_1,e_2}$ and $\sigma_{u_i,v_i}$, $i=1,2,3$, so it is a Cartan subalgebra of $\frso(\cC,n)$.

Consider the linear maps $\epsilon_j\colon \frh\rightarrow \bF$, $j=0,1,2,3$, that constitute the dual basis to $D_{u_j,v_j}$, $j=0,1,2,3$, where $u_0\bydef e_1$ and $v_0\bydef e_2$.
Since we have:
\[
\begin{array}{rlll}
\sigma_{e_1,e_2}:& e_1\mapsto -e_1,& e_2\mapsto e_2,& u_i,v_i\mapsto 0,\\[4pt]
\sigma_{u_i,v_i}:& u_i\mapsto -u_i,& v_i\mapsto v_i,& e_1,e_2,u_j,v_j\mapsto 0\quad (j\ne i)\\[8pt]
\frac12\id -r_{e_1}l_{e_2}:& e_1\mapsto\frac12 e_1,& e_2\mapsto-\frac12 e_2,& u_i\mapsto -\frac12 u_i,\ v_i\mapsto \frac12 v_i,\\[8pt]
\frac12\id -r_{u_i}l_{v_i}:& e_1\mapsto\frac12 e_1,& e_2\mapsto-\frac12 e_2,& u_i\mapsto -\frac12 u_i,\ v_i\mapsto \frac12 v_i,\\[8pt]
& u_j\mapsto \frac12 u_j,& v_j\mapsto -\frac12 v_j& (j\ne i),\\[8pt]
\frac12\id -l_{e_1}r_{e_2}:& e_1\mapsto \frac12 e_1,& e_2\mapsto-\frac12 e_2,& u_i\mapsto \frac12 u_i,\ v_i\mapsto -\frac12 v_i,\\[8pt]
\frac12\id -l_{u_i}r_{v_i}:& e_1\mapsto-\frac12 e_1,& e_2\mapsto \frac12 e_2,& u_i\mapsto -\frac12 u_i,\ v_i\mapsto \frac12 v_i,\\[8pt]
& u_j\mapsto \frac12 u_j,& v_j\mapsto -\frac12 v_j& (j\ne i),
\end{array}
\]
we obtain that the weights of $\frh$ in $\iota_1(\cC)$, and hence the roots in $D_1(\cC)$, are $\pm\epsilon_j$, $j=0,1,2,3$, the weights in $\iota_2(\cC)$, and hence the roots in $D_2(\cC)$, are $\frac12(\pm\epsilon_0\pm\epsilon_1\pm\epsilon_2\pm\epsilon_3)$ with an even number of $+$ signs, and the weights in $\iota_3(\cC)$, and hence the roots in $D_3(\cC)$, are $\frac12(\pm\epsilon_0\pm\epsilon_1\pm\epsilon_2\pm\epsilon_3)$ with an odd number of $+$ signs. From $[\sigma_{a,b},\sigma_{x,y}]=\sigma_{\sigma_{a,b}(x),y}+\sigma_{x,\sigma_{a,b}(y)}$ for any $a,b,x,y\in\cC$ we obtain that the roots in $D_{\tri(\cC)}$ are $\pm\epsilon_r\pm\epsilon_s$, $0\leq r\ne s\leq 3$. Hence $\frh$ is a Cartan subalgebra of $\frg$ with the following set of roots:
\[
\Phi=\{\pm\epsilon_r\pm\epsilon_s\;|\; 0\leq r\ne s\leq 3\}\cup\{\pm\epsilon_r\;|\; 0\leq r\leq 3\}\cup\{\frac12(\pm\epsilon_0\pm\epsilon_1\pm\epsilon_2\pm\epsilon_3)\}.
\]
Note that the root spaces in $D_{\tri(\cC)}$ are the subspaces $\bF D_{u_i,u_j}$, $\bF D_{u_i,v_j}$ and $\bF D_{v_i,v_j}$ for $0\leq i\ne j\leq 3$, while in $D_i(\cC)$, $i=1,2,3$, the root spaces are the subspaces $\bF D_i(x)$ for $x\in\calB$. It follows at once that for any $\alpha\in\Phi$ and $X_\alpha\in\frg_\alpha$, the linear maps $X_\alpha^3$ on $\cA$, and $\ad_{X_\alpha}^3$ on $\frg$ are zero.

Consider the $\bZ^4$-grading on $\frg$ induced by the Cartan grading on $\cA$. Its homogeneous components are precisely the root spaces above, i.e., it is the Cartan decomposition of $\frg$ relative to $\frh$. We will call it the \emph{Cartan grading} on $\frg$.

\begin{proposition}\label{pr:innerderivations}
Let $\cA$ be the Albert algebra over an algebraically closed field of characteristic different from $2$ and let $\frg=\Der(\cA)$. Then any derivation of $\frg$ is inner.
\end{proposition}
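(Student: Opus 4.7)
The plan is to adapt the classical proof that every derivation of a complex semisimple Lie algebra is inner, carefully checking that each step works in characteristic different from $2$. Let $D\in\Der(\frg)$. A direct computation from the derivation property yields the key identity
\[
[\ad h,D]=-\ad{D(h)}\qquad(h\in\frh),
\]
where $\frh$ is the Cartan subalgebra exhibited above. Since every $\ad h$ with $h\in\frh$ is diagonalizable on $\frg$ (as is evident from the explicit root space decomposition), the commuting family of operators $[\ad h,\cdot\,]$ on $\End(\frg)$ can be simultaneously diagonalized. Because the commutator of two derivations is a derivation, this family stabilizes $\Der(\frg)$, which therefore admits a weight decomposition $\Der(\frg)=\bigoplus_{\gamma\in\frh^*}\Der(\frg)_\gamma$. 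It suffices to show each $\Der(\frg)_\gamma$ lies in $\inder(\frg)$.

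For a nonzero weight $\gamma$, pick $h\in\frh$ with $\gamma(h)\ne 0$. The identity above becomes $\gamma(h)D=-\ad{D(h)}$, whence $D=\ad y$ with $y=-\gamma(h)^{-1}D(h)\in\frg$, and $D$ is inner. For $\gamma=0$, the same identity yields $\ad{D(h)}=0$ for every $h\in\frh$; since the simple Lie algebra $\frg$ of type $F_4$ is centerless, this forces $D|_\frh=0$. Moreover, $D$ now commutes with $\ad\frh$ and so preserves each one-dimensional root space, giving $D(x_\alpha)=c_\alpha x_\alpha$ for suitable $c_\alpha\in\FF$. For each simple root $\alpha_i$, the equation $D([x_{\alpha_i},x_{-\alpha_i}])=0$ together with $h_{\alpha_i}=[x_{\alpha_i},x_{-\alpha_i}]\ne 0$ in $\frh$ forces $c_{-\alpha_i}=-c_{\alpha_i}$.

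Define $h'\in\frh$ by $\alpha_i(h')=c_{\alpha_i}$ for the simple roots $\alpha_i$. Then the derivations $D$ and $\ad{h'}$ agree on $\frh$ and on each root space $\frg_{\pm\alpha_i}$; since $\frg$ is generated as a Lie algebra by the subspaces $\frg_{\pm\alpha_i}$, we conclude $D=\ad{h'}$ on all of $\frg$.

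The main obstacle lies in the final generation statement in characteristic $3$: in a Chevalley basis for $F_4$ the structure constants $N_{\beta,\alpha_i}$ (for $\beta+\alpha_i$ a root) can equal $\pm 3$, and then vanish modulo $3$, breaking the naive inductive production of each $x_\alpha$ from the simple-root generators. To handle this, I would examine the root strings of $F_4$ and verify, case by case, that every positive root can still be reached from the simple roots by a chain of brackets whose structure constants are nonzero in $\FF$. This is a finite combinatorial check that completes the proof in all characteristics different from $2$.
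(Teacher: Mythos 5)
Your argument is essentially the paper's own proof: both decompose $\Der(\frg)$ into weight spaces for $\frh$ (the paper phrases this as the grading on $\Der(\frg)$ induced by the Cartan grading), dispose of the nonzero weights via $[\ad{H},D]=-\ad{D(H)}$, and reduce the zero-weight case to matching $D$ with some $\ad{H}$ on the root vectors $X_{\pm\alpha_i}$ and invoking that these generate $\frg$. The generation statement you defer to a ``finite combinatorial check'' is precisely what the paper reads off from the multiplication rules \eqref{eq:bracketF4}, and your worry about structure constants $\pm 3$ vanishing in characteristic $3$ is moot for $F_4$: its root strings have length at most $3$, so all Chevalley constants $N_{\alpha,\beta}$ are $\pm 1$ or $\pm 2$ and the usual induction on height goes through.
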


\begin{proof}
This is well-known for $\chr{\FF}\ne 2,3$ (see \cite{Seligman}). We include a proof that is valid also in characteristic $3$, where the Killing form is trivial. The Cartan  grading on $\frg$ induces a grading on $\Der(\frg)$. It suffices to consider homogeneous elements $D\in\Der(\frg)$. Suppose $a\in\bZ^4$ and $D\in\Der(\frg)_a$. If $\frg_a=0$, then $D(\frh)=D(\frg_0)=0$. If $a=0$, then the subspaces $\frg_0$ and $\frg_\alpha$, $\alpha\in\Phi$, are invariant under $D$ and hence $D(\frh)=0$ again. Finally, suppose $\frg_a=\bF X_\alpha$ for some $\alpha\in\Phi$. Then there is a linear map $\lambda\colon \frh\rightarrow \bF$ such that $D(H)=\lambda(H)X_\alpha$ for all $H\in\frh$. Hence for any $H_1,H_2\in\frh$, we have $0=D([H_1,H_2])=[D(H_1),H_2]+[H_1,D(H_2)]$, which gives $\lambda(H_1)\alpha(H_2)=\lambda(H_2)\alpha(H_1)$. Therefore, either $\lambda=0$ or the linear maps $\lambda$ and $\alpha$ have the same kernel, so $\lambda=\mu\alpha$ for some $\mu\in\bF$. Hence the derivation $D+\mu\ad_{X_\alpha}$ annihilates $\frh$. We have shown that $\Der(\frg)=\ad(\frg) + \{D\in\Der(\frg)\;|\; D(\frh)=0\}$. Now take a system $\Delta$ of simple roots. For instance,
\begin{equation}\label{eq:simpleroots}
\Delta=\{\alpha_1,\alpha_2,\alpha_3,\alpha_4\}
\end{equation}
where $\alpha_1=\frac12(\epsilon_0-\epsilon_1-\epsilon_2-\epsilon_3)$, $\alpha_2=\epsilon_3$, $\alpha_3=\epsilon_2-\epsilon_3$ and $\alpha_4=\epsilon_1-\epsilon_2$. Any derivation $D\in\Der(\frg)$ which annihilates $\frh$ preserves the root spaces, so there are scalars $\mu_i\in\bF$ such that $D(X_{\alpha_i})=\mu_iX_{\alpha_i}$, and hence $D(X_{-\alpha_i})=-\mu_iX_{-\alpha_i}$. Take $H\in\frh$ such that $\alpha_i(H)=\mu_i$ for $i=1,2,3,4$. The multiplication rules in \eqref{eq:bracketF4} show that the elements $X_{\pm\alpha_i}$, $i=1,2,3,4$, generate $\frg$. It follows that $D=\ad_H$, which completes the proof.
\end{proof}

\begin{proposition}\label{pr:AlbertAd}
Let $\cA$ be the Albert algebra over an algebraically closed field of characteristic different from $2$ and let $\frg=\Der(\cA)$.
Then the map $\Ad\colon\Aut(\cA)\rightarrow\Aut(\frg)$, $\varphi\mapsto (D\mapsto \varphi\circ D\circ\varphi^{-1})$, is a group isomorphism.
\end{proposition}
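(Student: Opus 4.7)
The plan is to verify in turn that $\Ad$ is well-defined, a group homomorphism, injective, and surjective. The first two parts are routine: for $\varphi\in\Aut(\cA)$, the conjugate $\varphi\circ D\circ\varphi^{-1}$ of a derivation is again a derivation, and $\Ad_{\varphi_1\varphi_2}=\Ad_{\varphi_1}\Ad_{\varphi_2}$ by definition.

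For injectivity, I would pick $\varphi\in\ker\Ad$, so that $\varphi$ commutes with every element of $\frg$ acting on $\cA$. Commuting with the Cartan subalgebra $\frh$ forces $\varphi$ to preserve the weight decomposition of $\cA$. The $24$ nonzero weight spaces are one-dimensional, so $\varphi$ acts on each as multiplication by a scalar $\lambda_\mu$. The zero weight space $\FF E_1\oplus\FF E_2\oplus\FF E_3$ is a subalgebra isomorphic to $\FF\times\FF\times\FF$, so $\varphi$ restricted to it permutes the primitive idempotents $E_1,E_2,E_3$ via some $\sigma\in\sg(3)$. Commuting $\varphi$ with the derivation $D_i(x)$ from \eqref{eq:Diotaix} and applying to $E_i$ (which $D_i(x)$ annihilates) forces $\sigma(i)=i$ for each $i$, so $\sigma=\id$. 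Applying the same commutation to $E_{i+1}$ with $D_i(x)(E_{i+1})=\frac{1}{2}\iota_i(x)$ yields $\lambda_\mu=1$ for the weight $\mu$ of $\iota_i(x)$; as $i$ and $x$ range this exhausts all nonzero weights, so $\varphi=\id$.

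For surjectivity, I would exploit the nilpotence fact noted above: for every $\alpha\in\Phi$ and $X_\alpha\in\frg_\alpha$, one has $X_\alpha^3=0$ on $\cA$ and $(\ad X_\alpha)^3=0$ on $\frg$. Since $\chr{\FF}\ne 2$, the truncated exponential $\exp X_\alpha=\id+X_\alpha+\frac{1}{2}X_\alpha^2$ is well-defined; the derivation property of $X_\alpha$ then makes $\exp X_\alpha$ an algebra automorphism of $\cA$, and a direct computation gives $\Ad_{\exp X_\alpha}=\exp(\ad X_\alpha)$. Thus the image of $\Ad$ contains the subgroup $\mathrm{Inn}(\frg)\subset\Aut(\frg)$ generated by these root-subgroup elements, and surjectivity follows from the fact that $\Aut(\frg)=\mathrm{Inn}(\frg)$ --- a consequence of the Dynkin diagram of $F_4$ admitting no nontrivial automorphism and of $F_4$ having no exceptional outer automorphism in characteristic different from $2$.

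The hard part is justifying this last equality in characteristic $3$: one must check that the modular simple Lie algebra of type $F_4$ in characteristic $3$ still satisfies $\Aut(\frg)=\mathrm{Inn}(\frg)$ and that $\mathrm{Inn}(\frg)$ is generated by the truncated root-subgroup exponentials. Unlike $G_2$ in characteristic $3$ or $F_4$ in characteristic $2$, no anomalies arise for $F_4$ in characteristic $3$, but the careful verification of these facts is the delicate step of the argument.
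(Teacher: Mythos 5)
Your overall strategy for surjectivity --- generate $\Aut(\frg)$ by the root elements $\exp(\mu\,\ad_{X_\alpha})$ and lift each one to $\exp(\mu X_\alpha)\in\Aut(\cA)$ --- is exactly the paper's, and your injectivity argument (weight decomposition of $\cA$ under $\frh$, then commutation with the derivations $D_i(x)$) is correct and in fact more explicit than the paper's one-line remark. The genuine gap is in characteristic $3$, which is precisely the case this proposition exists to cover. The two steps you call routine --- that $\exp X_\alpha=\id+X_\alpha+\frac12X_\alpha^2$ is an automorphism of $\cA$ ``by the derivation property,'' and that $\Ad{(\exp X_\alpha)}=\exp(\ad_{X_\alpha})$ ``by a direct computation'' --- both fail as stated when $\chr{\FF}=3$. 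The direct verification of the first reduces to the identities $d^2(x)d(y)+d(x)d^2(y)=0$ and $d^2(x)d^2(y)=0$, which one extracts from $d^3(xy)=3\bigl(d^2(x)d(y)+d(x)d^2(y)\bigr)=0$ and $d^4(xy)=6\,d^2(x)d^2(y)=0$; in characteristic $3$ these give no information, and indeed a derivation $\delta$ with $\delta^3=0$ need not have $\id+\delta+\frac12\delta^2$ multiplicative (take $\delta=\frac{d}{dt}$ on $\FF[t]/(t^3)$: then $(\id+\delta+\frac12\delta^2)(t^2)^2=(t+1)^4=t+1\ne 0$ although $t^2\cdot t^2=0$). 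The same division by $3$ is hidden in the identity $\exp(X)\,D\,\exp(-X)=\exp(\ad_X)(D)$. The paper circumvents this with a specialization argument: it builds a Chevalley basis of $\frg$ whose structure constants, together with those of the action on $\cA$, lie in $\bZ[\frac12]$, so that the automorphism property of $\exp(tX_\alpha)$ and the identity $\exp(t\,\ad_{X_\alpha})=\Ad{(\exp tX_\alpha)}$, valid over $\bC$, descend to characteristic $3$. Some argument of this kind is indispensable and is missing from your proposal.

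The remaining ingredient --- that $\Aut(\frg)$ is generated by the $\exp(\mu\,\ad_{X_\alpha})$ --- you correctly single out as the delicate point but leave unestablished. The paper does not prove it either; it cites \cite[4.1]{Steinberg}, which does cover type $F_4$ in characteristic $3$ (no diagram automorphism, and the special-isogeny anomaly for $F_4$ occurs only in characteristic $2$). With that citation and the specialization argument supplied, your proof would be complete; without them it only establishes the result for $\chr{\FF}\ne 2,3$, where it was already known.
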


\begin{proof}
Again, this is well-known for $\chr{\FF}\ne 2,3$ (see \cite[p.~71]{Seligman}). We include a proof that works also in characteristic $3$.
Since $\varphi\circ\ad_X\circ\varphi^{-1}=\ad_{\varphi(X)}$ for all $X\in\frg$, we see that $\Ad$ is one-to-one. The following argument will show that it is onto.

Consider the order $2$ automorphism of $\cC$ given by:
\begin{equation}\label{eq:sigma2C}
\sigma\colon  e_1\leftrightarrow e_2,\quad u_i\leftrightarrow v_i,\quad\mbox{for all}\quad i=1,2,3.
\end{equation}
This automorphism $\sigma$ extends to an order $2$ automorphism of $\cA$ by means of $\sigma(E_i)=E_i$, $\sigma(\iota_i(x))=\iota_i(\sigma(x))$, for all $i=1,2,3$ and $x\in\cC$, and hence it induces an order $2$ automorphism of $\frg$, which will be denoted by $\sigma$ as well. Note that the restriction of $\sigma$ to $\frh$ is $-\id$, and $\sigma$ takes any root space $\frg_\alpha$ to $\frg_{-\alpha}$.

Given $x,y,x',y'\in\cC$ with $n(x,x')=1=n(y,y')$ and $n(\bF x+\bF x',\bF y+\bF y')=0$, we get
\[
[[\sigma_{x,y},\sigma_{x',y'}],\sigma_{x,y}]
=[\sigma_{x,x'}+\sigma_{y,y'},\sigma_{x,y}]=-2\sigma_{x,y}.
\]
Hence, in particular, for $i\ne j$, we obtain: $[[D_{u_i,u_j},-\sigma(D_{u_i,u_j})],D_{u_i,u_j}]=2D_{u_i,u_j}$,
where, as before, $u_0=e_1$ and $v_0=e_2$. It follows that
\[
\{[D_{u_i,u_j},-\sigma(D_{u_i,u_j})],D_{u_i,u_j},-\sigma(D_{u_i,u_j})\}
\]
is an $\frsl_2$-triple in $\frg$, i.e., a triple $\{E,F,H\}$ satisfying $[H,E]=2E$, $[H,F]-2F$ and $[E,F]=H$, and thus spanning a subalgebra isomorphic to $\frsl_2(\bF)$. With the same arguments we get $\frsl_2$-triples starting with $D_{u_i,v_j}$ or $D_{v_i,v_j}$, $0\leq i\ne j\leq 3$. In a similar vein, for $x$ in the ``good basis'' $\calB$ of $\cC$:
\[
\begin{split}
[[D_i(x),D_i(\sigma(x))],D_i(x)]&=2[\theta^i(D_{x,\sigma(x)}),D_i(x)]\\
    &=2D_i\bigl(\sigma_{x,\sigma(x)}(x)\bigr)=-2D_i(x),
\end{split}
\]
so $\{[D_i(x),-\sigma(D_i(x))],D_i(x),-\sigma(D_i(x))\}$ is an $\frsl_2$-triple.

Take the system $\Delta$ of simple roots in \eqref{eq:simpleroots}, and the corresponding set of positive roots:
\[
\Phi^+=\{\epsilon_r,\epsilon_r\pm\epsilon_s,
\frac12(\epsilon_0\pm\epsilon_1\pm\epsilon_2\pm\epsilon_3)\;|\;0\leq r<s\leq 3\}.
\]
For each $\alpha\in\Phi^+$, choose the nonzero element $X_\alpha$ in the root spaces $\frg_\alpha$ to be of the form $D_{x,y}$ or $D_i(x)$ for some $x,y\in\calB$ and $i=1,2,3$. In particular,
\[
X_{\alpha_1}=D_3(e_1),\ X_{\alpha_2}=D_1(v_3),\ X_{\alpha_3}=D_{v_2,v_3},\ X_{\alpha_4}=D_{v_1,v_2}.
\]
Take $X_\alpha=-\sigma(X_{-\alpha})$ for $\alpha\in\Phi^-=-\Phi^+$.

With $H_i=[X_{\alpha_i},-\sigma(X_{\alpha_i})]$, the basis
\[
\cB_{Ch}=\{H_i,X_{\alpha}\;|\; 1\leq i\leq 4, \alpha\in\Phi\}
\]
is a Chevalley basis of $\frg$ (see \cite[proof of Proposition 25.2]{Humphreys}) whose structure constants lie in $\bZ$ if $\chr{\FF}=0$ and in the field $\bZ/p\bZ$ if $\chr{\bF}=p$. Moreover, the structure constants of the action of the elements $X_\alpha$ on $\cA$ are in $\frac12\bZ$ if $\chr{\bF}=0$ and in $\bZ/p\bZ$ if $\chr{\bF}=p$.

\smallskip

Let $\cA_\bC$ be the complex Albert algebra, so that $\frg_\bC=\Der(\cA_\bC)$ is the simple Lie algebra of type $F_4$ over $\bC$. Consider the ring $\bZ[\frac12]=\{\frac{a}{2^n}\;|\;a\in\bZ,\ n\in\bN\}$. In $\cA_\bC$, let $\cA_{\bZ[\frac12]}$ be the linear span of the basis $\{E_i,\iota_i(x)\;|\;i=1,2,3,\, x\in\calB\}$  over $\bZ[\frac12]$ and let $\frg_{\bZ[\frac12]}$ be the linear span of the basis $\cB_{Ch}$ over $\bZ[\frac12]$. Then our Albert algebra $\cA$ over $\bF$ is isomorphic to $\cA_{\bZ[\frac12]}\otimes_{\bZ[\frac12]}\bF$, and its Lie algebra of derivations $\frg=\Der(\cA)$ is isomorphic to $\frg_{\bZ[\frac12]}\otimes_{\bZ[\frac12]}\bF$.

According to Steinberg \cite[4.1]{Steinberg}, the automorphism group of $\frg$ is generated by the operators $\exp(\mu\,\ad_{X_\alpha})$, $\alpha\in\Phi$, $\mu\in\bF^\times$. These are indeed automorphisms, even in characteristic $3$, since they are obtained by specialization from the automorphism $\exp(t\,\ad_{X_\alpha})$ in $\frg_{\bZ[\frac12]}\otimes_{\bZ[\frac12]}\bZ[\frac12,t]$, which is a subalgebra of $\frg_{\bC}$ if we identify $t$ with a transcendental element in $\bC$. (Here we are using the same symbol $X_\alpha$ to denote an element in $\frg_\bC=\Der(\cA_\bC)$ and in $\frg=\Der(\cA)$, but this should cause no confusion.) Now,
\[
\exp(t\,\ad_{X_\alpha})(Y)=\exp (t X_{\alpha})Y\exp(-t X_{\alpha})
 =(\exp tX_{\alpha})Y(\exp tX_\alpha)^{-1},
\]
for all $Y\in\cA_\bC$, i.e., we have $\exp(t\,\ad_{X_\alpha})=\Ad{(\exp tX_\alpha)}$.

The operator $\exp\mu X_\alpha$ on $\cA$ is an automorphism of $\cA$, since it is obtained by specialization from an automorphism in $\cA_{\bZ[\frac12]}\otimes_{\bZ[\frac12]}\bZ[\frac12,t]$. We also have $\exp(\mu\,\ad_{X_{\alpha}})=\Ad{(\exp \mu X_\alpha)}$, which completes the proof.
\end{proof}

\begin{corollary}\label{co:AlbertAd}
Let $\cC$ be a Cayley algebra over a field $\bF$, $\chr{\FF}\ne 2$. Let $\cA=\sym_3(\cC)$ and $\frg=\Der(\cA)$.
Then $\Ad\colon\AAut(\cA)\rightarrow\AAut(\frg)$ is an isomorphism of affine group schemes.
\end{corollary}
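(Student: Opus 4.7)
The plan is to invoke the criterion stated at the end of Section \ref{se:gradings}: the morphism $\Ad$ is an isomorphism of affine group schemes provided that three conditions hold, namely (i) $\Ad_\bFalg$ is a bijection on $\bFalg$-points, (ii) $\ad\colon\frg\to\Der(\frg)$ is a bijection, and (iii) $\AAut(\cA)$ is smooth. Since the formation of $\AAut$ commutes with base change and a morphism of affine group schemes over $\bF$ is an isomorphism if and only if it becomes one after the faithfully flat base change to $\bFalg$, I may assume throughout that $\bF$ is algebraically closed.

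First I would observe that condition (i) is precisely the content of Proposition \ref{pr:AlbertAd}. For condition (ii), surjectivity of $\ad$ is Proposition \ref{pr:innerderivations}, and injectivity amounts to showing $\mathrm{Z}(\frg)=0$. This I would handle using the Cartan decomposition of $\frg$ relative to the $4$-dimensional toral subalgebra $\frh$ constructed just before Proposition \ref{pr:innerderivations}: a central element $z$ must satisfy $[H,z]=0$ for all $H\in\frh$, hence $z\in\frg_0=\frh$, and $\alpha(z)=0$ for every root $\alpha\in\Phi$; since $\epsilon_0,\epsilon_1,\epsilon_2,\epsilon_3$ form a basis of $\frh^*$, this forces $z=0$.

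Finally, smoothness of $\AAut(\cA)$ reduces to the dimensional equality $\dim\Aut(\cA)=\dim\Der(\cA)$, since $\Lie(\AAut(\cA))=\Der(\cA)$. The Chevalley basis $\cB_{Ch}$ produced in the proof of Proposition \ref{pr:AlbertAd} realizes $\frg=\Der(\cA)$ as a form of the split simple Lie algebra of type $F_4$, of dimension $52$ over any $\bF$ with $\chr{\bF}\ne 2$. On the other hand, the Steinberg-style argument of that same proof realizes $\Aut(\cA)$ as the algebraic group generated by the one-parameter subgroups $\{\exp(\mu X_\alpha)\}_{\mu\in\bF}$, $\alpha\in\Phi$, which is the connected simple algebraic group of type $F_4$, also of dimension $52$. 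I expect the main point of care to be verifying that this dimensional count, and hence the smoothness, remain valid in characteristic $3$; but this is built into the extensions of Propositions \ref{pr:innerderivations} and \ref{pr:AlbertAd} to characteristic $3$ that the authors have carried out, using the Chevalley basis with structure constants in the prime subfield. Combining (i), (ii), and (iii) via \cite[(22.5)]{KMRT} then yields that $\Ad\colon\AAut(\cA)\to\AAut(\frg)$ is an isomorphism of affine group schemes.
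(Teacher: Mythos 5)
Your proof is correct and follows essentially the same route as the paper's: reduce to the algebraically closed case, verify smoothness via $\dim\Aut(\cA)=52=\dim\Der(\cA)$, and invoke Propositions \ref{pr:AlbertAd} and \ref{pr:innerderivations} together with \cite[(22.5)]{KMRT}. The only differences are cosmetic: you supply the (easy) injectivity of $\ad$ via triviality of the center, which the paper leaves implicit, and you derive the dimension of $\Aut(\cA)$ from the Chevalley--Steinberg construction rather than citing \cite[(25.13)]{KMRT}.
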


\begin{proof}
Let $\bFalg$ be the algebraic closure of $\bF$. Since $\Der(\frg)\ot\bFalg=\Der_\bFalg(\frg\ot\bFalg)$, $\frg\ot\bFalg=\Der_\bFalg(\cA\ot\bFalg)$, and $\cA\ot\bFalg=\sym_3(\cC\ot\bFalg)$, we may pass from $\FF$ to $\bFalg$ and thus assume that $\FF$ is algebraically closed. Then $\Aut(\cA)$ is the simple algebraic group of type $F_4$ (see \cite[(25.13)]{KMRT} and the references therein) and hence
\[
\dim\Aut(\cA)=52=\dim\Der(\cA),
\]
which means that $\AAut(\cA)$ is smooth. Now, the maps $\Ad_\FF\colon\Aut(\cA)\to\Aut(\frg)$ and $\ad\colon\frg\to\Der(\frg)$ are both bijective, by Propositions \ref{pr:AlbertAd} and \ref{pr:innerderivations}, respectively. The result follows.
\end{proof}

Now Theorems \ref{transfer} and \ref{transfer_fine} yield the following result:

\begin{theorem}\label{th:transfer_F4}
Let $\cA$ be the Albert algebra over an algebraically closed field $\bF$, $\chr{\FF}\ne 2$. Then the abelian group gradings on $\Der(\cA)$ are those induced by such gradings on $\cA$. The algebras $\cA$ and $\Der(\cA)$ have the same classification of fine gradings up to equivalence and, for any abelian group $G$, the same classification of $G$-gradings up to isomorphism.\qed
\end{theorem}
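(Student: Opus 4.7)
The plan is to deduce Theorem \ref{th:transfer_F4} as a direct application of the group-scheme machinery already developed, using Corollary \ref{co:AlbertAd} as the sole nontrivial input. By Corollary \ref{co:AlbertAd}, the morphism $\Ad\colon\AAut(\cA)\to\AAut(\frg)$ is an isomorphism of affine group schemes. This is the crux: once we have this, every statement of the theorem is essentially formal.

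First I would set up the correspondence. Any $G$-grading $\Gamma$ on $\cA$ corresponds to a morphism $\eta_\Gamma\colon G^D\to\AAut(\cA)$ by Proposition \ref{duality_char_any}, and composing with $\Ad$ yields the morphism $\eta_{\Ad(\Gamma)}=\Ad\circ\eta_\Gamma\colon G^D\to\AAut(\frg)$, which classifies the induced $G$-grading $\Ad(\Gamma)$ on $\Der(\cA)$ described explicitly in the final subsection of Section \ref{se:gradings}. Since $\Ad$ is an isomorphism, the assignment $\Gamma\mapsto\Ad(\Gamma)$ is a bijection between the set of $G$-gradings on $\cA$ and the set of $G$-gradings on $\frg$, with inverse obtained by composing with $\Ad^{-1}$. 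This proves the first assertion: every abelian group grading on $\Der(\cA)$ is induced from a (unique) such grading on $\cA$.

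Next, for the isomorphism classification of $G$-gradings I would invoke Theorem \ref{transfer}: if $\Gamma$ and $\Gamma'$ are isomorphic $G$-gradings on $\cA$, then $\Ad(\Gamma)$ and $\Ad(\Gamma')$ are isomorphic. Conversely, if $\Ad(\Gamma)$ and $\Ad(\Gamma')$ are isomorphic via some $\psi\in\Aut(\frg)$, then by Proposition \ref{pr:AlbertAd} there exists a unique $\vphi\in\Aut(\cA)$ with $\psi=\Ad_\vphi$, and the commutative diagram
\[
\xymatrix{
{\AAut(\cA)}\ar[r]^\theta\ar[d]_{\Ad_\vphi} & {\AAut(\frg)}\ar[d]^{\Ad_\psi}\\
{\AAut(\cA)}\ar[r]^\theta & \AAut(\frg)
}
\]
from the proof of Theorem \ref{transfer} (with $\theta=\Ad$), combined with the injectivity of $\Ad$ on morphisms, shows that $\vphi$ transforms $\Gamma$ into $\Gamma'$. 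Thus the classifications of $G$-gradings up to isomorphism on $\cA$ and on $\frg$ coincide.

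Finally, for the fine grading classification up to equivalence, I would apply Theorem \ref{transfer_fine} directly: since $\Ad$ is an isomorphism of group schemes, a grading $\Gamma$ on $\cA$ realized over its universal group is fine if and only if $\Ad(\Gamma)$ is fine, and two such fine gradings are equivalent if and only if their images are. Combined with the bijection $\Gamma\mapsto\Ad(\Gamma)$ established above, this gives the equivalence of the classifications of fine gradings. I expect no real obstacle here: all the substantive work — smoothness of $\AAut(\cA)$, bijectivity of $\Ad_\FF$, and every derivation of $\frg$ being inner — has been absorbed into Corollary \ref{co:AlbertAd}, so the proof is a one-line invocation of the transfer theorems.
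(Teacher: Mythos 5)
Your proposal is correct and follows the paper's own route exactly: the paper likewise derives Theorem \ref{th:transfer_F4} as an immediate consequence of Corollary \ref{co:AlbertAd} together with Theorems \ref{transfer} and \ref{transfer_fine} and the discussion of $\Ad$ in Section \ref{se:gradings}. Your explicit unpacking of the bijection $\Gamma\mapsto\Ad(\Gamma)$ and of the converse direction for isomorphisms is just a spelled-out version of what the paper leaves implicit.
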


\begin{corollary}\label{co:finegradingsF4}
We use the notation of Theorems \ref{th:transfer_F4} and \ref{th:Main}. Then, up to equivalence, the fine abelian group gradings on the simple Lie algebra $\frg=\Der(\cA)$, their universal groups and types are the following:
\begin{itemize}
\item The Cartan grading $\Gamma_\frg^1$ induced by $\Gamma^1_\cA$; universal group $\bZ^4$; type $(48,0,0,1)$.

\item The grading $\Gamma_\frg^2$ induced by $\Gamma_\cA^2$; universal group $\bZ_2^5$; type $(24,0,0,7)$.

\item The grading $\Gamma_\frg^3$ induced by $\Gamma_\cA^3$; universal group $\bZ\times \bZ_2^3$; type $(31,0,7)$.

\item The grading $\Gamma_\frg^4$ induced by $\Gamma_\cA^4$; universal group $\bZ_3^3$; type $(0,26)$ --- this one exists only if $\chr{\FF}\ne 3$.
\end{itemize}
\end{corollary}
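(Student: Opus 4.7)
The plan is to invoke Theorem \ref{th:transfer_F4}, which combined with Corollary \ref{co:AlbertAd} asserts that $\Ad\colon\AAut(\cA)\to\AAut(\frg)$ is an isomorphism of affine group schemes and hence transfers the classification of fine abelian group gradings, together with their universal groups and equivalence classes, from $\cA$ to $\frg$. By Theorem \ref{th:Main}, the fine abelian group gradings on $\cA$ are exactly $\Gamma^1_\cA$, $\Gamma^2_\cA$, $\Gamma^3_\cA$, and (when $\chr{\FF}\ne 3$) $\Gamma^4_\cA$, with universal groups $\ZZ^4$, $\ZZ_2^5$, $\ZZ\times\ZZ_2^3$, and $\ZZ_3^3$ respectively. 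Hence $\Gamma^j_\frg=\Ad(\Gamma^j_\cA)$ furnishes a complete list of fine gradings on $\frg$ with the same universal groups, and all that remains is the computation of the four types.

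The type of $\Gamma^1_\frg$ is immediate from the paragraphs preceding Proposition \ref{pr:innerderivations}: it is the Cartan decomposition of $\frg$ relative to $\frh$, with the $4$-dimensional Cartan subalgebra as neutral component and $48$ one-dimensional root spaces. For $\Gamma^2_\frg$, I would use the decomposition \eqref{eq:derA}, $\frg=D_{\tri(\cC)}\oplus\bigoplus_{i=1}^{3}D_i(\cC)$. The formulas \eqref{eq:Dd1d2d3} and \eqref{eq:Diotaix}, read together with the $\ZZ_2^5$-grading recipe of \S\ref{ss:Z25}, place each $D_i(x)$ (for $x$ homogeneous in the Cayley algebra) in a distinct one-dimensional component with nonzero $\ZZ_2^2$-part, contributing $3\cdot 8=24$ one-dimensional pieces. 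Since the projection $\tri(\cC)\to\frso(\cC,n)$ onto any factor is a graded Lie isomorphism, $D_{\tri(\cC)}$ inherits a $\ZZ_2^3$-grading from the Cayley algebra; a routine count of the spans of $\sigma_{x,y}$-pairs with $\deg x+\deg y=c$, using that $\cC$ has eight one-dimensional graded pieces, gives dimension $4$ at each of the seven nonzero $c$ and $0$ at $c=0$. Type $(24,0,0,7)$.

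For $\Gamma^3_\frg$ the $\ZZ$-factor of the universal group produces a $5$-grading $\frg=\bigoplus_{k=-2}^{2}\frg_k$. By Remark \ref{re:StabEiiota1}, $\Stab_{\Aut\cA}(E,S^+,S^-)\cong\Spin(\cC_0,n)$ preserves the $\ZZ$-grading, and together with the grading derivation $\partial$ its Lie algebra accounts for $\frg_0=\FF\partial\oplus\frso(\cC_0,n)$, of dimension $22$; this is the Levi of a type-$B_3$ parabolic of $F_4$. Standard $\mathfrak{sl}_2$-triple theory (using the grading derivation as the neutral element) combined with the $\Spin(7)$-module structure on $\cA_{\pm 1}$ and $\cA_{\pm 2}$ then gives $\dim\frg_{\pm 1}=8$ (spin representation) and $\dim\frg_{\pm 2}=7$ (standard $7$-dimensional representation). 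Refining by $\ZZ_2^3$: the same $\sigma_{x,y}$-counting applied to the seven one-dimensional pieces of $\cC_0$ decomposes $\frso(\cC_0,n)$ into $7$ three-dimensional components; and $\frg_{\pm 1}\cong\cC$, $\frg_{\pm 2}\cong\cC_0$ split as $\ZZ_2^3$-graded spaces into $8$ and $7$ one-dimensional pieces, respectively. Together with the one-dimensional $\FF\partial$ this gives $1+2\cdot 8+2\cdot 7=31$ one-dimensional and $7$ three-dimensional components, type $(31,0,7)$.

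Finally, for $\Gamma^4_\frg$ in characteristic different from $3$, the associated diagonalizable subgroup scheme is $\bmu_3^3$, whose Lie algebra vanishes in this characteristic; equivalently, any degree-$e$ derivation $d$ would act by $d(X_g)=\lambda(g)X_g$ on each one-dimensional $\cA_g$, and Leibniz would force $\lambda\colon\ZZ_3^3\to(\FF,+)$ to be a group homomorphism, which must be trivial when $\chr{\FF}\ne 3$. So $\frg_e=0$. By Theorem \ref{th:Weyl_Albert}, $\W(\Gamma^4_\cA)$ is the commutator subgroup of $\Aut(\ZZ_3^3)=\GL_3(\FF_3)$, i.e.\ $\SL_3(\FF_3)$, which acts transitively on $\ZZ_3^3\setminus\{0\}$; hence every nonidentity component of $\frg$ has the common dimension $52/26=2$, yielding type $(0,26)$. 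The main obstacle I anticipate is the $8+7$ split in the $\Gamma^3_\frg$ case: whereas $\dim\frg_0=22$ drops out directly from Remark \ref{re:StabEiiota1}, distinguishing $\dim\frg_{\pm 2}=7$ from the alternative $\dim\frg_{\pm 2}=0$ requires either producing an explicit degree-$2$ derivation to launch the $\mathfrak{sl}_2$-argument, or invoking directly the classification of $5$-gradings of $F_4$ corresponding to the $B_3$-type parabolic.
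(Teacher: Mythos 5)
Your top-level strategy is the same as the paper's: Theorem \ref{th:transfer_F4} reduces everything to the list of Theorem \ref{th:Main} together with a computation of the four types. Your counts for $\Gamma^1_\frg$ and $\Gamma^2_\frg$ are correct (the paper dismisses these as straightforward; your $\sigma_{x,y}$-count inside $D_{\tri(\cC)}$ is a clean way to get the seven $4$-dimensional components). Your treatment of $\Gamma^4_\frg$ is a genuinely different and attractive route: once $\frg_e=0$ is known, the transitivity of $\W(\Gamma^4_\cA)$ (the commutator subgroup of $\Aut(\ZZ_3^3)$) on $\ZZ_3^3\setminus\{0\}$ forces all $26$ remaining components to share the dimension $52/26=2$. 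Two caveats there: the word ``equivalently'' is wrong, since $\frg_e$ is the \emph{centralizer} of $\bmu_3^3$ in $\frg$, not $\Lie(\bmu_3^3)$, so the vanishing of the latter proves nothing by itself; and your Leibniz argument tacitly assumes the products $X_gX_h$ are nonzero. It is safer to run it on cubes: for $0\ne X\in\cA_g$, $g\ne e$, Section \ref{ss:Degree1} shows $X^3=N(X)1\ne 0$, and $d(X^3)=3\lambda(g)X^3$ then forces $\lambda(g)=0$ when $\chr{\FF}\ne 3$.

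The genuine gap is in $\Gamma^3_\frg$, exactly where you flag it. Remark \ref{re:StabEiiota1} gives only the inclusion $\frso(\cC_0,n)\subseteq\frg_0$, i.e.\ a lower bound, so $\dim\frg_0=22$ does not ``drop out directly''; and neither ``standard $\mathfrak{sl}_2$-triple theory'' nor a classification of $5$-gradings attached to a $B_3$-parabolic is cited or carried out, so the nonvanishing of $\frg_{\pm2}$ (and the exact values $8$ and $7$, and $\frg_{\pm3}=0$) is never established. The paper closes precisely this hole by using that $\frg=[L_\cA,L_\cA]$, so that $\frg_k=\sum_{r+s=k}[L_{\cA_r},L_{\cA_s}]$ for the $5$-grading \eqref{eq:ZGrading}: then $[L_{S^\pm},L_{\nu_\pm(\cC)}]=0$ gives $\frg_{\pm3}=0$, while the explicit evaluations $[L_E,L_{\nu_\pm(x)}](S^\mp)=\tfrac12\nu_\pm(x)$ and $[L_{S^\pm},L_{\nu(a)}](\nu_\pm(1))=-2\nu_\pm(a)$ exhibit subspaces of dimensions $8$ and $7$ in $\frg_{\pm1}$ and $\frg_{\pm2}$; together with $\frso(\cC_0,n)$ and the grading derivation $[L_{S^+},L_{S^-}]$ in $\frg_0$ this accounts for $22+2\cdot 8+2\cdot 7=52$ dimensions, forcing equality everywhere and yielding type $(31,0,7)$. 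Until you produce such an explicit nonzero element of $\frg_{2}$ (your ``explicit degree-$2$ derivation''), your type computation for $\Gamma^3_\frg$ is not a proof.
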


\begin{proof}
Only the type of these gradings has to be checked and this is straightforward. The most difficult case is for the $\bZ\times\bZ_2^3$-grading. Since $\frg=[L_\cA,L_\cA]$ (see \cite[Corollary IX.11]{Jacobson}), we obtain: $\frg=\frg_{-3}\oplus \frg_{-2}\oplus\frg_{-1}\oplus\frg_0\oplus\frg_1\oplus\frg_2\oplus\frg_3$ where  $\frg_n=\sum_{r+s=n}[L_{\cA_r},L_{\cA_s}]$ and $\cA_r$ as in \eqref{eq:ZGrading}. But $[L_{S^\pm},L_{\nu_\pm(\cC)}]=0$, so $\frg_{\pm 3}=0$. The local version of Remark \ref{re:StabEiiota1} shows that $\frg_0$ contains a subalgebra isomorphic to $\frso(\cC_0,n)$. Also, $[L_E,L_{\cA_{\pm 1}}]=[L_E,L_{\nu_\pm(\cC)}]$ is an $8$-dimensional subspace of $\frg_{\pm 1}$, since $[L_E,L_{\nu_\pm(x)}](S^\mp)=\frac12 \nu_{\pm}(x)$, and $[L_{S^\pm},L_{\nu(\cC_0)}]$ is a $7$-dimensional subspace of $\frg_{\pm 2}$, since $[L_{S^\pm},L_{\nu(a)}](\nu_{\pm}(1))=-2\nu_{\pm}(a)$. It follows that $\dim\frg_0=22$, $\dim\frg_{\pm 1}=8$ and $\dim\frg_{\pm 2}=7$ (actually, $\frg_{\pm 1}=[L_E,L_{\nu_{\pm}(\cC)}]$ and $\frg_{\pm 2}=[L_{S^\pm},L_{\nu(\cC_0)}]$). Hence the type of the $\bZ\times\bZ_2^3$-grading, which is obtained by refining the $\bZ$-grading on $\frg$ above using the $\bZ_2^3$-grading on $\cC$, is $(31,0,7)$, where the seven $3$-dimensional homogeneous components are in $\frso(\cC_0,n)$, which is contained in $\frg_0$.
\end{proof}

Let $\Gamma^1_\frg(G,\gamma)$, $\Gamma^2_\frg(G,H,\gamma)$, $\Gamma^3_\frg(G,H,g)$, and $\Gamma^4_\frg(G,H,\delta)$ be the $G$-gradings induced by $\Gamma^j_\frg$, $j=1,2,3,4$, respectively, in the same way as for $\Gamma^j_\cA$ (see Theorem \ref{th:Main_iso}).

\begin{corollary}\label{co:gradings_F4_iso}
Let $\frg$ be the simple Lie algebra of type $F_4$ over an algebraically closed field $\FF$, $\chr{\FF}\ne 2$. Let $G$ be an abelian group. Then any $G$-grading on $\cA$ is isomorphic to some $\Gamma^1_\frg(G,\gamma)$, $\Gamma^2_\frg(G,H,\gamma)$, $\Gamma^3_\frg(G,H,g)$ or $\Gamma_\frg^4(G,H,\delta)$ (characteristic $\ne 3$ in this latter case), but not two from this list. Also,
\begin{itemize}
\item $\Gamma^1_\frg(G,\gamma)$ is isomorphic to $\Gamma^1_\frg(G,\gamma')$ if and only if $\gamma\sim\gamma'$;

\item $\Gamma^2_\frg(G,H,\gamma)$ is isomorphic to $\Gamma^2_\frg(G,H',\gamma')$ if and only if $H=H'$ and $\gamma\sim\gamma'$;

\item $\Gamma^3_\frg(G,H,g)$ is isomorphic to $\Gamma^3_\frg(G,H',g')$ if and only if $H=H'$ and $g\sim g'$;

\item $\Gamma^4_\frg(G,H,\delta)$ is isomorphic to $\Gamma^4_\frg(G,H',\delta')$ if and only if $H=H'$ and $\delta=\delta'$.\qed
\end{itemize}
\end{corollary}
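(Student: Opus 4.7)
The plan is to deduce this corollary directly from the Albert algebra classification (Theorem \ref{th:Main_iso}) by transferring via the adjoint representation. The heavy lifting has already been done: Corollary \ref{co:AlbertAd} establishes that $\Ad\colon\AAut(\cA)\to\AAut(\frg)$ is an isomorphism of affine group schemes, and Theorem \ref{th:transfer_F4} then asserts that for any abelian group $G$ the map $\Gamma\mapsto\Ad(\Gamma)$ induces a bijection between $G$-gradings on $\cA$ and $G$-gradings on $\frg$, and that this bijection preserves (and reflects) isomorphism of $G$-gradings. So the first step is to unpack what isomorphism-preservation means here: by Proposition \ref{duality_char_any}, $G$-gradings on an algebra $\cB$ correspond to morphisms $G^D\to\AAut(\cB)$, and two are isomorphic iff these morphisms are conjugate by an $\FF$-point of $\AAut(\cB)$. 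Since $\Ad$ is an isomorphism of group schemes (in particular a bijection on both morphisms from $G^D$ and on $\FF$-points), it carries conjugacy classes bijectively, giving a bijection of isomorphism classes of $G$-gradings.

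Next, by definition $\Gamma^j_\frg(G,*) = \Ad(\Gamma^j_\cA(G,*))$ in each of the four parameterizations $j=1,2,3,4$ (with $j=4$ only in characteristic $\ne 3$). Combining this with the isomorphism-preservation from the previous paragraph, one obtains:
\begin{itemize}
\item every $G$-grading on $\frg$ is of the form $\Ad(\Gamma)$ for some $G$-grading $\Gamma$ on $\cA$, which by Theorem \ref{th:Main_iso} is isomorphic to some $\Gamma^j_\cA(G,*)$, hence the $G$-grading on $\frg$ is isomorphic to the corresponding $\Gamma^j_\frg(G,*)$;
\item two gradings $\Gamma^j_\frg(G,*)$ and $\Gamma^{j'}_\frg(G,*)$ are isomorphic iff the pre-images $\Gamma^j_\cA(G,*)$ and $\Gamma^{j'}_\cA(G,*)$ are isomorphic.
\end{itemize}
Applying Theorem \ref{th:Main_iso} to the second bullet yields immediately each of the four bulleted equivalences in the statement of the corollary, as well as the fact that gradings with different $j$ are non-isomorphic.

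The only mild subtlety will be to confirm that the distinguishing invariants used in Theorem \ref{th:Main_iso} (the degree of $\cA_e$, the parity of dimensions of homogeneous components, the subgroup $H$, etc.) continue to have meaning at the Lie algebra level. This is automatic from the fact that $\Ad$ intertwines the two $G^D$-representations, so the support of $\Ad(\Gamma)$ equals the support of $\Gamma$ (as subsets of $G$), and the diagonalizable subgroupschemes $\Diags(\Gamma)$ and $\Diags(\Ad(\Gamma))$ coincide under $\Ad$; thus $U(\Ad(\Gamma))=U(\Gamma)$ and the parameter data $(\gamma)$, $(H,\gamma)$, $(H,g)$, or $(H,\delta)$ transfer verbatim. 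No further argument is required; there is no serious obstacle, as the essential work went into Propositions \ref{pr:innerderivations} and \ref{pr:AlbertAd} and Corollary \ref{co:AlbertAd}, which were used to establish Theorem \ref{th:transfer_F4} in the first place.
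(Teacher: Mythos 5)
Your proposal is correct and is exactly the route the paper takes: the corollary is stated with a \qed precisely because it is the combination of Theorem \ref{th:transfer_F4} (itself resting on Corollary \ref{co:AlbertAd} and Theorems \ref{transfer}, \ref{transfer_fine}) with the Albert algebra classification in Theorem \ref{th:Main_iso}, together with the observation that $\Gamma^j_\frg(G,\ast)=\Ad(\Gamma^j_\cA(G,\ast))$ since $\Ad({}^\alpha\Gamma)={}^\alpha(\Ad(\Gamma))$. Your unpacking of why the isomorphism of group schemes gives a bijection on isomorphism classes of $G$-gradings (conjugacy of morphisms $G^D\to\AAut(\cdot)$) is just the content the paper leaves implicit.
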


\begin{corollary}\label{co:MainF4}
Using the notation of Corollary \ref{co:gradings_F4_iso}, any abelian group grading on $\frg$ is either induced from the Cartan grading or equivalent to one of the following:
\begin{itemize}
\item a $\bZ_2^5$-grading of type $(24,0,7)$, a $\bZ_2^4$-grading of type $(1,8,0,0,7)$, or a  $\bZ_2^3$-grading of type $(0,0,1,0,0,0,7)$;

\item a $\bZ\times\bZ_2^3$-grading of type $(31,0,7)$, a $\bZ_8\times\bZ_2^2$-grading of type $(19,6,7)$, a $\bZ_4\times\bZ_2^3$-grading of type $(17,7,7)$, a $\bZ_3\times \bZ_2^3$-grading of type $(3,14,7)$, or a $\bZ_4\times\bZ_2^2$-grading of type $(0,8,2,0,6)$;

\item a $\bZ_3^3$-grading of type $(0,26)$ if $\chr{\FF}\ne 3$.
\end{itemize}
\end{corollary}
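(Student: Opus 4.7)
The plan is to combine Theorem~\ref{th:transfer_F4} with Corollary~\ref{co:Main}. By the former, every abelian group grading on $\frg=\Der(\cA)$ is, up to equivalence, induced from a unique abelian group grading on $\cA$, and the universal groups coincide. Thus the classification list with its universal groups transfers verbatim from $\cA$ to $\frg$, and what remains is to compute the type of each induced grading on $\frg$.

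For this computation I would use the decomposition \eqref{eq:derA}, namely
\[
\frg=D_{\tri(\cC)}\oplus D_1(\cC)\oplus D_2(\cC)\oplus D_3(\cC),
\]
together with the bracket relations \eqref{eq:bracketF4} and the fact that the projection of $\tri(\cC)$ onto any of its three components is an isomorphism with $\frso(\cC,n)$. For any grading of $\cA$ fixing $E_1,E_2,E_3$, each $\iota_i(\cC)\cong\cC$ is graded by the grading group, so each $D_i(\cC)$ inherits the same grading as $\cC$ (shifted by $\deg\iota_i(1)$ in the coarser gradings), and $\tri(\cC)$ receives the grading coming from the action of $\sigma_{x,y}$ on $\cC$. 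The $\bZ_3^3$-case, where the $E_i$ are cyclically permuted, is handled analogously using the Okubo-algebra description of Section~\ref{ss:Z33}.

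For the degree-$3$ gradings (the three refinements from $\Gamma_\cA^2$), the key observation is that the $\bZ_2^3$-grading on $\cC$ underlying $\Gamma_\cA^2$ induces the grading on $\frso(\cC,n)=\lspan{\sigma_{x,y}}$ whose identity component vanishes (since the $\cC$-grading has one-dimensional homogeneous components of distinct degrees) and whose seven non-identity components each have dimension $4$. Combining this with the three one-dimensional $D_i(\cC)$-layers at degrees $(1,0,h)$, $(0,1,h)$, $(1,1,h)$ in $\bZ_2^2\times\bZ_2^3$ yields the fine type on $\frg$; the two coarsenings are obtained by summing dimensions of the components identified under the respective epimorphisms $\bZ_2^5\twoheadrightarrow\bZ_2^4$ and $\bZ_2^5\twoheadrightarrow\bZ_2^3$ from Proposition~\ref{pr:degree3strong}. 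For the degree-$2$ gradings I would first use the $\bZ$-grading on $\frg$ induced by \eqref{eq:ZGrading}, whose component dimensions $(7,8,22,8,7)$ were computed in the proof of Corollary~\ref{co:finegradingsF4}, refine each piece by the $\bZ_2^3$-grading coming from $\cC$ to recover the fine type $(31,0,7)$ of the $\bZ\times\bZ_2^3$-grading on $\frg$, and then push forward along the coarsening epimorphisms listed in Proposition~\ref{pr:degree2strong}. Finally, the $\bZ_3^3$-case is immediate: since $\cA_e=\FF\cdot 1$ and the universal group is $\bZ_3^3$, one gets $\frg_e=0$, so the $52$ dimensions of $\frg$ distribute over the $26$ non-identity elements of $\bZ_3^3$ with two dimensions each, giving type $(0,26)$.

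The main technical obstacle is the bookkeeping for the degree-$2$ coarsenings: one has to track carefully, for each epimorphism $\bZ\times\bZ_2^3\to G$ arising in Proposition~\ref{pr:degree2strong}, precisely which of the $38$ non-zero homogeneous components of the fine grading on $\frg$ coalesce, and verify that the resulting types match those listed. The remainder of the proof is essentially a finite dimension-counting exercise governed by \eqref{eq:derA} and \eqref{eq:bracketF4}.
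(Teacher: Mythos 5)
There is a genuine gap, and it sits exactly where you declare the problem to be mere bookkeeping. Your opening claim that ``the classification list with its universal groups transfers verbatim from $\cA$ to $\frg$'' is false: Theorem \ref{th:transfer_F4} transfers the classification of $G$-gradings \emph{up to isomorphism} and of \emph{fine} gradings up to equivalence, but not the classification of arbitrary gradings up to equivalence. A $G$-grading on $\cA$ and the induced $G$-grading on $\frg$ can have different patterns of coalescence of the fine homogeneous components, hence different universal groups. Concretely, in the degree-$2$ situation the relevant supports are $\Supp\cA_{\pm 2}=\{g^{\pm 2}\}$ (singletons), whereas $\Supp\frg_{\pm 2}=g^{\pm 2}(H\setminus\{e\})$ is a seven-element set, because $\frg_{\pm 2}=[L_{S^\pm},L_{\nu(\cC_0)}]$ carries the $\bZ_2^3$-grading of $\cC_0$. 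Consequently the condition $g^4\in H\setminus\{e\}$ (with $g^2,g^3\notin H$), which produces \emph{no} coalescence on $\cA$ --- the grading there remains equivalent to the fine $\bZ\times\bZ_2^3$-grading and therefore does not appear as a separate item in Proposition \ref{pr:degree2strong} --- \emph{does} force six pairs of one-dimensional components of $\frg_{2}$ and $\frg_{-2}$ to merge. This is precisely the $\bZ_8\times\bZ_2^2$-grading of type $(19,6,7)$ in the statement. Your plan of ``pushing forward along the coarsening epimorphisms listed in Proposition \ref{pr:degree2strong}'' examines only those epimorphisms that already produce proper coarsenings on $\cA$, so it yields four degree-$2$ equivalence classes instead of five and cannot establish the corollary as stated.

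The repair is what the paper actually does: start from the isomorphism classification of Corollary \ref{co:gradings_F4_iso}, and for \emph{every} admissible parameter $(G,H,g)$ --- not merely those that distinguish equivalence classes on $\cA$ --- recompute the supports of the pieces of the induced $\bZ$-grading on $\frg$ and determine directly when they overlap; the new overlap condition $g^4\in H\setminus\{e\}$ then emerges on its own. The rest of your outline (the use of \eqref{eq:derA} and \eqref{eq:bracketF4}, the dimension counts in the degree-$3$ and $\bZ_3^3$ cases, where it happens that no new coalescences arise) is sound and consistent with the paper's computation, but the same caution should in principle be exercised there as well before asserting that the list of equivalence classes is unchanged.
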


\begin{proof}
Consider, for example, the gradings $\Gamma=\Gamma^2_\frg(G,H,g)$ and the corresponding grading on $\cA$. The homogeneous components of the $5$-grading in \eqref{eq:ZGrading} have supports $\Supp\cA_{\pm 2}=\{g^{\pm 2}\}$, $\Supp\cA_{\pm 1}=g^{\pm 1}H$, $\Supp\cA_0=H$. Hence $\Gamma$ has the following supports in each of the components of the $\ZZ$-grading $\frg=\bigoplus_{r=-2}^2\frg_r$:  $\Supp\frg_{\pm 2}=g^{\pm 2}(H\setminus \{e\})$ (as $\frg_{\pm 2}=[L_E,L_{\nu(\cC_0)}]$), $\Supp\frg_{\pm 1}=g^{\pm 1}H$, and $\Supp\frg_0=H$. If these subsets are disjoint, then $\Gamma$ is equivalent to the fine $\bZ\times\bZ_2^3$-grading. Otherwise we have several possibilities where some homogeneous components of this fine grading coalesce as in the arguments preceding Proposition \ref{pr:degree2strong}, plus a new possibility where $g^4\in H\setminus\{e\}$ and hence $\Supp\Gamma$ is a group isomorphic to $\bZ_8\times\bZ_2^2$.
With combinatorial arguments of this kind, one completes the proof.
\end{proof}


\end{document}